\documentclass{amsart}

\setlength{\textwidth}{6in}
\setlength{\oddsidemargin}{.25in}
\setlength{\topmargin}{-.50in}
\setlength{\textheight}{8.75in}

\usepackage{amsmath,amscd}
\usepackage{amssymb}
\usepackage{amsfonts}
\usepackage{amsthm}
\usepackage{verbatim}

 \numberwithin{equation}{section}
\pagenumbering{arabic}

\newcommand{\nc}{\newcommand}
\nc{\nt}{\newtheorem}
\nc{\dmo}{\DeclareMathOperator}
\nc{\enm}{\ensuremath}

\newtheorem{thm}{Theorem}
\newtheorem{conj}{Conjecture}
\newtheorem{prop}{Proposition}
\newtheorem{lemma}{Lemma}
\nt{defn}{Definition}
\newtheorem{cor}{Corollary}
\nt{assumption}{Assumption}

\dmo{\Ind}{Ind} \dmo{\cInd}{c-Ind} \dmo{\Adj}{Ad} \dmo{\PGL}{PGL} \dmo{\SO}{SO} \dmo{\Lie}{Lie} \dmo{\Int}{Int} \dmo{\reg}{reg}
\dmo{\sing}{sing} \dmo{\supp}{supp} \dmo{\tr}{tr} \dmo{\Sym}{Sym} \dmo{\Hom}{Hom} \dmo{\Tor}{Tor} \dmo{\Out}{Out} \dmo{\Ht}{ht} \dmo{\End}{End}
\dmo{\Mat}{Mat} \dmo{\Tr}{Tr} \dmo{\tmp}{temp}

\dmo{\SL}{SL} \dmo{\sgn}{sgn} \dmo{\GL}{GL}

\dmo{\GSp}{GSp} \dmo{\gsp}{gsp}
\dmo{\disc}{disc}
 \dmo{\simp}{sc}
\dmo{\alg}{alg}
 \dmo{\Hol}{Hol}
 \dmo{\largess}{Large}
 
 \dmo{\Sp}{Sp}
\dmo{\kc}{kc}  \dmo{\ac}{ac}
\dmo{\ck}{ck}
\dmo{\syp}{sp}
\dmo{\REG}{reg}
 \DeclareMathOperator{\Res}{Res} \dmo{\Mod}{mod} \dmo{\geo}{geo} \dmo{\re}{Re} \dmo{\Spec}{Spec}
\dmo{\Fr}{Fr} \dmo{\vol}{vol} \dmo{\Sets}{Sets} \dmo{\im}{im} \dmo{\diag}{diag} \dmo{\Ker}{Ker} \dmo{\val}{val} \dmo{\ord}{ord}
\dmo{\Stab}{Stab} \dmo{\Ad}{Ad} \dmo{\rank}{rank} \dmo{\Symp}{Sp} \dmo{\Nm}{Nm} \dmo{\Norm}{Norm}
\dmo{\Top}{top}
\nc{\bG}{\enm{{\mathbf G}}}

\nc{\Aff}{\mathbb{A}}

\nc{\eps}{\varepsilon}
 \nc{\isom}{\stackrel{\sim}{\to}}
\nc{\ups}{g_\theta}

\nc{\bks}{\enm{{\backslash}}}

 \nc{\Z}{\enm{{\mathbb Z}}}

\nc{\Zp}{\enm{{\mathbb Z_p}}}

\nc{\Gm}{\enm{{\mathbb G_m}}}
\nc{\F}{\enm{{\mathbb F}}}
\nc{\Fp}{\enm{{\mathbb F}_p}}
\nc{\Fq}{\enm{{\mathbb F}_q}}
\nc{\Q}{\enm{{\mathbb Q}}}
\nc{\Qp}{\enm{{\mathbb Q_p}}}
\nc{\R}{\enm{{\mathbb R}}}
\nc{\N}{\enm{{\mathbb N}}}
\nc{\C}{\enm{{\mathbb C}}}
\nc{\CC}{\enm{{\mathcal C}}}
\nc{\half}{\enm{{\frac{1}{2}}}}
\nc{\BB}{\enm{{\mathcal B}}}
\nc{\flip}{\tilde{\eps}}
\nc{\funE}{\mathcal E}
\nc{\ii}{\enm{{\mathcal I}}}
\nc{\jj}{\enm{{\mathcal J}}}
\nc{\OO}{\enm{{\mathcal O}}}
\nc{\f}{\enm{{\mathcal F}}}

\nc{\GGl}{\enm{{\mathfrak gl}}}
\nc{\GG}{\enm{{\mathfrak g}}}
\nc{\gd}{\enm{{\hat{\mathfrak g}}}}
\nc{\gm}{\enm{{\gamma}}}
\nc{\hh}{\enm{{\mathfrak h}}}

\nc{\II}{\enm{{\mathfrak a}}}
\nc{\LL}{\enm{{\mathfrak l}}}
\nc{\mm}{\enm{{\mathfrak m}}}
\nc{\pp}{\enm{{\mathfrak p}}}
\nc{\TT}{\enm{{\mathfrak t}}}
\nc{\Nc}{\enm{{\mathcal N}}}
\nc{\Cc}{\enm{{\mathcal C}}}
\nc{\MM}{\enm{{\mathcal M}}}

\nc{\iso}{\tilde{\rightarrow}}
\dmo{\pr}{pr}
\dmo{\der}{der} \dmo{\ad}{ad}
\dmo{\cok}{cok}
\dmo{\Gal}{Gal}
\dmo{\cont}{cont}
\dmo{\Fib}{Fib}

\nc{\Acok}{A_{\cok}}

\nc{\Aker}{A_{\ker}}
\nc{\Tcok}{T_{\cok}}

\nc{\Tker}{T_{\ker}}

\nc{\res}{res}

 \nc{\gmt}{\tilde{\gm}}
\nc{\vtil}{\tilde{v}}

 \nc{\Gd}{\enm{{\hat{G}}}}

  \nc{\Hd}{\enm{{\hat{H}}}}

\nc{\vt}{\enm{\vartheta}}
\nc{\lra}{\enm{\longrightarrow}}
\nc{\ra}{\enm{\rightarrow}}
\nc{\lip}{\enm{\langle}}
\nc{\rip}{\enm{\rangle}}
\nc{\Kot}{\mathcal{K}}
\nc{\bsk}{\bigskip}

 \nc{\ol}{\overline}

\nc{\tchi}{\chi'}

\author{Steven Spallone}

\begin{document}

\title   {Stable Trace Formulas and Discrete Series Multiplicities }
\maketitle
 
\begin{abstract}
Let $G$ be a reductive algebraic group over $\Q$, and suppose that $\Gamma \subset G(\R)$ is an arithmetic subgroup defined by congruence conditions.   A basic problem in arithmetic is to determine the multiplicities of discrete series representations in $L^2(\Gamma \bks G(\R))$, and in general to determine the traces of Hecke operators on these spaces.  In this paper we give a conjectural formula for the traces of Hecke operators, in terms of stable distributions.  It is based on a stable version of Arthur's formula for $L^2$-Lefschetz numbers \cite{A-L^2}, which is due to Kottwitz \cite{SVAF}. We reduce this formula to the computation of elliptic p-adic orbital integrals and the theory of endoscopic transfer.   As evidence for this conjecture, we demonstrate the agreement of the central terms of this formula with the unipotent contributions to the multiplicity coming from Selberg's trace formula in Wakatsuki \cite{Wak Mult}, in the case $G=\GSp_4$ and $\Gamma=\GSp_4(\Z)$.   
 
 \end{abstract}

\section{Introduction}
 
 Let $G$ be a reductive algebraic group over $\Q$, and $\Gamma$  an arithmetic subgroup of $G(\R)$ defined by congruence conditions.  Then $G(\R)$ acts on $L^2(\Gamma \bks G(\R))$ via right translation; let us write $R$ for this representation.   A fundamental problem in arithmetic is to understand $R$.   
As a first step, we may decompose $R$ as
\begin{equation*}
R= R_{\disc} \oplus R_{\cont},
\end{equation*}
where $R_{\disc}$ is a direct sum of irreducible representations, and $R_{\cont}$ decomposes continuously.  The continuous part may be understood inductively through Levi subgroups of $G$ as in \cite{Langlands c}.  We are left with the study of $R_{\disc}$.  Given an irreducible representation $\pi$ of $G(\R)$, write $R_{\disc}(\pi)$ for the $\pi$-isotypic subspace of $R_{\disc}$.  Then
\begin{equation*}
R_{\disc}(\pi) \cong \pi^{\oplus m_{\disc}(\pi)}
\end{equation*}
for some integer $m_{\disc}(\pi)$.  (We may also write $m_{\disc}(\pi,\Gamma)$.)  A basic problem is to compute these integers.  

There is more structure than simply these dimensions, however.  Arithmetic provides us with a multitude of Hecke operators $h$ on  $L^2(\Gamma \bks G(\R))$ which commute with $R$.  Write $R_{\disc}(\pi,h)$ for the restriction of $h$ to  $R_{\disc}(\pi)$.  The general problem is to find a formula for the trace of $R_{\disc}(\pi,h)$.

We focus on discrete series representations $\pi$.  These are representations which behave like representations of compact or finite groups, in the sense that their associated matrix coefficients are square integrable.  Like other smooth representations, they have a theory of characters developed by Harish-Chandra.  They separate naturally into finite sets called $L$-packets.  For an irreducible finite-dimensional representation $E$ of $G(\C)$, there is a corresponding $L$-packet $\Pi_E$ of discrete series representations, consisting of those with the same infinitesimal and central characters as $E$.  Say that a discrete series representation is regular if $\pi \in \Pi_E$, with the highest weight of $E$ regular.

We follow the tradition of computing $\tr R_{\disc}(\pi,h)$ through trace formulas.  This method has gone through several incarnations, beginning in a paper \cite{Selberg} of Selberg's for $\GL_2$, in which he also investigated the continuous Eisenstein series.  A goal was to compute dimensions of spaces of modular forms, and traces of Hecke operators on these spaces.  These spaces of modular forms correspond to the spaces $R_{\disc}(\pi)$ we are discussing in this case.  His trace formula is an integral, over the quotient of the upper half space $X$ by $\Gamma$, of a sum of functions $H_{\gm}$, one for each element of $\Gamma$.  Let us write it roughly as

\begin{equation*}
\dim_{\C} S(\Gamma)= \int_{\Gamma \bks X} \sum_{\gm \in \Gamma} H_{\gm}(Z) dZ,
\end{equation*}
for some space $S(\Gamma)$ of cusp forms with a suitable $\Gamma$-invariance condition.

Here $dZ$ is a $G(\R)$-invariant measure on $X$.  When the quotient $\Gamma \bks X$ is compact, the sum and integral may be interchanged, leading to a simple expression for the dimensions in terms of orbital integrals.  The interference of the Eisenstein series precludes this approach in the noncompact quotient case.  Here there are several convergence difficulties, which Selberg overcomes by employing a truncation process.  Unfortunately the truncation process leads to notoriously complicated expressions which are far from being in closed form.  This study of $R_{\disc}(\pi)$ has been expanded to other reductive groups using what is called the Arthur-Selberg trace formula.
(See Arthur \cite{Arthur Clay}.)

Generally, a trace formula is an equality of distributions on $G(\R)$, or on the adelic group $G(\Aff)$.  One distribution is called the geometric side; it is a sum of terms corresponding to conjugacy classes of $G$.  Given a test function $f$, the formula is essentially made up of combinations $I_M(\gm,f)$ of weighted integrals of $f$ over the conjugacy classes of elements $\gm$.  (Here $M$ is a Levi subgroup of $G$.)  Another distribution is called the spectral side, involving the Harish-Chandra transforms $\tr \pi(f)$ for various representations $\pi$.   Here, the operator $\pi(f)$ is given by weighting the representation $\pi$ by $f$.  The geometric and spectral sides agree, and in applications we can learn much about the latter from the former.  Some of the art is in picking test functions to extract information about both sides.

The best general result using the trace formula to study $\tr R_{\disc}(\pi,h)$ seems to be in Arthur \cite{A-L^2}.  He produces a formula for
\begin{equation} \label{stableR}
\sum_{\pi \in \Pi}  \tr R_{\disc}(\pi,h),
\end{equation}
where $\Pi$ is a given discrete series $L$-packet for $G(\R)$.   He uses test functions $f$ which he  calls ``stable cuspidal''.   Their Fourier transforms $\pi \mapsto \tr \pi(f)$ are ``stable'' in the sense that they are constant on $L$-packets, and ``cuspidal" in the sense that, considered as a function defined on tempered representations, they are supported on discrete series.  (Tempered representations are those which appear in the Plancherel formula for $G(\R)$.)  Using his invariant trace formula (Arthur \cite{ITF1}, \cite{ITF2}), he obtains (\ref{stableR}) as the spectral side.  The geometric side is a combination of  orbital integrals for $h$ and values of Arthur's $\Phi$-function, which describes the asymptotic values of discrete series characters averaged over an $L$-packet.
 
In particular, he produces a formula for
\begin{equation} \label{stablemult}
\sum_{\pi \in \Pi} m_{\disc}(\pi),
\end{equation}
for a $L$-packet $\Pi$ of (suitably regular) discrete series representations.
 
In the case of $G=\GL_2$, there is a discrete series representation $\pi_k$ for each integer $k \geq 1$.  In this case $m_{\disc}(\pi_k)$ is the dimension of the space $S_k(\Gamma)$ of $\Gamma$-cusp forms of weight $k$ on the upper half plane.  Restriction to $\SL_2(\R)$ gives two discrete series $\{ \pi_k^+, \pi_k^- \}$ in each $L$-packet.  However we may still use Arthur's formula here since $m_{\disc}(\pi_k^+, \Gamma)=m_{\disc}(\pi_k^-, \Gamma)$ for every arithmetic subgroup $\Gamma$.  (Endoscopy does not play a role.)
    
For the group $\GSp_4(\R)$ there are two discrete series representations in each $L$-packet: one ``holomorphic" and one ``large" discrete series.  Let $\pi$ be a holomorphic discrete series, and write $\pi'$ for the large discrete series representation in the same $L$-packet as $\pi$.  The multiplicity $m_{\disc}(\pi,\Gamma)$ is also the dimension of a certain space of vector-valued Siegel cusp forms  (see \cite{Wallach}) on the Siegel upper half space, an analogue of the usual cusp forms on the upper half plane.  For $\Gamma=\Sp_4(\Z)$, the dimensions of these spaces of cusp forms were calculated in Tsushima \cite{Tsu 1}, \cite{Tsu 2}  using the Riemann-Roch-Hirzebruch formula, and later in Wakatsuki \cite{Wak Dim} using the Selberg trace formula and the theory of prehomogeneous vector spaces.   Wakatsuki then evaluated Arthur's formula in \cite{Wak Mult} to compute 
\begin{equation*}
m_{\disc}(\pi,\Gamma)+m_{\disc}(\pi',\Gamma),
\end{equation*}
thereby deducing a formula for $m_{\disc}(\pi',\Gamma)$.
 
A natural approach to isolating the individual $m_{\disc}(\pi)$, or generally the individual $\tr R_{\disc}(\pi,h)$, is to apply a trace formula to a matrix coefficient, or more properly, a pseudocoefficient $f$.   This means that $f$ is a test function whose Fourier transform picks out $\pi$ rather than the entire packet $\Pi $ containing $\pi$.   (See Definition \ref{pseudoscience} below.)  Such a function will not be stable cuspidal, but merely cuspidal.  Arthur (\cite{A-L^2}, see also \cite{Arthur Clay}) showed that $I_M(\gm,f)$ vanishes when $f$ is stable cuspidal and the unipotent part of $\gm$ is nontrivial.  If we examine the geometric side of Arthur's formula for a pseudocoefficient $f$, 
we must evaluate the more complicated terms $I_M(\gm,f)$ for elements $\gm$ with nontrivial unipotent part.  
At the time of this writing, such calculations have not been made in general; we take another approach.
 
Distinguishing the individual representations $\pi$ from others in its $L$-packet leads to the theory of endoscopy, and stable trace formulas.  The grouping of representations $\pi$ into packets $\Pi$ on the spectral side mirrors the fusion of conjugacy classes which occurs when one extends the group $G(\R)$ to the larger group $G(\C)$.  If $F$ is a local or global field, then a stable conjugacy class in $G(F)$ is, roughly, the union of classes which become conjugate in $G(\ol{F})$.  (See \cite{Langlands defn} for a precise definition.) 

The distribution which takes a test function to its integral over a regular semisimple stable conjugacy class is a basic example of a stable distribution.  Indeed, a stable distribution is defined to be a closure of the span of such distributions (see \cite{Langlands debuts}, \cite{Langlands defn}).  A distribution on $G(F)$ is stabilized if it can be written as a sum of stable distributions, the sum being over smaller subgroups $H$ related to $G$.  These groups $H$ are called endoscopic groups for $G$; they are tethered to $G$ not as subgroups but through their Langlands dual groups.  As part of a series of techniques called endoscopy, one writes unstable distributions on $G$ as combinations of stable distributions on the groups $H$.  Part of this process is the theory of transfer, associating suitable test functions $f^H$ on $H(F)$ to test functions $f$ on $G(F)$ which yield a matching of orbital integrals.  Indeed this was the drive for \cite{Ngo}.
As the name suggests, the theory of endoscopy, while laborious, leads to an intimate understanding of $G$.

There has been much work in stabilizing Arthur's formula: See for example \cite{Langlands debuts}, \cite{A-STF1}, \cite{A-STF2}, \cite{A-STF3}. In Kottwitz's preprint \cite{SVAF}, he defines a stable version of Arthur's Lefschetz formula, which we review below.  (See also \cite{Morel}.) It is a combination
\begin{equation*}
\Kot(f)=\sum_H \iota(G,H) ST_g(f^H)
\end{equation*}
of distributions $f \mapsto f^H \mapsto ST_g(f^H)$ over endoscopic groups $H$ for $G$.  Here the distributions $ST_g$, defined for each $H$, are stable.  (See Section  \ref{Various Invariants} for the definition of the rational numbers $\iota(G,H)$.)  Each $ST_g$ is a sum of terms corresponding to stable conjugacy classes of elliptic elements $\gm \in H(\Q)$.  The main result of \cite{SVAF} is that $\Kot$ agrees with Arthur's distribution, at least for functions $f$ which are stable cuspidal at the real place.   

As part of the author's thesis \cite{Sp}, he evaluated the identity terms of Kottwitz's distribution for the group $G=\SO_5$ at a function $f$ which was a pseudocoefficient for a discrete series representation at the real place.  Later, Wakatsuki noted that the resulting expressions match up with the terms in his multiplicity formulas $m_{\disc}(\pi, \Gamma)$ and $m_{\disc}(\pi',\Gamma)$ corresponding to unipotent elements.  Moreover, the contribution in \cite{Sp} from the endoscopic group accounted for the difference in these multiplicity formulas, while the stable part corresponded to the sum.  After further investigation, we conjecture simply that Kottwitz's distribution evaluated at a function $f=f_{\pi,\Gamma}$ suitably adapted to $\pi$ and $\Gamma$ is equal to $m_{\disc}(\pi,\Gamma)$, under a regularity condition on $\pi$.  (See Section \ref{Conjecture} for the precise statement.)   Of course this is compatible with Arthur's results in \cite{A-L^2}.  
 
In this paper we give some computational evidence for this conjecture.  We also reduce the computation of each $ST(f_{\pi,\Gamma}^H)$ to evaluating elliptic orbital $p$-adic integrals for the transfer $f^{\infty H}$ at the finite places.  The rest breaks naturally into a problem at the real points and a global volume computation. 

The main ingredient at the archimedean place is Arthur's $\Phi$-function $\Phi_M(\gm, \Theta^E)$, which we review.  This quantity gives the contribution from the real place to the trace formulas in \cite{A-L^2} and \cite{GKM}.  It also plays a prominent role in Kottwitz's formula. This function, originally defined by the asymptotic behaviour of a stable character near a singular elliptic element $\gm$, was expressed in closed form by the author in \cite{SpalPhi}.    

There are two volume-related constants that enter into any explicit computation of $ST_g$.  The first is $\ol{v}(G)$, which is essentially the volume of an inner form of $G$ over $\R$.  It depends on the choice of local measure $dg_{\infty}$. The second comes about from orbital integrals at the finite ideles, and depends on the choice of local measure $dg_f$.   These integrals may frequently be written in terms of the volumes of open compact subgroups $K_f$ of $G(\Aff_f)$.  In practice, one is left computing expressions such as $\ol{v}(G)^{-1} \vol_{dg_f}(K_f)^{-1}$, which are independent of the choice of local measures.  More specifically, we define
\begin{equation*}
\chi_{K_f}(G)=\ol{v}(G)^{-1} \vol_{dg_f}(K_f)^{-1} \tau(G) d(G).
\end{equation*}
Here $\tau(G)$ is the Tamagawa number of $G$ and $d(G)$ is the index of the real Weyl group in the complex Weyl group.
 A main general result of this paper, Theorem \ref{chi-theorem}, interprets $\chi_{K_f}(G)$ via Euler characteristics of arithmetic subgroups.  It extends a computation of Harder \cite{Harder}, which was for semisimple simply connected groups, to the case of reductive groups, under some mild hypotheses on $G$.

We work out two examples in this paper, one for $\SL_2$ and another for $\GSp_4$.   It is easy to verify our conjecture for $G=\SL_2$ and $\Gamma=\SL_2(\Z)$ using the classic dimension formula for cusp forms.  In this case endoscopy does not appear.   The calculations for $\GSp_4$ are more complex; we content ourselves with working out the central terms of Kottwitz's formula.  

If $\pi$ is a holomorphic discrete series representation of $\GSp_4(\R)$, write $H_{1}^{\pi}$ for the central-unipotent terms of the Selberg trace formula, as evaluated in \cite{Wak Mult} to compute $m_{\disc}(\pi,\Gamma)$.  Here $\Gamma=\GSp_4(\Z)$.  If $\pi$ is a large discrete series representation, write $H_{1}^{\pi}$ for the central-unipotent terms in \cite{Wak Mult} contributing to $m_{\disc}(\pi,\Gamma)$.  In both cases, write $f=f_{\pi,\Gamma}=f_{\infty} f^{\infty}$, with $f_{\infty}$ a pseudocoefficient for $\pi$, and $f^{\infty}$ the (normalized) characteristic function of the integer adelic points of $G$.  Write $\Kot(f, \pm 1)$ for the central terms of Kottwitz's formula applied to $f$.

As evidence for our conjecture,  we show:  

\begin{thm} \label{Theorem}
For each regular discrete series representation $\pi$ of $G(\R)$ we have 
\begin{equation*}
\Kot(f_{\pi,\Gamma}, \pm 1)=H_1^{\pi}.
\end{equation*}
\end{thm}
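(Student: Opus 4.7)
My plan is to evaluate both sides of the claimed identity in closed form and match them term by term. By the definition of Kottwitz's distribution,
\begin{equation*}
\Kot(f_{\pi,\Gamma}, \pm 1) \;=\; \sum_{H} \iota(G,H)\, ST_g(f^H_{\pi,\Gamma};\,\pm 1),
\end{equation*}
where $ST_g(\,\cdot\,;\pm 1)$ denotes the restriction of $ST_g$ to the stable classes in $H(\Q)$ that map to the central element $\pm 1 \in G(\Q)$. For $G=\GSp_4$, the elliptic endoscopic data are essentially exhausted by $G$ itself together with one proper endoscopic group $H$, so for each sign the sum has just two summands.

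I would then evaluate each summand as a product of three factors: an archimedean value, a finite-adelic orbital integral, and a global volume constant. Because $f^\infty$ is the normalized characteristic function of the hyperspecial compact $K_f = \GSp_4(\widehat{\mathbb Z})$, the transfer $f^{\infty H}$ may be taken (up to standard normalization) as the characteristic function of a hyperspecial $K_f^H \subset H(\Aff_f)$, so every finite-place orbital integral at the central stable class $\pm 1$ collapses to $\vol_{dg_f}(K_f^H)^{-1}$. The archimedean factor is a sum over Levi subgroups $M$ of $H$ of Arthur's $\Phi_M^H(\pm 1, \Theta^{E_H})$, where $\Theta^{E_H}$ is the virtual character on $H(\R)$ obtained by endoscopic transfer from the $L$-packet $\Pi_E$; these values I would compute using the closed-form expressions of \cite{SpalPhi}. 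The global volume constant is packaged into $\chi_{K_f^H}(H)$, which Theorem \ref{chi-theorem} identifies with an explicit Euler characteristic of an arithmetic subgroup of $H$.

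Finally I would compare the resulting closed-form expression with the central-unipotent terms $H_1^\pi$ recorded in \cite{Wak Mult}. The $H=G$ contribution is essentially pinned down by Arthur's stable formula \cite{A-L^2}, whose evaluation was shown in \cite{Wak Mult} to match $m_{\disc}(\pi,\Gamma)+m_{\disc}(\pi',\Gamma)$; hence the substantive content is carried by the proper endoscopic summand, which must account for the difference $H_1^\pi - H_1^{\pi'}$ with the correct sign depending on whether $\pi$ is holomorphic or large. I expect the main obstacle to be precisely this endoscopic bookkeeping: assembling the correct $\iota(G,H)$, the several values $\Phi_M^H(\pm 1, \Theta^{E_H})$, the explicit volume delivered by Theorem \ref{chi-theorem} applied to $H$, and the normalization of transfer factors, so that the closed-form total agrees with Wakatsuki's expressions on the nose in both the holomorphic and large cases simultaneously.
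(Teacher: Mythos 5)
Your plan coincides with the paper's proof: it evaluates the central terms of $ST_g$ for every cuspidal Levi subgroup of $G$ and of the unique proper elliptic endoscopic group $H$, using the closed form for $\Phi_M$ from \cite{SpalPhi}, the fundamental lemma to take $(e_{K_0})^H=e_{K_H}$, the constant $\iota(G,H)=\tfrac{1}{4}$, the archimedean transfer $e_{\pi_G}^H=e_{\pi_H}+e_{\pi_H'}$ (with a sign flip for the large discrete series), and Theorem \ref{chi-theorem} for the volume constants $\chi_{K_0}(G)$ and $\chi_{K_H}(H)$, and then matches the closed-form totals against Wakatsuki's $H_1^{\Hol}$ and $H_1^{\largess}$. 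The one small deviation is your suggestion to pin down the $H=G$ summand by appealing to Arthur's formula and Wakatsuki's packet-sum evaluation; the paper instead computes that summand directly (term-by-term matching of central terms does not formally follow from equality of the full trace formulas), but this does not change the essential approach.
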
  

We believe that the central terms of Kottwitz's formula will generally match up with the difficult central-unipotent terms of the Arthur-Selberg formula, as in this case.

Our conjecture reduces the computation of discrete series multiplicities to the computation of stable elliptic orbital integrals of various transfers $f_p^H$, written for functions on $G(\Qp)$.  Let us write this as $SO_{\gm_H}(f_p^H)$.  Here $f_p$ are characteristic functions of congruence subgroups of $G(\Qp)$ related to $\Gamma$. 
Certainly at suitably regular elements, $SO_{\gm_H}(f_p^H)$ is an unstable combination of orbital integrals of $f_p$, however there are also contributions from elliptic singular $\gm_H$, notably $\gm_H=1$.  At present, there are expressions for $f_p^H$ in the parahoric case and of course for $G(\Zp)$, but less seems to be known for smaller congruence subgroups.  On the other hand, there are many formulas for dimensions of Siegel cusp forms and discrete series multiplicities for these cases  (e.g. \cite{Wak Mult}).  This suggests that one could predict stable singular elliptic orbital integrals $SO_{\gm_H}(f_p^H)$ for the transfer $f_p^H$ of characteristic functions of congruence subgroups (e.g. Klingen, Iwahori, Siegel), by comparing our formulas.

We now describe the layout of this paper. 

In Section \ref{prelim} we set up the conventions for this study.  We explain how we are setting up the orbital integrals, and indicate our main computational tools.  We also review the Langlands correspondence for real groups.

The theory of Arthur's $\Phi$-function is reviewed in Section \ref{APF}.  In Section \ref{Kottwitz's Formula}, we review Kottwitz's stable version of Arthur's formula from \cite{SVAF}.  We also state our conjecture here.  The heart of the volume computations in this paper is in Section \ref{Euler Characteristics}, where we determine $\chi_K(G)$.
As a warm-up, we work out the classic case of $\SL_2$, with $\Gamma=\SL_2(\Z)$ in Section \ref{Special Linear}.   

The case of $G=\GSp_4$ is considerably more difficult.  We must work out several isomorphisms of real tori.   These are described in Section \ref{Real Tori}.
The basic structure of $G$ and its Langlands dual $\hat{G}$ is set up in Section \ref{Structure of}.
In Section \ref{Discrete Series for G} we work out the Langlands parameters for discrete series of $G(\R)$.
There is only one elliptic endoscopic group $H$ for $G$.  We describe $H$ in Section \ref{Endoscopic}.
 In Section \ref{Discrete Series for H}, we describe the Langlands parameters for discrete series of $H(\R)$ and describe the transfer of discrete series in this case.
In Section \ref{Levi Subgroups} we describe the Levi subgroups of $G$ and $H$ and compute various constants which occur in Kottwitz's formula for these groups.
In Section \ref{Computing for G Levis} we compute explicitly Arthur's $\Phi$-function for Levi subgroups of $G$, and we do this for Levi subgroups of $H$ in Section \ref{Computing for H Levis}.  In Section \ref{Final Central}, we write out the terms of Kottwitz's formula corresponding to central elements of $G$ and $H$, for a general arithmetic subgroup $\Gamma$.  In Section \ref{Integral Points}, we specialize to the case of $\Gamma=\GSp_4(\Z)$, and in Section \ref{Comparison} we gather our results to demonstrate Theorem \ref{Theorem}.

\bigskip

{\bf Acknowledgements:}  This paper is founded on the author's thesis under the direction of Robert Kottwitz.  The author would like to thank him for his continual help with this project.   This paper is also indebted to Satoshi Wakatsuki for predicting Theorem \ref{Theorem}, and for much useful correspondence.  We would also like to thank 
 Ralf Schmidt for helpful conversations.

\section{Preliminaries and Notation} \label{prelim}
We denote by $\Aff$ the ring of adeles over $\Q$.  We denote by $\Aff_f$ the ring of finite adeles over $\Q$, so that $\Aff=\Aff_f \times \R$.  Write $\OO_f$ for the integral points of $\Aff_f$.

If $G$ is a real Lie group, we write $G^+$ for the connected component of $G$ (using the classical topology rather than any Zariski topology). 

Let $G$ be a connected reductive group over $\R$.  A torus $T$ in $G$ is elliptic if $T/A_G$ is anisotropic (as an $\R$-torus).  Say that $G$ is cuspidal if it contains a maximal torus $T$ which is elliptic.  An element of $G(\R)$ is elliptic if it is contained in an elliptic maximal torus of $G$.  Having fixed an elliptic maximal torus $T$, the absolute Weyl group $\Omega_G$ of $T$ in $G$ is the quotient of the normalizer of  $T(\C)$ in $G(\C)$ by $T(\C)$. The real Weyl group $\Omega_{G,\R}$ of $T$ in $G$ is the quotient of the normalizer of $T(\R)$ in $G(\R)$ by $T(\R)$.  We may drop the subscript ``$G$'' if it is clear from context.  Also fix a maximal compact subgroup $K_{\R}$ of $G(\R)$.

Write $q(G)$ for half the dimension of $G(\R)/K_{\R}Z(\R)$.
If we write $R$ for the roots of $G$, with a set of positive roots $R^+$, then
\begin{equation*}
q(G)=\half  \left( |R^+|+ \dim(X) \right), 
\end{equation*}
where $X$ is the span of $R$.

If $F$ is a field, write $\Gamma_F$ for the absolute Galois group of $F$.  Suppose $G$ is an algebraic group over $F$.  If $E$ is an extension field of $F$, we write $G_E$ for $G$ viewed as an algebraic group over $E$ (by restriction).  If  $\gm$ is an element of $G(F)$, we denote by $G_\gm$ the centralizer of $\gm$ in $G$.  By $G^\circ$ we denote the
identity component of $G$ (with the Zariski topology).  Write $G_{\der}$ for the derived group of $G$.  If $G$ is a reductive group, write $G_{\simp}$ for the simply connected cover of $G_{\der}$.  Let $X^*(G)=\Hom(G_{\ol F},\Gm)$ and $X_*(G)=\Hom(\Gm,G_{\ol F})$.   These are abelian groups.  Write $X^*(G)_{\C}$ and $X_*(G)_{\C}$ for the tensor product of these groups over $\Z$ with $\C$.   Similarly with the subscript ``$\R$".  Write $A_G(F)$ for the maximal $F$-split torus in the center of $G$. 

If $G$ is an algebraic group over $\Q$, let $G(\Q)^+=G(\R)^+ \cap G(\Q)$.

\subsection{Endoscopy}

In this section we review the theory of based root data and endoscopy in the form we will use in this paper. 

The notion of a based root datum is defined in \cite{Springer}.  Briefly, it is a quadruple $\Psi=(X, \Delta, X^{\vee}, \Delta^{\vee})$ where:

\begin{itemize}
\item $X$ and $X^{\vee}$ are free, finitely generated abelian groups, in duality by a pairing
\begin{equation*} 
\lip, \rip: X \times X^{\vee} \to \Z.
\end{equation*}
\item $\Delta$ (resp. $\Delta^{\vee}$) is a finite subset of $X$ (resp. of $X^{\vee}$).
\item There is a bijection $\alpha \mapsto \alpha^{\vee}$ from $\Delta$ onto $\Delta^{\vee}$.
\item For all $\alpha$, we have $\lip \alpha, \alpha^{\vee} \rip=2$.
\item If $s_{\alpha}$ (resp. $s_{\alpha^{\vee}}$) is the reflection of $X$ (resp., of $X^{\vee}$) determined by $\alpha$ and $\alpha^{\vee}$, then $s_{\alpha}(\Delta) \subset \Delta$ (resp., $s_{\alpha^{\vee}}(\Delta^{\vee}) \subset \Delta^{\vee}$).
\end{itemize}

The dual of  $\Psi=(X, \Delta, X^{\vee}, \Delta^{\vee})$ is given simply by $\Psi^{\vee}=(X^{\vee}, \Delta^{\vee}, X, \Delta)$.

Let $\Psi=(X, \Delta, X^{\vee}, \Delta^{\vee})$ and $\Psi'=(X', \Delta', X^{' \vee}, \Delta^{' \vee})$ be two root data.  Then an isomorphism between $\Psi$ and $\Psi'$ is a homomorphism $f:  X \to X'$ so that $f$  induces a bijection of $\Delta$ onto $\Delta'$ and so that the transpose of $f$ induces a bijection of $\Delta^{\vee}$ onto $\Delta^{' \vee}$.

Let $G$ be a connected reductive group over an algebraically closed field $F$.  Fix a maximal torus $T$ and a Borel subgroup $B$ of $G$ with $T \subseteq B$.
We say in this situation that $(T,B)$ is a pair (for $G$).  The choice of pair determines a based root datum 
\begin{equation*}
\Psi_0(G,T,B)=(X^*(T), \Delta(T,B), X_*(T), \Delta^{\vee}(T,B))
\end{equation*}
for $G$.  Here $\Delta(T,B)$ is the set of simple $B$-positive roots of $T$, and $\Delta^{\vee}(T,B)$ is the set of simple $B$-positive roots of $T$.  
If another pair $T' \subseteq B'$ is chosen, the new based root datum obtained is canonically isomorphic to the original via an inner automorphism $\alpha$ of $G$.  We have $\alpha(T')=T$ and $\alpha(B')=B$.  
Although the inner automorphism $\alpha$ need not be unique, its restriction to an isomorphism $T' \isom T$ is unique.

We may remove the dependence of the based root datum on the choice of pair as follows.  Write $X^*$ (resp. $\Delta$, $X_*$, and $\Delta^{\vee}$) for the inverse limit over the set of pairs $(T,B)$ of $X^*(T)$ (resp. $\Delta(T,B)$, $X_*(T)$, $\Delta^{\vee}(T,B)$).  Then we simply define the based root datum of $G$ to be
\begin{equation*}
\Psi_0(G)=(X^*,\Delta, X_*, \Delta^{\vee}).
\end{equation*} 

Conversely, given a based root datum, the group $G$ over $F$ is uniquely determined up to isomorphism.

Let $G$ be a connected reductive group over a field $F$, and $\Psi_0(G)$ a based root datum of $G_{\ol F}$.   Then $\Gamma_F$ acts naturally (via isomorphisms) on $\Psi_0(G)$.    The action of $\Gamma_F$ on $G$ is said to be an $L$-action if it fixes some splitting of $G$ (see Section 1.3 of \cite{K84}).

Conversely, given a based root datum with a $\Gamma_F$-action, the group $G$ over $F$ is uniquely determined up to isomorphism.

\begin{defn} A dual group for $G$ is the following data:

\begin{enumerate}
\item  A connected complex reductive group with a based root datum $\Psi_0(\hat{G})$.  We write its complex points as $\hat{G}$.
\item An $L$-action of $\Gamma_{F}$ on $\hat{G}$.
\item A $\Gamma_{F}$-isomorphism from $\Psi_0(\hat{G})$ to the dual of $\Psi_0(G)$.
 \end{enumerate}
\end{defn}

To specify the isomorphism for iii) above, one typically fixes pairs $(T_0,B_0)$ of $G$ and $(\hat S_0, \hat B_0)$ of a dual group $\hat G$ and an isomorphism from
$\Psi_0(\hat{G},\hat S_0, \hat B_0)$ to the dual of $\Psi_0(G,T_0,B_0)$.  

In the case that $G$ is a torus $T$, the dual group $\hat{T}$ is simply given by 
\begin{equation} \label{dual torus}
\hat{T}=X^*(T) \otimes_{\Z} \C^{\times},
\end{equation}
with the $\Gamma_F$-action induced from $X^*(T)$.  There are canonical $\Gamma_F$-isomorphisms $X^*(\hat T) \isom X_*(T)$ and $X_*(\hat T) \isom X^*(T)$. 

The formalism for dual groups encodes canonical isomorphisms between tori.  If $T$ and $T'$ are tori, and $\varphi: T \to T'$ is a homomorphism, it induces a homomorphism $\hat{T'} \to \hat{T}$ in the evident way.

Suppose that $(T,B)$ is a pair for $G$ and $(\hat S, \hat B)$ is a pair for $\hat{G}$.  By iii) above, one has an in particular a fixed isomorphism from $\Psi_0(G,T,B)$ to the dual of $\Psi_0(\hat G, \hat S, \hat B)$.  In particular this yields an isomorphism from $X^*(T)$ to $X_*(\hat S)$, which induces an isomorphism 
\begin{equation} \label{determined}
\hat T \isom \hat S.
\end{equation}

Next, let $G$ be a connected reductive group over a field $F$, which is either local or global.  

\begin{defn}
An endoscopic group for $G$ is a triple $(H,s, \eta)$, where
\begin{itemize}
\item $H$ is a quasi-split connected group, with a fixed dual group $\hat{H}$ as above.
\item $s \in Z(\hat{H})$.
\item $\eta: \hat{H} \to \hat{G}$ is an embedding.
\item The image of $\eta$ is $(\hat{G})_{\eta(s)}^{\circ}$, the connected component of the centralizer in $\hat{G}$ of $\eta(s)$.
\item The $\hat{G}$-conjugacy class of $\eta$ is fixed by $\Gamma_F$.

Cohomology of $\Gamma_F$-modules then yields a boundary map
\begin{equation*}
\left[ Z(\hat{H})/ Z(\hat{G}) \right]^{\Gamma_F} \to H^1(F,Z(\hat{G})).
\end{equation*}

\item The image of $s$ in $Z(\hat{H})/Z(\hat{G})$ is fixed by $\Gamma$, and its image under the above boundary map is trivial if $F$ is local and locally trivial if $F$ is global.
\end{itemize}
 
 An endoscopic group is elliptic if the identity components of $Z(\hat{G})^{\Gamma_F}$ and $Z(\hat{H})^{\Gamma_F}$ agree.
\end{defn}
 
The notion of an isomorphism of endoscopic groups is defined in Section 7.5 of \cite{K84}; we do not review it here.

\subsection{Langlands Correspondence}  \label{Langlands packets}
 Let $G$ be a connected reductive group over $\R$.  
In this section we review elliptic Langlands parameters for $G$ and the corresponding $L$-packets for discrete series representations of $G(\R)$.  Our main references are  \cite{Borel} and \cite{K90}.  Write $W_\R$ for the Weil group of $\R$, and $W_{\C}$ for the canonical image of $\C^{\times}$ in $W_{\R}$.  There is an exact sequence
\begin{equation*}
 1 \to W_{\C} \to W_\R \to \Gamma_\R \to 1. 
\end{equation*}
The Weil group $W_\R$ is generated by $W_{\C}$ and a fixed element $\tau$ satisfying $\tau^2=-1$
and $\tau z \tau^{-1}=\ol{z}$ for $z \in W_{\C}$.   The action of $\Gamma_{\R}$ on $\hat{G}$ inflates to an action of $W_{\R}$ on $\hat{G}$, and through this action we form the $L$-group ${}^LG=\hat{G} \rtimes W_\R$.

A Langlands parameter $\varphi$ for $G$ is an equivalence class of continuous homomorphism $\varphi: W_\R \to {}^LG$ commuting with projection to $\Gamma_{\R}$, satisfying a mild hypothesis on the image (see \cite{Borel}).  The equivalence relation is via inner automorphisms from $\hat{G}$. One associates to a Langlands parameter $\varphi$ an $L$-packet $\Pi(\varphi)$ of irreducible admissible representations of $G$.

Suppose that $G$ is cuspidal, so that there is a discrete series representation of $G(\R)$.  This implies that the longest element $w_0$ of the Weyl group $\Omega$ acts as $-1$ on $X_*(T)$.  If $\varphi$ is a Langlands parameter, write $C_{\varphi}$ for the centralizer of $\varphi(W_{\R})$ in $\hat{G}$ and $\hat{S}$ for the centralizer of $\varphi(W_{\C})$ in $\hat{G}$.  Write $S_{\varphi}$ for the product $C_{\varphi}Z(\hat{G})$.   We say $\varphi$ is elliptic if $S_\varphi/Z(\hat{G})$ is finite, and describe the $L$-packet $\Pi(\varphi)$ in this case.

Since $\varphi$ is elliptic, the centralizer $\hat{S}$ is a maximal torus in $\hat{G}$.  Since $\varphi$ commutes with the projection to $\Gamma_{\R}$, it restricts to a homomorphism
\begin{equation*}
W_{\C} \to \hat{S} \times \{ 1 \}.
\end{equation*} 
We may view this restriction as a continuous homomorphism $\varphi: \C^{\times} \to \hat{S}$, which may be written in exponential form 
\begin{equation*}
\varphi(z)=z^{\mu} \ol{z}^{\nu}
\end{equation*}
with $\mu$ and $\nu$ regular elements of $X_*(\hat{T})_{\C}$.   Write $\hat{B}$ for the unique Borel subgroup of $\hat{G}$ containing $\hat{S}$ so that $\lip \mu, \alpha \rip$ is positive for every root $\alpha$ of $\hat{S}$ that is positive for $\hat{B}$.  We say that $\varphi$ determines the pair $(\hat{S}, \hat{B})$, at least up to conjugacy in $\hat{G}$.

Let $B$ be a Borel subgroup of $G_\C$ containing $T$.  Then $\varphi$ and $B$ determine a quasi-character $\chi_B=\chi(\varphi,B)$, as follows.  There is a canonical (up to $\hat{G}$-conjugacy) homomorphism $\eta_B:  {}^LT \to {}^LG$ described in \cite{K90} so that for $z \in W_{\C}$,
\begin{equation*}
\eta_B(z)=z^\rho \ol{z}^{-\rho} \times z \in \hat{G} \rtimes W_\R.
\end{equation*}
Here $\rho=\rho_G$ is the half sum of the $B$-positive roots for $T$.  Then a Langlands parameter $\varphi_B$ for $T$ may be chosen so that $\varphi=\eta_B \circ \varphi_B$.
Finally $\chi_B$ is the quasi-character associated to $\varphi_B$ by the Langlands correspondence for $T$ (as described in Section 9.4 of \cite{Borel}).
 
Write $\BB$ for the set of Borels of $G_\C$ containing $T$.  The $L$-packet associated to $\varphi$ is indexed by  $\Omega_\R \backslash
\BB$.  For $B \in \Omega_\R \backslash \BB$, a representation $\pi(\varphi,B)$ in the $L$-packet is given by the irreducible discrete series representation of $G(\R)$ whose character
$\Theta_\pi$ is given on regular elements $\gm$ of $T(\R)$ by

\[ (-1)^{q(G)} \sum_{\omega \in \Omega_\R} \chi_{\omega(B)}(\gm) \cdot \Delta_{\omega(B)}(\gm)^{-1}. \]

Here $\Delta_B$ is the usual discriminant
\[ \Delta_B(\gm)= \prod_{\alpha >0 \text{ for $B$}} (1-\alpha(\gm)^{-1}). \]  
    
Finally, let
\begin{equation*}
\Pi(\varphi)= \{ \pi(\varphi,B) \mid B \in \Omega_\R \backslash
\BB \}.
\end{equation*}

It has order $d(G)=|\Omega  / \Omega_\R|$.  There is a unique irreducible finite-dimensional complex
representation $E$ of $G(\C)$ with the same infinitesimal character and central character as the representations in this $L$-packet.  It has highest weight $\mu-\rho \in X^*(T)$
with respect to $B$.  The isomorphism classes of such $E$ are in one-to-one correspondence with elliptic Langlands parameters $\varphi$, and  we often write $\Pi_E$ for $\Pi(\varphi)$.

\begin{defn} We say that a discrete series representation $\pi \in \Pi_E$ is regular if the highest weight of $E$ is regular.
\end{defn}

\subsection{Measures and Orbital Integrals} \label{algorithm}
 
Let $G$ be a locally compact group with Haar measure $dg$.  If $f$ is a continuous function on $G$, write $fdg$ for the measure on $G$ given by
\begin{equation*}
\varphi \mapsto \int_G \varphi(g)f(g)dg,
\end{equation*} 
for $\varphi$ continuous and compactly supported in $G$.
We will refer to the measures obtained in this way simply as ``measures".  If $G$ is a $p$-adic, real, or adelic  Lie group, we require that $f$ be suitably smooth.

In this paper, we will view orbital integrals and Fourier transforms as distributions defined on measures, rather than on functions.  This approach eases their dependence on choices of local measures, choices which do not matter in the end.

If $K$ is an open compact subset of $G$, then write $e_K$ for the measure given by $fdg$, where 
$f$ is the characteristic function of $K$ divided by $\vol_{dg}(K)$.  Note that the measure $e_K$ is independent of the choice of Haar measure $dg$.

Let $G$ be a reductive group defined over a local field $F$.
Fix a Haar measure $dg$ on $G(F)$. Let $fdg$ be a measure on $G(F)$, and take a semisimple element $\gamma \in G(F)$.   Fix a Haar measure $dt$ of $G(F)_{\gm}^{\circ}$.  Then we write $O_\gm(f dg; dt)$ for the usual orbital integral

\begin{equation*}
O_\gm(f dg; dt)=\int_{G_\gm^{\circ}(F) \bks G(F)} f(g^{-1} \gm g) \frac{dg}{dt}.
\end{equation*}

Many cases of finite orbital integrals are easy to compute by the following result, a special case extracted from Section 7 of \cite{K86}:

\begin{prop} \label{orbitalintegrals} Let $F$ be a $p$-adic field with ring of integers $\OO$.  Let $G$ be a split connected reductive group defined over $\OO$, and $K=G(\OO)$.  Suppose that $\gm \in K$ is semisimple, and that $1-\alpha(\gm)$ is either $0$ or a unit for every root $\alpha$ of $G$.  Let $\gm'$ be stably conjugate to $\gm$.  Then $O_{\gm'}(e_{K}; dt)$ vanishes unless $\gm'$ is conjugate to $\gm$, in which case

\begin{equation*}
O_{\gm'}(e_{K}; dt)=\vol_{dt}(G_\gm^{\circ}(F) \cap K)^{-1}.
\end{equation*}
\end{prop}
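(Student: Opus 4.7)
The plan is to prove the statement in three steps: first reduce to the case $\gm' \in K$; then establish the key lemma that two stably conjugate elements of $K$ satisfying the root hypothesis are already $K$-conjugate; finally unwind the integral. Since $e_K$ is supported on $K$, a nonzero $O_{\gm'}(e_K;dt)$ requires some $g \in G(F)$ with $g^{-1}\gm' g \in K$. The orbital integral depends only on the $G(F)$-conjugacy class of $\gm'$, so I may replace $\gm'$ by $g^{-1}\gm' g$ and assume $\gm' \in K$. The vanishing half of the statement then reduces to the assertion that stably conjugate $\gm, \gm' \in K$ (with $\gm$ satisfying the root hypothesis) are actually $G(F)$-conjugate, in fact $K$-conjugate.

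For the key lemma I reduce modulo the maximal ideal and work with the reductions $\bar\gm, \bar\gm' \in G(\kappa)$, where $\kappa$ is the residue field. The hypothesis that $1 - \alpha(\gm)$ is zero or a unit for every root $\alpha$ has two essential consequences: the centralizer scheme over $\OO$ is smooth with special fiber $G_{\bar\gm}$, so the root system of $G_{\bar\gm}^\circ$ coincides with the subsystem of roots with $\alpha(\gm) = 1$; and the orbit map $g \mapsto g^{-1}\gm g$ is smooth at the identity because its differential $\mathrm{Ad}(\gm^{-1}) - 1$ is surjective onto a complement of $\mathfrak{g}_\gm$ (the eigenvalues $\alpha(\gm)^{-1} - 1$ on the non-fixed root spaces are units). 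Since $\gm, \gm'$ are $\ol F$-conjugate, their reductions are $\ol\kappa$-conjugate; Lang's theorem applied to the connected reductive $\kappa$-group $G_{\bar\gm}^\circ$ then gives $\bar k \in G(\kappa)$ with $\bar k^{-1} \bar\gm \bar k = \bar\gm'$. Lifting $\bar k$ to $k_0 \in K$ via the surjection $G(\OO) \twoheadrightarrow G(\kappa)$, I am reduced to showing that $\delta := k_0^{-1}\gm k_0$ and $\gm'$, which now have the same reduction, are $K$-conjugate. This is a standard Hensel argument: smoothness of the orbit map at the identity allows one to solve $k^{-1}\delta k = \gm'$ modulo successive powers of the maximal ideal, and the iteration converges in the complete ring $\OO$.

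For the computation, having assumed $\gm' = \gm$, the same key lemma identifies the set $\{g \in G(F) : g^{-1}\gm g \in K\}$ as $G_\gm(F) \cdot K$: any such $g$ satisfies $g^{-1}\gm g = k^{-1}\gm k$ for some $k \in K$, so $g k^{-1} \in G_\gm(F)$. The root hypothesis also forces $G_\gm$ to be connected in the present setting (for split $G$, the obstruction to connectedness lies in root data that the hypothesis kills), so this set equals $G_\gm^\circ(F) \cdot K$. The orbital integral then unwinds to
\[
O_\gm(e_K;dt) \;=\; \vol_{dg}(K)^{-1} \int_{G_\gm^\circ(F)\backslash G_\gm^\circ(F) K} \frac{dg}{dt} \;=\; \vol_{dt}\bigl(G_\gm^\circ(F)\cap K\bigr)^{-1},
\]
as claimed. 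The main obstacle is the Hensel step in the middle paragraph: producing an actual $K$-conjugator from one defined over $\kappa$. This is exactly where the root-regularity hypothesis pays for itself, both by making the centralizer scheme smooth over $\OO$ and by making the orbit map transversal enough for successive approximation to succeed.
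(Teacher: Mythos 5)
The paper does not actually prove this proposition --- it is quoted as ``a special case extracted from Section 7'' of Kottwitz \cite{K86} --- and your outline (reduce to $\gm'\in K$, use smoothness of the centralizer scheme and of the orbit map, descend conjugacy to the residue field, apply Lang's theorem, lift by successive approximation) is indeed the standard shape of that argument. There is, however, a genuine gap, concentrated in your parenthetical claim that the root hypothesis forces $G_\gm$ to be connected. That claim is false: take $G=\PGL_2$ over $\Zp$ with $p$ odd and $\gm$ the image of $\diag(1,-1)$; then $1-\alpha(\gm)=2$ is a unit for both roots, yet $G_\gm$ is the full normalizer of the diagonal torus, with two components. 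The disconnectedness is not ignorable, because your key lemma silently reads ``stably conjugate'' as ``conjugate over $\ol F$'', and in the disconnected case the lemma so read is false: with $\gm$ as above and $\gm'$ the image of $\left(\begin{smallmatrix}0&u\\1&0\end{smallmatrix}\right)$ for $u\in\Zp^{\times}$ a non-square, both elements lie in $K=\PGL_2(\Zp)$ and satisfy the unit condition, they are conjugate over $\ol F$, they are not conjugate in $G(F)$ (a conjugation would force $u$ to be a square), and $O_{\gm'}(e_K;dt)\neq 0$ since $\gm'\in K$. The proposition is only correct with the strict definition of stable conjugacy (the cocycle $\sigma\mapsto g\sigma(g)^{-1}$ valued in $G_\gm^{\circ}$, as in the definition of Langlands cited in the paper), and your argument never invokes that condition; it is needed exactly at your descent step, since when $G_{\bar\gm}$ is disconnected Lang's theorem applied to $G_{\bar\gm}^{\circ}$ does not by itself produce a $\kappa$-rational conjugator --- one must first exhibit a Frobenius-stable component of the transporter, and that is precisely what the cocycle condition (or connectedness, when available) provides.

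Two related steps also need repair. First, the assertion that reductions of $\ol F$-conjugate elements of $K$ are $\ol\kappa$-conjugate is not automatic (compare $\diag(1,1+p)$ in $\GL_2(\Zp)$ with an $F$-conjugate whose reduction is a nontrivial unipotent); under the unit hypothesis you must first show that $\bar\gm$ and $\bar\gm'$ are semisimple and then compare their images under the Chevalley morphism $G\to T/\!/W$ over $\OO$. Second, in the final unwinding what you need is not connectedness but the identity $G_\gm(F)=G_\gm^{\circ}(F)\,\bigl(G_\gm(F)\cap K\bigr)$; this is true under the hypothesis, but it requires the smooth integral model of the full centralizer together with Lang's theorem applied to each $F$-rational component (each such component of the special fibre is a torsor under $G_{\bar\gm}^{\circ}$, hence has a $\kappa$-point, which lifts to an $\OO$-point by smoothness). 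Without it your computation acquires the spurious factor $[G_\gm(F):G_\gm^{\circ}(F)(G_\gm(F)\cap K)]$. With these repairs --- using the strict notion of stable conjugacy, justifying semisimplicity and conjugacy of the reductions, and handling the component group of the centralizer via its smooth model --- your approach does go through and is essentially the argument of \cite{K86}.
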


Now let $G$ be a reductive group defined over $\Q$.  

Let $f^\infty dg_f$ be a measure on $G(\Aff_f)$ and take a semisimple element $\gm \in G(\Aff_f)$.  Fix a Haar measure  $dt_f$ of $G_{\gm}^{\circ}(\Aff_f)$.   Write $O_\gm(f^\infty dg_f; dt_f)$ for the orbital integral

\begin{equation*}
O_\gm(f^\infty dg_f; dt_f)=\int_{G_\gm^{\circ}(\Aff_f) \bks G(\Aff_f)} f^{\infty}(g^{-1} \gm g) \frac{dg_f}{dt_f}.
\end{equation*}
 
We also have the stable orbital integrals
\begin{equation*}
SO_\gm(f^\infty dg_f; dt_f)= \sum_i e(\gm_i) O_{\gm_i}(f^\infty dg_f; dt_{i,f}),
\end{equation*}
the sum being over $\gm_i \in G(\Aff_f)$ (up to $G(\Aff_f)$-conjugacy) whose local components are stably conjugate to $\gm$.  The centralizers of $\gm$ and a given $\gm_i$ are inner forms of each other, and we use corresponding measures $dt_f$ and $dt_{i,f}$.
The number $e(\gm_i)$ is defined as follows:  Recall that for a reductive group $A$ over a local field, Kottwitz has defined an invariant $e(A)$ in \cite{Kottwitz sign}.  It is equal to $1$ if $A$ is quasi-split.
For each place $v$ of $\Q$, write $\gm_{i,v}$ for the $v$th component of $\gm_i$.  Let
\begin{equation*}
e(\gm_{i,v})=e(G_{\gm_{i,v}}^{\circ}(\Q_v)).
\end{equation*} 
Finally, let
\begin{equation*}
e(\gm_i)=\prod_v e(\gm_{i,v}).
\end{equation*}
 
\begin{defn} Let $M$ be a Levi component of a parabolic subgroup $P$ of $G$, and $dm_f$ a Haar measure on $M(\Aff_f)$.   Given a measure $f^\infty dg_f$, its ``$M$-constant term'' is the measure $f_M^{\infty} dm_f$, where $f_M^{\infty}$ is defined via
\begin{equation*}
 f_M^\infty(m)=  \delta_{P(\Aff_f)}^{-\half}(m) \int_{N(\Aff_f)} \int_{K_f} f^\infty(k^{-1}nmk)dk_fdn_f. 
 \end{equation*}
 Here we fix the Haar measure $dk_f$ on $K_f$ giving it mass one, and the Haar measure $dn_f$ on $N(\Aff_f)$ is chosen so that $dg_f=dk_f dn_f dm_f$.  The function $\delta_{P(\Aff_f)}$ is the modulus function on $P(\Aff_f)$.
\end{defn} 
 
It is independent of the choice of parabolic subgroup $P$.
 
\begin{prop} Let $G$ be a split group defined over $\Z$ and let $K_f=G(\OO_f)$.  Then
\begin{equation*}
(e_{K_f})_M= e_{M(\Aff_f) \cap K_f}.
\end{equation*}
\end{prop}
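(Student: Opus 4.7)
My plan is to compute $f_M^\infty$ directly for $f^\infty = \mathbf{1}_{K_f}/\vol_{dg_f}(K_f)$ and verify that it equals $\mathbf{1}_{M \cap K_f}/\vol_{dm_f}(M \cap K_f)$. I will work one prime at a time; since everything factors, this amounts to the analogous statement for $G(\Zp) \subset G(\Qp)$.

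The first step is to simplify the integrand. Since $K_f$ is a subgroup, for any $k \in K_f$ the condition $k^{-1} n m k \in K_f$ is equivalent to $nm \in K_f$, so the integrand $\mathbf{1}_{K_f}(k^{-1}nmk)$ is constant on $K_f$ in the variable $k$ and the outer $K_f$-integral just contributes $\vol_{dk_f}(K_f) = 1$. Next, invoke the Iwasawa/Iwahori factorization available for a split reductive group over $\Z$ and a parabolic $P$ defined over $\Z$: one has
\begin{equation*}
K_f \cap P(\Aff_f) = \bigl(K_f \cap N(\Aff_f)\bigr) \cdot \bigl(K_f \cap M(\Aff_f)\bigr).
\end{equation*}
Since $N \cap M = \{1\}$, the product decomposition $nm \in K_f$ is equivalent to $n \in K_f \cap N$ and $m \in K_f \cap M$ separately. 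Consequently
\begin{equation*}
\int_N \mathbf{1}_{K_f}(nm)\,dn_f \;=\; \mathbf{1}_{K_f \cap M}(m)\cdot \vol_{dn_f}(K_f \cap N).
\end{equation*}

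Second, I need to evaluate $\vol_{dg_f}(K_f)$ using the compatibility $dg_f = dk_f\,dn_f\,dm_f$ from the definition. Applying this to the characteristic function of $K_f$ and using the same factorization $K_f \cap P = (K_f \cap N)(K_f \cap M)$, together with $\vol_{dk_f}(K_f)=1$, yields
\begin{equation*}
\vol_{dg_f}(K_f) \;=\; \vol_{dn_f}(K_f \cap N)\cdot \vol_{dm_f}(K_f \cap M).
\end{equation*}

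Third, observe that the factor $\delta_{P(\Aff_f)}^{-1/2}(m)$ disappears on the support: $m \in K_f \cap M$ lies in the compact group $M(\OO_f)$, so $\delta_P(m)$ is a continuous homomorphism from a compact group to $\R_{>0}$ and therefore equals $1$. Putting the three ingredients together gives, for $m \in M(\Aff_f)$,
\begin{equation*}
f_M^\infty(m) \;=\; \frac{\vol_{dn_f}(K_f \cap N)}{\vol_{dg_f}(K_f)}\cdot \mathbf{1}_{K_f \cap M}(m) \;=\; \frac{\mathbf{1}_{K_f \cap M}(m)}{\vol_{dm_f}(K_f \cap M)},
\end{equation*}
which is exactly the density of $e_{M(\Aff_f) \cap K_f}$ against $dm_f$. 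The only nontrivial input is the factorization $K_f \cap P = (K_f \cap N)(K_f \cap M)$; this is the main point to justify and follows from smoothness of $P$ and $M$ over $\Z$ together with Hensel's lemma reducing the statement to the corresponding decomposition over each residue field $\F_p$.
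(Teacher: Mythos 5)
Your argument is correct and is essentially the paper's own proof, just written out in more detail: the paper likewise computes $f_M^\infty(m)$ directly, observes it vanishes unless $m \in M(\Aff_f)\cap K_f$, and concludes from the volume factorization $\vol_{dg_f}(K_f)=\vol_{dm_f}(M(\Aff_f)\cap K_f)\vol_{dn_f}(N(\Aff_f)\cap K_f)\vol_{dk_f}(K_f)$, which is the same identity you derive from $K_f\cap P=(K_f\cap N)(K_f\cap M)$. Your explicit justification of that factorization (and of $\delta_P\equiv 1$ on the compact support) simply fills in steps the paper leaves implicit.
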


\begin{proof}
Write $e_{K_f}=f^{\infty} dg_f$.  Then it is easy to see that $f_M^{\infty}(m)=0$ unless $m \in K_f$.  If $m \in K_f$, we compute that
\begin{equation*}
f_M^{\infty}(m)=\frac{\vol_{dk_f}(K_f) \vol_{dn_f}(K_f \cap N(\Aff_f))}{\vol_{dg_f}(K_f)}.
\end{equation*}
The result follows since $\vol_{dg_f}(K_f)=\vol_{dm_f}(M(\Aff_f) \cap K_f) \vol_{dn_f}(N(\Aff_f) \cap K_f) \vol_{dk_f}(K_f)$.
\end{proof}

\subsection{Pseudocoefficients} \label{PC}

We continue with a connected reductive group $G$ over $\Q$, and adopt some terminology from \cite{A-L^2}.
Fix a maximal compact subgroup $K_{\R}$ of $G(\R)$.  We put $K_{\R}'=K_{\R}A_G(\R)^+$.  Given a quasicharacter (smooth homomorphism to $\C^{\times}$) $\xi$ on $A_G(\R)^+$, write $\mathcal{H}_{\ac}(G(\R),\xi)$ for the set of smooth, $K_{\R}'$-finite functions on $G(\R)$ which are compactly supported modulo $A_G(\R)^+$, and transform under $A_G(\R)^+$ according to $\xi$.  Write $\Pi(G(\R),\xi)$ for the set of irreducible representations of $G(\R)$ whose central character restricted to $A_G(\R)^+$ is equal to $\xi$.

Given a function $f \in \mathcal{H}_{\ac}(G(\R),\xi^{-1})$, a representation $\pi \in \Pi(G(\R),\xi)$, and a Haar measure $dg_{\infty}$ on $G(\R)$, write $\pi(f dg_{\infty})$ for the operator on the space of $\pi$ given by the formula:

\begin{equation*}
\pi(f dg_{\infty})= \int_{G(\R)/A_G(\R)^+} f(x) \pi(x) dg_{\infty}.
\end{equation*}
Here we give $A_G(\R)^+$ the measure corresponding to Lebesgue measure on $\R^n$, if $A_G$ is $n$-dimensional.  The operator is of trace class.  

Write $\Pi_{\tmp}(G(\R),\xi)$ (resp. $\Pi_{\disc}(G(\R),\xi))$ for the subset of tempered (resp. discrete series) representations in $\Pi(G(\R),\xi)$.

\begin{defn} Suppose that $f \in \mathcal{H}_{\ac}(G(\R),\xi^{-1})$.  Then we say that the measure $fdg_{\infty}$ is cuspidal if $\tr \pi(f dg_{\infty})$, viewed as a function on  $\Pi_{\tmp}(G(\R),\xi)$, is supported on $\Pi_{\disc}(G(\R),\xi)$. \end{defn}

Write $\tilde{E}$ for the contragredient of the representation $E$.  In \cite{A-L^2}, Arthur employs cuspidal measures $f_E \in \mathcal{H}_{\ac}(G(\R),\xi^{-1})$ whose defining property is that, for all $\pi \in \Pi_{\tmp}(G(\R),\xi)$,
\begin{equation} \label{packet ps}
\tr \pi(f_E dg_{\infty})=
  \begin{cases} & (-1)^{q(G)} \text{ if } \pi \in \Pi_{\tilde{E}}, \\
              & 0, \text{ otherwise}. \end{cases}
\end{equation}
 
Such measures can be broken down further.  

\begin{defn} \label{pseudoscience}
Fix a representation $\pi_0 \in \Pi_{\disc}(G(\R), \xi^{-1})$, and let $f_0 \in \mathcal{H}_{\ac}(G(\R),\xi^{-1})$.
Suppose that the measure $f_0 dg_{\infty}$  satisfies, for all $\pi \in \Pi_{\tmp}(G(\R),\xi)$,
\begin{equation*}
\tr \pi(f_0 dg_{\infty})=
  \begin{cases} & (-1)^{q(G)} \text{ if } \pi \cong \tilde \pi_0, \\
              & 0, \text{ otherwise}. \end{cases}
\end{equation*}
  
It follows from the corollary in Section 5.2 of \cite{CD} that such functions exist.   Pick such a function $f_0$, and put
\begin{equation*}
e_{\pi_0}=f_0 dg_{\infty}.
\end{equation*}
\end{defn} 
 
 Suppose that for each $\pi \in \Pi_E$ we fix measures $e_{\pi}$ as above.  Let
\begin{equation*}
f_E dg_{\infty} =\sum_{\pi} e_{\pi},
\end{equation*}
the sum being over $\pi \in \Pi_E$.  Then clearly $f_E dg_{\infty}$ satisfies Arthur's condition (\ref{packet ps}).

{\bf Remark:} The measure $(-1)^{q(G)}e_{\pi}$ is called a pseudocoefficient of $\tilde \pi$. 

\section{Transfer} \label{Transfer}

We sketch the important theory of transfer in the form that we will use in this paper.

Suppose that $G$ is a real connected reductive group, and that $(H,s,\eta)$ is an elliptic endoscopic group for $G$.  Fix an elliptic maximal torus $T_H$ of $H$, an elliptic maximal torus $T$ of $G$, and an isomorphism 
$j: T_H \isom T$ between them.  Also fix a Borel subgroup $B$ of $G_\C$ containing $T$ and a Borel subgroup $B_H$ of $H_\C$ containing $T_H$.

Suppose that $\xi$ is a quasicharacter on $A_G(\R)$, and that $f_{\infty} \in \mathcal{H}_{\ac}(G(\R),\xi^{-1})$, with $f_{\infty} dg_{\infty}$ cuspidal.
There is a corresponding quasicharacter $\xi_H$ on $A_H(\R)$ described in Section 5.5 of \cite{SVAF}.  

There is also a measure $f_{\infty}^H dh_{\infty}$ on $H(\R)$ with $f_{\infty}^H \in \mathcal{H}_{\ac}(H(\R),\xi_H^{-1})$, having matching character values.   (See \cite{Shelstad}, \cite{CD}, \cite{CD2}, \cite{LS 90}.)
More specifically, let $\varphi_H$ be a tempered Langlands parameter for $H_{\R}$, and write $\Pi_H=\Pi(\varphi_H)$ for the corresponding $L$-packet of discrete series representations of $H(\R)$.  Transport $\varphi_H$ via $\eta$ to a tempered Langlands parameter $\varphi_G$ for $G$.  The parameters $\varphi_G$ and $\varphi_H$ determine pairs $(\hat S, \hat B)$ and $(\hat S_H, \hat B_H)$ as in Section \ref{Langlands packets}.

Then
\begin{equation} \label{charidentity}
\tr \Pi_H(f_{\infty}^H dh_{\infty})= \sum_{\pi \in \Pi} \Delta_{\infty}(\varphi_H,\pi) \cdot \tr \pi(f_{\infty} dg_{\infty}),
\end{equation}
using Shelstad's transfer factors $\Delta_{\infty}(\varphi_H,\pi)$.   Both sides of (\ref{charidentity}) vanish unless $\Pi_H$ is a discrete series packet.  In particular, $f_{\infty}^H dh_{\infty}$ is cuspidal, and it may be characterized by (\ref{charidentity}).  (The transfer $f_{\infty}^H dh_{\infty}$ is only defined up to the kernel of stable distributions.)  We may use this formula to identify it as a combination of pseudocoefficients.
 
It is a delicate matter to specify the transfer factors.  We will use a formula for $\Delta_{\infty}(\varphi_H,\pi)$  from \cite{K90}, which is itself a reformulation of \cite{Shelstad}.  One must carefully specify the duality between $G$ and $\hat{G}$, and between $H$ and $\hat{H}$, because this factor depends on precisely how this is done.  It also depends on the isomorphism $j: T_H \isom T$, which must be compatible with correspondences of tori determined by the Langlands parameters, as specified below.   

\begin{defn} The triple $(j,B_T, B_{T_H})$ is aligned with $\varphi_H$ if the following diagram commutes:

\begin{equation} \label{alignment}
\begin{CD}
\hat{T} @ >>> \hat{S} \\
@V \hat{j} V V        @ AA \eta A \\
\hat{T}_H @ >>> \hat{S}_H
\end{CD}.
\end{equation}

Here the isomorphism $\hat{T} \to \hat{S}$ (resp., $\hat{T}_H \to \hat{S}_H$) is determined, as in (\ref{determined}), by $(B,\hat{B})$ (resp., $(B_H,\hat{B}_H)$).  The map $\hat{j}$ is the dual map to $j$ using the identification (\ref{dual torus}) of the dual tori.
\end{defn}

For each $\omega \in \Omega$,  there is a character 
\begin{equation*}
a_{\omega}: \left( \hat{T} / Z(\hat{G}) \right)^{\Gamma_{\R}} \to \{ \pm 1 \}
\end{equation*}
described in \cite{K90}.

If the triple $(j,B_T,B_{T_H})$ is aligned with $\varphi_H$, then we may take as transfer factors
\begin{equation*}
\Delta_{\infty}(\varphi_H, \pi(\varphi, \omega^{-1}(B)))= \lip a_{\omega}, \hat j^{-1}(s) \rip.
\end{equation*}

Next, let $G$ be a connected reductive algebraic group over $\Q$, and $(H,s,\eta)$ an endoscopic group for $G$.  Given a measure $f^{\infty}dg_f$ on $G(\Aff_f)$, there is a measure $f^{\infty H} dh_f$ on $H(\Aff_f)$ so that for all $\gm_H \in H(\Aff_f)$ suitably regular, one has
\begin{equation*}
SO_{\gm_H}(f^{\infty H} dh_f )= \sum_{\gm} \Delta^{\infty}(\gm_H,\gm) O_{\gm}(f^{\infty} dg_f).
\end{equation*}
The sum is taken over $G(\Aff_f)$-conjugacy classes of ``images" $\gm \in G(\Aff_f)$ of $\gm_H$.   We have written $\Delta^{\infty}(\gm_H,\gm)$ for the the Langlands-Shelstad transfer factors.   One takes matching measures on the centralizers of $\gm_H$ and the various $\gm$ in forming the quotient measures for the orbital integrals.  We have left out many details; please see Langlands-Shelstad \cite{LS 90} and Kottwitz-Shelstad \cite{KS} for definitions, and Ng\^{o} \cite{Ngo} for the celebrated proof.

\section{Arthur's $\Phi$-Function} \label{APF}

In this section we consider a reductive group $G$ defined over $\R$.  Let $T$ be a maximal torus contained in a Borel subgroup $B$ of $G_{\C}$. Let $A$ be the split part of $T$, let $T_e$ be the
maximal elliptic subtorus of $T$, and $M$ the centralizer of $A$ in $G$. It is a Levi subgroup of $G$ containing $T$. Let $E$ be an irreducible
finite-dimensional representation of $G(\C)$, and consider the $L$-packet $\Pi_E$ of discrete series representations $\pi$ of $G(\R)$ which have the
same infinitesimal and central characters as $E$. Write $\Theta_\pi$ for the character of $\pi$, and put

\[ \Theta^E= (-1)^{q(G)} \sum_{\pi \in \Pi_E} \Theta_\pi. \]

Note that $\Theta^E(\gm)$ will not extend continuously to all elements $\gm \in
T(\R)$, in particular to $\gm= 1$. Define the function $D^G_M$ on $T$ by
\[ D^G_M(\gm) = \det(1-\Ad(\gm);\Lie(G)/\Lie(M)). \]

Then a result of Arthur and Shelstad \cite{A-L^2}  states that the function
\[ \gm \mapsto |D^G_M(\gm)|^\half \Theta^E(\gm), \]
defined on the set of regular elements $T_{\reg} (\R)$, extends continuously to $T(\R)$.  We denote this extension by $\Phi_M(\gm,\Theta^E)$.
 The following closed expression for $\Phi_M(\gm,\Theta^E)$ when $\gm \in T_e$ is given in \cite{SpalPhi}.

\begin{prop}  \label{MPI} If $\gm \in T_e(\R)$, then
\begin{equation} \label{SpalPhiformula}
 \Phi_M(\gm, \Theta^E)=(-1)^{q(L)}|\Omega_L| \sum_{\omega \in \Omega^{LM}} \eps(\omega) \tr(\gm; V^M_{\omega(\lambda_B+\rho_B)-\rho_B}).
\end{equation}

In particular,
\begin{enumerate}
\item If $T$ is elliptic then $M=G$ and $\Phi_G(\gm,\Theta^E)= \tr(\gm; E)$.

\item If $T$ is split and $z \in A_G(\R)$ then $M=A$ and $\Phi_A(z,\Theta^E)=(-1)^{q(G)} |\Omega_G| \lambda_0(z)$.
\end{enumerate}
 \end{prop}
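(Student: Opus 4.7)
The plan is to follow the approach of \cite{SpalPhi}, starting from Harish-Chandra's explicit formula for discrete series characters on the regular set of $T$, stabilizing across the packet $\Pi_E$, and then taking a careful limit at the (possibly singular) elliptic element $\gm$.

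First, recall from Section \ref{Langlands packets} that for $B \in \BB$ the representation $\pi(\varphi,B)$ has character on $T_{\reg}(\R)$ given by
\[
\Theta_{\pi(\varphi,B)}(\gm) = (-1)^{q(G)} \sum_{\omega \in \Omega_\R} \chi_{\omega(B)}(\gm) \Delta_{\omega(B)}(\gm)^{-1}.
\]
Summing over $B \in \Omega_\R \bks \BB$ to form $\Theta^E$ collapses the $\Omega_\R$-sums and yields, on $T_{\reg}(\R)$, the stable expression
\[
\Theta^E(\gm) = \sum_{B \in \BB} \chi_B(\gm) \Delta_B(\gm)^{-1},
\]
i.e.\ a sum of Weyl-type terms indexed by $\Omega$-translates of a fixed Borel.

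Next I would split the Weyl denominator $\Delta_B$ according to the decomposition of roots into those of $M$ and those not in $M$. The roots not in $M$ pair nontrivially with the split part $A$, so the factor $\prod_{\alpha \notin M}(1-\alpha(\gm)^{-1})$ is (up to sign) $D^G_M(\gm)$ on $T_e$, hence has constant sign and nonzero absolute value as $\gm$ varies in $T_e(\R)$. Multiplying by $|D^G_M(\gm)|^{1/2}$ therefore controls these factors and reduces the problem to analyzing what happens to the remaining $M$-Weyl denominator as $\gm$ approaches a singular elliptic element. Grouping the Borels $B$ into $\Omega_M$-orbits, one sees that the inner sum over each orbit is precisely the numerator of a Weyl character formula for an $M$-representation, with highest weight parameter $\omega(\lambda_B + \rho_B) - \rho_B$ for $\omega$ ranging over $\Omega^{LM}$ (the minimal-length coset representatives for $\Omega_L \bks \Omega_M$). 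Taking the continuous extension via l'H\^opital/Kostant-style limits produces the trace of $\gm$ on the $M$-module $V^M_{\omega(\lambda_B+\rho_B)-\rho_B}$, with the prefactor $(-1)^{q(L)}|\Omega_L|$ emerging from the stabilizer count and the sign conventions $q(G) - q(M) = |R^+ \setminus R^+_M|/2$.

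The two corollaries are specializations. For (i), $T$ elliptic forces $A = 1$, so $M = G$, the cosets $\Omega^{LM}$ reduce to the identity, $V^M_{\lambda_B} = E$, and the formula collapses to $\tr(\gm; E)$. For (ii), $T$ split means $M = A$, the only $M$-module appearing is the character $\lambda_0$, and a direct count of the cosets gives the prefactor $|\Omega_G|$; the sign $(-1)^{q(G)}$ follows since $q(L) = q(G)$ when $L$ is chosen compatibly with a central $z$. The main obstacle is the passage from the regular expression to the continuous extension: one must show the cancellations in the numerator exactly compensate the zeros of the $M$-Weyl denominator at the singular elliptic points, which is where the explicit identification with $\tr(\gm; V^M_{\bullet})$ is delicate and uses the regularity of $\lambda_B + \rho_B$ together with standard Weyl-denominator identities.
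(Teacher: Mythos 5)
The paper does not actually prove Proposition \ref{MPI}: it invokes the continuity result of Arthur--Shelstad and then quotes the closed formula from \cite{SpalPhi}, so your attempt has to be measured against that reference rather than an in-paper argument. Judged on its own terms, your sketch has two genuine gaps. First, your starting point, $\Theta^E(\gm)=\sum_{B\in\BB}\chi_B(\gm)\Delta_B(\gm)^{-1}$ on $T_{\reg}(\R)$, is only valid when $T$ is the elliptic maximal torus; that is the setting of Section \ref{Langlands packets}. In Proposition \ref{MPI} the torus $T$ has a nontrivial split part $A$ (otherwise $M=G$ and the statement is immediate), and to compute $\Phi_M$ at $\gm\in T_e(\R)$ you must know $\Theta^E$ at regular points of this non-elliptic Cartan, which are never stably conjugate to elliptic elements. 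There the stable discrete series character is not the naive Weyl-type sum: already for $\SL_2(\R)$ on the split Cartan it is proportional to $e^{-(k-1)|t|}/|e^{t}-e^{-t}|$, whereas your formula would also produce the growing exponential. The correct input is the Harish-Chandra/Herb-type formula for averaged discrete series characters on an arbitrary Cartan (the form used in \cite{GKM} and \cite{SpalPhi}), and its chamber-dependent coefficients are precisely what the computation has to control.

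Second, your reduction rests on a false claim: you assert that $\prod_{\alpha\notin R_M}(1-\alpha(\gm)^{-1})$, essentially $D^G_M(\gm)$, ``has constant sign and nonzero absolute value as $\gm$ varies in $T_e(\R)$''. A root outside $M$ is nontrivial on $A$, but that says nothing about its values on $T_e$: every real root (root of $L$) is trivial on $T_e$, so $D^G_M$ vanishes identically on $T_e(\R)$ whenever $L\supsetneq T$, and it also vanishes at the central elements $\gm=\pm1$ that this paper actually needs. Hence you cannot shunt all singular behaviour into the $M$-Weyl denominator; the limit in the $A$-directions along real-root walls is exactly where the prefactor $(-1)^{q(L)}|\Omega_L|$ and the restriction of the sum to $\Omega^{LM}$ arise, and your sketch supplies no mechanism for them beyond asserting they ``emerge from the stabilizer count''. (Also, $\Omega^{LM}$ is not a set of coset representatives for $\Omega_L\backslash\Omega_M$: the groups $\Omega_L$ and $\Omega_M$ are generated by reflections in disjoint sets of roots, and $\Omega^{LM}\subset\Omega$ consists of elements that are simultaneously Kostant representatives for $L$ and for $M$.) The specializations (i) and (ii) do follow formally from the displayed formula, but as a derivation of the formula itself the proposal is missing its two essential ingredients.
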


The notation needs to be explained.  Here $L$ is the centralizer of $T_e$ in $G$.  The roots of $T$ in $L$ (resp. $M$) are the real (resp. imaginary) roots of $T$ in $G$.  Write $\Omega_L$ and $\Omega_M$ for the respective Weyl groups.  Write $\Omega^{LM}$ for the set of elements which are simultaneously Kostant representatives for both $L$ and $M$, relative to $B$.  We write $\eps$ for the sign character of $\Omega_G$.  Finally by
$V^M_{\omega(\lambda_B+\rho_B)-\rho_B}$ we denote the irreducible finite-dimensional representation of $M(\C)$ with highest weight $\omega(\lambda_B+\rho_B)-\rho_B$, where $\lambda_B$ is the $B$-dominant highest weight of $E$.
Finally, $\lambda_0$ is the character by which $A_G(\R)$ acts on $E$.

For the case of central $\gm=z$, computing $\Phi_M(z,\Theta^{E})$ amounts to computing the dimensions of finite-dimensional representations of $M(\C)$ with various highest weights.  For this we use the Weyl dimension formula, in the following form:

\begin{prop} \label{Weyl} (Weyl Dimension Formula) Let $G$ be a complex reductive group, $T$ a maximal torus in $G$, contained in a Borel subgroup $B$.  Write $\rho_B$ for the half-sum of the positive roots for $T$ in $G$ (with respect to $B$).  Let $\lambda_B \in X^*(T)$ be a positive weight.  Then there is a unique irreducible representation $V_{\lambda_B}$ of $G$ with highest weight $\lambda_B$.  Its dimension is given by
\begin{equation*}
\dim_\C V_{\lambda_B}= \prod_{\alpha >0} \frac{ \lip \alpha, \lambda_B+ \rho_B \rip}{\lip \alpha, \rho_B \rip}.
\end{equation*}
\end{prop}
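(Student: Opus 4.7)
The plan is to derive the dimension formula as a limiting case of the Weyl character formula, which is the standard and cleanest route. Reduce first to the semisimple (or even simply connected) case, noting that the center of $G$ acts by a scalar on $V_{\lambda_B}$ and so does not affect the dimension count; the roots and $\rho_B$ live on the semisimple part anyway, so the statement for $G$ follows from the statement for $G_{\der}$ (or its simply connected cover).

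With this reduction in place, I would recall the Weyl character formula: for regular $H \in \Lie(T)$,
\begin{equation*}
\chi_{\lambda_B}(\exp H) = \frac{\sum_{w \in \Omega} \eps(w)\, e^{\lip w(\lambda_B+\rho_B), H\rip}}{\sum_{w \in \Omega} \eps(w)\, e^{\lip w(\rho_B), H\rip}},
\end{equation*}
together with the Weyl denominator identity
\begin{equation*}
\sum_{w \in \Omega} \eps(w)\, e^{\lip w(\rho_B), H\rip} = \prod_{\alpha > 0}\bigl(e^{\lip \alpha, H\rip/2} - e^{-\lip \alpha, H\rip/2}\bigr).
\end{equation*}
Since $\dim_\C V_{\lambda_B} = \chi_{\lambda_B}(1)$, I need to evaluate the right-hand side at $H = 0$, where both numerator and denominator vanish to order $|R^+|$.

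The key step is the limiting argument. Choose a specific regular element, for instance $H_t = t \cdot h$ where $h$ is dual to $\rho_B$ (so $\lip \alpha, h\rip = \lip \alpha, \rho_B\rip$ up to the chosen pairing identification), and let $t \to 0$. Applying the denominator identity to both the numerator (viewed as the Weyl denominator for a shifted weight lattice, i.e.\ replacing $\rho_B$ by $\lambda_B + \rho_B$) and the denominator, both sides factor as products of $\sinh$-type terms. Taking the leading term in $t$ of each factor $e^{t\lip \alpha, \lambda_B+\rho_B\rip/2} - e^{-t\lip \alpha, \lambda_B+\rho_B\rip/2} \sim t\lip \alpha, \lambda_B+\rho_B\rip$, the $t^{|R^+|}$ factors cancel between numerator and denominator, leaving exactly $\prod_{\alpha > 0} \lip \alpha, \lambda_B+\rho_B\rip / \lip \alpha, \rho_B\rip$.

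The main obstacle (such as it is for a textbook result) is making the ``shifted Weyl denominator'' manipulation rigorous: the sum $\sum_w \eps(w) e^{\lip w(\lambda_B+\rho_B), H\rip}$ is not literally a Weyl denominator for a root system, but it factors in an analogous way because $\lambda_B + \rho_B$ is regular and dominant, so the $\Omega$-orbit sum with signs can be rewritten via the same skew-symmetrization trick. Uniqueness and existence of $V_{\lambda_B}$ follow from Cartan--Weyl highest weight theory and may simply be cited.
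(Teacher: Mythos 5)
The paper offers no proof of this proposition at all: it is quoted as the classical Weyl dimension formula (the surrounding text just says it will be used to evaluate $\Phi_M$ at central elements), so there is no internal argument to compare yours against. Your proposal is the standard textbook derivation and is correct in substance. The one step worth tightening is the "shifted Weyl denominator" manipulation: the numerator $\sum_{w \in \Omega} \eps(w)\, e^{\lip w(\lambda_B+\rho_B), H\rip}$ does \emph{not} factor as a product for general $H$, so one should not speak of applying the denominator identity to it directly. What makes the limit computable is that along your chosen ray $H_t = t h_{\rho}$ (with $h_\rho$ dual to $\rho_B$) the $\Omega$-invariance of the pairing and $\eps(w)=\eps(w^{-1})$ give $\sum_{w} \eps(w)\, e^{t\lip w(\lambda_B+\rho_B), \rho_B\rip} = \sum_{w} \eps(w)\, e^{t\lip w(\rho_B), \lambda_B+\rho_B\rip}$, i.e.\ the genuine Weyl denominator evaluated at $t h_{\lambda_B+\rho_B}$; the denominator identity then applies verbatim to both numerator and denominator, and the leading term in $t$ of each factor yields the stated ratio, with no need for $\lambda_B+\rho_B$ to be dominant (regularity is irrelevant to the identity itself, only to nonvanishing of the answer's interpretation). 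Your reduction to $G_{\der}$ is fine, since the pairings $\lip \alpha, \cdot \rip$ annihilate the central component of $\lambda_B+\rho_B$, and existence and uniqueness of $V_{\lambda_B}$ are correctly delegated to highest weight theory.
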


Here $\lip, \rip$ is a $\Omega_G$-invariant inner product on $X^*(T)_\R$, which is unique up to a scalar.

\section{Kottwitz's Formula} \label{Kottwitz's Formula}

\subsection{Various Invariants} \label{Various Invariants}
 
In this section we introduce some invariants involved in Kottwitz's formula.  
 
By $\ol{G}$ we generally denote an inner form of $G_{\R}$ such that $\ol{G}/A_G$ is anisotropic over
$\R$.

\begin{defn} Let $G$ be a cuspidal reductive group over $\R$, and $dg_{\infty}$ a Haar measure on $G(\R)$.  Let
\begin{equation*}
\ol{v}(G; dg_{\infty})=e(\ol{G}) \vol(\ol{G}(\R)/ A_G(\R)^+).
\end{equation*}
\end{defn}

This is a stable version of the constant $v(G)$ which appears in \cite{A-L^2}.

Here $e(\ol{G})$ is the sign defined in \cite{Kottwitz sign}.  (Note that $e(\ol{G})=(-1)^{q(G)}$ when $G$ is quasisplit.)
In both cases the Haar measure on $\ol{G}(\R)$ is transported from $dg_\infty$ on $G(\R)$ in the usual way, and the measure on $A_G(\R)^+$ is the standard Lebesgue measure.

\begin{defn} Let $G$ be a cuspidal connected reductive group over $\Q$.  Then $G$ contains a maximal torus $T$ so that $T/ A_G$ is anisotropic over $\R$.  Write  $T_{\simp}$ for the inverse image in $G_{\simp}$ of $T$.
  Then $k(G)$ is the cardinality of the
set $\im[ H^1(\R,T_{\simp}) \to H^1(\R,T)]$.
\end{defn}
 
\begin{defn} If $G$ is a reductive group over $\Q$, write $\tau(G)$ for the Tamagawa number of $G$, as defined in \cite{Ono}.
\end{defn}

By \cite{K88} or \cite{SVAF}, the Tamagawa numbers $\tau(G)$ for a reductive group $G$ over $\Q$ may be computed using the formula
\begin{equation*}
\tau(G)=|\pi_0(Z(\hat{G})^{\Gamma_\Q})| \cdot |\ker^1(\Q,Z(\hat{G}))|^{-1}.
\end{equation*}

Here $\pi_0$ denotes the topological connected component.

\begin{defn} Let $M$ be a Levi subgroup of $G$.  Then put
\begin{equation*}
n^G_M=[N_G(M)(\Q) : M(\Q)].
\end{equation*}
\end{defn}

Here $N_G(M)$ denotes the normalizer of $M$ in $G$.

\begin{defn} Let $\gm \in M(\Q)$ be semisimple.  Then put 
\begin{equation*}
\ol{\iota}^M(\gm)=|(M_{\gm} / M_{\gm}^{ \circ})(\Q)|
\end{equation*}
and
\begin{equation*}
\iota^M(\gm)=[M_{\gm}(\Q) : M_{\gm}^{ \circ}(\Q)].
\end{equation*}

\end{defn}

Let $(H,s,\eta)$ be an endoscopic triple for $G$, and write $\Out(H,s,\eta)$ for its outer automorphisms.  Put
\begin{equation*}
\iota(G,H)= \tau(G) \tau(H)^{-1} |\Out(H,s,\eta)|^{-1}.
\end{equation*}

\subsection{The Formula}

In this section we give Kottwitz's formula from \cite{SVAF}.

Our $G$ will now be a cuspidal connected reductive group over $\Q$.  Let $f^{\infty} \in C_c^{\infty}(G(\Aff_f))$ and $f_{\infty}   \in \mathcal{H}_{\ac}(G(\R),\xi)$ for some $\xi$.  We consider measures $fdg$ of the form $fdg=f^{\infty}dg_f \cdot f_{\infty}dg_{\infty} \in C_c^{\infty}(G(\Aff))$, for some decomposition $dg=dg_f dg_{\infty}$ of the Tamagawa measure on $G(\Aff_f)$.  Also choose such decompositions for every cuspidal Levi subgroup $M$ of $G$.

First we define the stable distribution $S\Phi_M$ at the archimedean place:

\begin{defn} Let  $M$ be a cuspidal Levi subgroup of $G$.  Let $\gm \in M(\Q)$ be elliptic, and pick a Haar measure $dt_{\infty}$ of $M_{\gm}^{\circ}(\R)$.  Then $S \Phi_M(\gm,f_\infty dg_\infty; dt_{\infty})$
is defined to be
\[ (-1)^{\dim(A_M/A_G)} k(M)k(G)^{-1} \ol{v}(M_\gm^{\circ}; dt_{\infty})^{-1} \sum_\Pi \Phi_M(\gm^{-1},\Theta_\Pi) \tr \Pi(f_\infty dg_\infty), \]
\end{defn}
the sum being taken over $L$-packets of discrete series representations.

Here is the basic building block of Kottwitz's formula:
\begin{defn}
Let $M$ be a cuspidal Levi subgroup of $G$, and $\gm \in M(\Q)$ an elliptic element.  Pick Haar measures $dt_f$ on $M_{\gm}^{\circ}(\Aff_f)$ and $dt_{\infty}$ on $M_{\gm}^{\circ}(\R)$ whose product is the Tamagawa measure $dt$ on $M_{\gm}^{\circ}(\Aff)$.

We define
\begin{equation*}
ST_g(fdg,\gm, M)=  (n^G_M)^{-1} \tau(M) \ol{\iota}^M(\gm)^{-1} SO_\gm(f_M^\infty dm_f; dt_f) S \Phi_M(\gm,f_\infty dg_\infty; dt_{\infty}).
\end{equation*}
\end{defn}

Here $f_M^{\infty}dm_f$ is the $M$-constant term of $f^{\infty}dg_f$.  The product $SO_{\gm}(f_M^{\infty}dm_f; dt_f) \ol v(M; dt_{\infty})$ is independent of the decompositions of $dt$ and $dg$.  We will therefore often write this simply as $SO_{\gm}(f_M^{\infty} dm_f) \ol v(M)$.  Similarly for other such products.

Kottwitz defines
\begin{equation*}
ST_g(fdg)= \sum_M \sum_{\gm \in M} ST_g(fdg,\gm, M).
\end{equation*}

Here $M$ runs over $G(\Q)$-conjugacy classes of cuspidal Levi subgroups in $G$, and the second sum runs over stable $M(\Q)$-conjugacy classes of semisimple elements $\gm \in M(\Q)$ which are elliptic in $M(\R)$.

For convenience we also define, for $\gm \in G(\Q)$ semisimple,
\begin{equation*}
ST_g(f dg, \gm)= \sum_M ST_g( fdg, \gm, M),
\end{equation*}
The sum being taken over cuspidal Levi subgroups of $G$ with $\gm \in M(\Q)$.

Kottwitz's stable version of Arthur's trace formula is given by
\begin{equation*}
\Kot(f dg)=\sum_{(H,s,\eta) \in \funE_0} \iota(G,H) ST_g(f^H dh),
\end{equation*}
where $\funE_0$ is the set of (equivalence classes of) elliptic endoscopic groups for $G$.

We record here the simpler form of $ST_g(fdg, \gm, M)$ when $\gm=z$ is in the rational points $Z(\Q)$ of the center of $G$.  
We have  
\begin{equation*}
ST_g(f dg, z, M)=     (-1)^{\dim(A_M/A_G)} \frac{k(M)}{k(G)} (n^G_M)^{-1} \tau(M) f_M^\infty(z)  \ol{v}(M; dm_{\infty})^{-1} \Phi_M(z^{-1}, \Theta_{\Pi}).
\end{equation*}

\subsection{Conjecture} \label{Conjecture}
 
Recall the stable cuspidal measure $f_E dg_{\infty}$ from Section \ref{PC}.   Fix any test function $f^{\infty} dg_f$ and put $f=f^{\infty}f_E dg$.

Let
\begin{equation*}
T_g(fdg)= \sum_M (n^G_M)^{-1} \sum_\gm \iota^M(\gm)^{-1} \tau(M_\gm) O_\gm(f_M^\infty dm_f) \Phi_M(\gm,f_E dg_{\infty}).
\end{equation*}

Here as in \cite{A-L^2}, $\Phi_M(\gm,-)$ is the unnormalized form of Arthur's distribution $I_M$ defined in  \cite{ITF1}.   

Now suppose that $\pi \in \Pi_{\disc}(G(\R), \xi)$, and let $K_f$ be an open compact subgroup of $G(\Aff_f)$.  Write
\begin{equation*}
L^2(G(\Q) \bks G(\Aff) / K_f, \xi)
\end{equation*}
for the space of functions on this double coset space which transform by $A_G(\R)^+$ according to $\xi$ and are square integrable modulo center.
Write $R_{\disc}(\pi, K_f)$ for the $\pi$-isotypical subspace of $L^2(G(\Q) \bks G(\Aff) / K_f, \xi)$; it is finite-dimensional.  If $f^{\infty}dg_f$ is $K_f$-biinvariant, then convolution gives an operator 
$R_{\disc}(\pi,f^{\infty}dg_f)$ on $R_{\disc}(\pi, K_f)$.  According to Arthur's Corollary 6.2 of \cite{A-L^2}, if the highest weight of $E$ is regular, then

\begin{equation*}
\sum_{\pi \in \Pi_E} \tr R_{\disc}(\pi,f^{\infty}dg_f) = T_g(fdg).
\end{equation*}

The main result of Kottwitz \cite{SVAF} is that when $f_{\infty} dg_{\infty}$ is stable cuspidal, then
\begin{equation*}
T_g(f dg)=\Kot(f dg).
\end{equation*}

Since we may assume $f_Edg_{\infty}= \sum_{\pi \in \Pi_E}  e_{\pi}$, the following conjecture is plausible:

\begin{conj} 
Fix a regular discrete series representation $\pi$ of $G(\R)$.  Let $f_{\infty} dg_{\infty}=e_{\pi}$ as in Section \ref{PC}.   Pick a measure $f^{\infty} dg_f $ with $f^{\infty} \in C_c(G(\Aff_f))$, and $dg_f dg_{\infty}=dg$ the Tamagawa measure on $G(\Aff)$.  Put $f=f^{\infty} f_{\infty}$.  Then

\begin{equation*}
\Kot(f dg)=\tr R_{\disc}(\pi,f^{\infty} dg_f).
\end{equation*}

\end{conj}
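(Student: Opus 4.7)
The plan is to derive the individual identity by combining the packet-summed case, which follows immediately from the results already recalled in the paper, with endoscopic character relations that isolate $\pi$ within its $L$-packet $\Pi_E$. First, set $f_E\,dg_\infty=\sum_{\pi'\in\Pi_E}e_{\pi'}$, so that $f_E\,dg_\infty$ satisfies Arthur's condition (\ref{packet ps}) and is stable cuspidal. Arthur's Corollary 6.2 of \cite{A-L^2} then gives
\[
\sum_{\pi'\in\Pi_E}\tr R_{\disc}(\pi',f^\infty dg_f)=T_g(f^\infty f_E\,dg),
\]
and Kottwitz's main theorem of \cite{SVAF} identifies the right-hand side with $\Kot(f^\infty f_E\,dg)$. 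By linearity of $\Kot$ in the archimedean measure, the conjecture therefore holds after averaging over $\Pi_E$; one is left with identifying the contribution of each individual $\pi$.

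The natural tool to accomplish this isolation is endoscopic transfer. For each elliptic endoscopic datum $(H,s,\eta)$ and each tempered Langlands parameter $\varphi_H$ whose transport via $\eta$ is $\varphi_E$, the character identity (\ref{charidentity}) applied to $e_\pi$ reads
\[
\tr\Pi(\varphi_H)(e_\pi^H\,dh_\infty)=(-1)^{q(G)}\Delta_\infty(\varphi_H,\pi).
\]
Shelstad's inversion of these identities should express the indicator of $\{\pi\}\subset\Pi_E$ as a weighted sum of stable $\Pi(\varphi_H)$-characters, running over $s$ in the $S$-group of $\varphi_E$, and hence should write $e_\pi$, modulo the kernel of stable distributions on $G(\R)$, as a linear combination of transfers of stable cuspidal measures $e_H$ on endoscopic groups. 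Substituting this decomposition and invoking the packet-summed case on each $H$ (i.e. Arthur's and Kottwitz's theorems applied inside $H$) should reduce the conjecture to a term-by-term matching with the endoscopic decomposition $\Kot(fdg)=\sum_H\iota(G,H)ST_g(f^H\,dh)$, where the transfer factors $\Delta_\infty(\varphi_H,\pi)$ appear symmetrically on both sides.

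The main obstacle is spectral rather than geometric. Arthur's Corollary 6.2 computes only the packet sum $\sum_{\pi'\in\Pi_E}\tr R_{\disc}(\pi',f^\infty dg_f)$, not the individual traces, so isolating $\tr R_{\disc}(\pi,f^\infty dg_f)$ requires a genuine multiplicity formula describing how the $\pi'$-isotypic components vary across the packet via characters of the global component group $\mathcal{S}_\pi$. Such a formula is known conjecturally and is available in special cases (for $\GSp_4$ through work of Arthur and Wakatsuki), but not for a general cuspidal reductive group over $\Q$. A secondary difficulty is that Shelstad's inversion determines $e_\pi$ only modulo the stable kernel, so one must check that this ambiguity is annihilated by both $\Kot$ and $\tr R_{\disc}(\pi,\cdot)$; this should follow from the cuspidality and spectral support of $e_\pi$, but requires delicate bookkeeping at singular elliptic elements — notably central ones, where the $\Phi$-function, the transfer factors, and the normalizations $\ol v$, $k$, $\iota(G,H)$ all degenerate. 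Theorem \ref{Theorem} of this paper, which verifies the matching of central contributions in the $\GSp_4$ case, is evidence that these fine normalizations do in fact conspire correctly, and it is precisely the prototype computation one would want to generalize.
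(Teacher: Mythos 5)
The statement you were asked about is stated in the paper as a conjecture: the paper offers no proof, only the plausibility argument that you reproduce in your first paragraph (the packet-summed identity obtained by combining Arthur's Corollary 6.2 of \cite{A-L^2} with Kottwitz's theorem $T_g(fdg)=\Kot(fdg)$ for stable cuspidal $f_\infty dg_\infty$, together with $f_E dg_\infty=\sum_{\pi'\in\Pi_E}e_{\pi'}$), plus computational evidence ($\SL_2$ in full, and the central terms for $\GSp_4$, which is Theorem \ref{Theorem}). So there is no paper proof to compare against, and your proposal, as you yourself concede, is a strategy rather than a proof.

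The gaps you flag are real, but let me sharpen where the strategy actually breaks. First, isolating $\tr R_{\disc}(\pi,f^\infty dg_f)$ inside the packet sum requires a refined spectral multiplicity formula (multiplicities expressed through characters of a global component group), i.e.\ an endoscopic classification of the discrete spectrum for an arbitrary cuspidal reductive $G/\Q$; assuming such a formula is assuming something at least as strong as the conjecture itself. Second, your proposed ``term-by-term matching'' treats $ST_g(f^H dh)$ as if it were the packet-summed discrete trace on $H$, so that ``the packet-summed case on each $H$'' could be invoked; but $ST_g$ is a purely geometric stable distribution (a sum over elliptic stable classes and cuspidal Levis), and identifying it with spectral data on $H$ requires the full, recursive stabilization of the trace formula for $H$ together with a stable multiplicity formula --- again precisely the content being conjectured, so the reduction is circular. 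Third, your claim that ``the main obstacle is spectral rather than geometric'' undersells the geometric problem: Arthur's and Kottwitz's identities need $f_\infty dg_\infty$ stable cuspidal, and for a single pseudocoefficient $e_\pi$ the invariant trace formula has nonvanishing weighted orbital integrals $I_M(\gamma,f)$ at elements with nontrivial unipotent part, for which no comparison with $\Kot$ is known; the paper's introduction says explicitly that such calculations have not been made and that this is why it resorts to a conjecture plus numerical verification. On the other hand, your worry about the stable kernel at singular central elements is not really an issue: the archimedean inputs to $ST_g$ (the terms $S\Phi_M$) depend on $f_\infty dg_\infty$ and $f^H_\infty dh_\infty$ only through stable (packet-averaged) traces, so $\Kot(e_\pi f^\infty dg)$ is well defined; the difficulty is not well-definedness but the absence of the spectral and geometric identities above.
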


In particular, if we pick a compact open subgroup $K_f$ of $G(\Aff_f)$, and put  $f^\infty dg_f=e_{K_f}$, we obtain
\begin{equation*}
m_{\disc}(\pi,K_f)=\Kot(e_{\pi} e_{K_f}).
\end{equation*}

In this paper we give some evidence for this conjecture.  Moreover, we will see that $\Kot(fdg)$ is given by a closed algebraic expression, which is straightforward to evaluate, so 
long as one can compute the transfers $e_{\pi}^H$ at the real place, and evaluate the elliptic orbital integrals of $f^{\infty H} dh_f$ at the finite adeles.

 \section{Euler Characteristics} \label{Euler Characteristics}
We have finished our discussion of Kottwitz's formula, and now solve the arithmetic volume problem mentioned in the introduction.
For simplicity we will write $K$ rather than $K_f$ for open compact subgroups of $G(\Aff_f)$ in this section.

\begin{defn}
For $K$ a compact open subgroup of $G(\Aff_f)$, we define
\begin{equation*}
\chi_{K}(G)=\ol{v}(G; dg_{\infty})^{-1} \vol_{dg_f}(K)^{-1} \tau(G) d(G),
\end{equation*}
if $G$ is cuspidal.  If $G$ is not cuspidal, then $\chi_K(G)=0$.
\end{defn}
 
Note that if $K_0$ is another compact open subgroup of $G(\Aff_f)$, with $K \subseteq K_0$ of finite index, then $\chi_K(G)=[K_0:K] \chi_{K_0}(G)$.   In this section we compute the quantities $\chi_{K}(G)$ under some mild hypotheses on $G$.   

\subsection{Statement of Theorem}

Before getting embroiled in details, let us sketch the idea of the computation of $\chi_K(G)$.  The computation is considerably easier if $K$ is sufficiently small.  In this case, $\chi_K(G)$ is the classical Euler characteristic of a Shimura variety.  This in turn may be written in terms of Euler characteristics of an arithmetic subgroup of $G_{\ad}(\R)$.    For $G$ a semisimple and simply connected Chevalley group, such Euler characteristics were computed in \cite{Harder}.

Our work is to reduce to this case.  Given a compact open subgroup $K_0$ of $G(\Aff_f)$, we will pick a sufficiently small subgroup $K$ of $K_0$.  By the above we know the analogue of $\chi_K(G)$ for $G^{\simp}$.   To compute $\chi_{K_0}(G)$ we have two tasks:  to transition between $G$ and $G^{\simp}$, and to transition between $K$ and $K_0$.  
 
The resulting formula entails several standard definitions:

\begin{defn} Write $G(\R)_+ \subseteq G(\R)$ for the inverse image of $G_{\ad}(\R)^+$. Let $G(\Q)_+=G(\Q) \cap G(\R)_+$.  Write $\nu: G \twoheadrightarrow C$ for the quotient of $G$ by $G_{\der}$.  Let $C(\R)^\dagger=\nu(Z(\R))$, and $C(\Q)^\dagger=C(\Q) \cap C(\R)^\dagger$.
Write $\rho: G_{\simp} \to G_{\der}$ for the usual covering of $G_{\der}$ by $G_{\simp}$.
For $K$ a compact open subgroup of $G(\Aff_f)$, let $K^{\der}=G_{\der}(\Aff_f) \cap K$, and let $K^{\simp}$ be the preimage of $K$ in $G_{\simp}(\Aff_f)$.  Let $\Gamma_K=G(\Q)_+ \cap K$, let
$\Gamma^{\der}_K=G_{\der}(\Q)_+ \cap K$, let $\Gamma^{\simp}_K=K^{\simp} \cap G_{\simp}(\Q)_+$, and write $\Gamma^{\ad}_K$ for the image of $\Gamma_K$ in $G_{\ad}(\Q)$.
\end{defn}

In this section we avoid certain awkward tori for simplicity, preferring the following kind: 

\begin{defn} A torus $T$ over $\Q$ is $\Q \R$-equitropic if the largest $\Q$-anisotropic torus in $T$ is $\R$-anisotropic.
\end{defn}
Here are some basic facts about $\Q \R$-equitropic tori.
\begin{prop}
If $T$ is a $\Q \R$-equitropic torus then $T(\Q)$ is discrete in $T(\Aff_f)$.
If $G$ is a reductive group, and the connected component $Z^\circ$ of the center of $G$ is $\Q \R$-equitropic, then its derived quotient $C$ is also $\Q \R$-equitropic.
\end{prop}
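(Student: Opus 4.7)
For statement (i), my approach is to exploit the almost-direct product decomposition $T = T_{\mathrm{an}} \cdot A$, where $T_{\mathrm{an}}$ is the maximal $\Q$-anisotropic subtorus of $T$ and $A$ is the maximal $\Q$-split subtorus. Since the product map $T_{\mathrm{an}} \times A \to T$ is a $\Q$-isogeny with finite kernel, the discreteness of $T(\Q)$ in $T(\Aff_f)$ reduces to the corresponding statements for $T_{\mathrm{an}}$ and $A$ separately, together with a check that the finite kernel does not disturb discreteness under the induced maps on $\Q$- and $\Aff_f$-points.

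For the split factor $A \cong \Gm^r$, the claim reduces to the classical fact that $\Q^\times$ is discrete in $\Aff_f^\times$: any rational number lying in the open compact subgroup $\hat\Z^\times$ is a $p$-adic unit at every prime, hence equal to $\pm 1$. For the anisotropic factor $T_{\mathrm{an}}$, the $\Q\R$-equitropic hypothesis tells us $T_{\mathrm{an}}$ is also $\R$-anisotropic, so $T_{\mathrm{an}}(\R)$ is compact. Combined with the general fact that $T_{\mathrm{an}}(\Q)$ is a discrete subgroup of the full adele group $T_{\mathrm{an}}(\Aff) = T_{\mathrm{an}}(\R) \times T_{\mathrm{an}}(\Aff_f)$, this implies that for any compact open $V \subset T_{\mathrm{an}}(\Aff_f)$, the intersection $T_{\mathrm{an}}(\Q) \cap (T_{\mathrm{an}}(\R) \times V)$ is a discrete subset of a compact set and hence finite; projecting to $T_{\mathrm{an}}(\Aff_f)$ then gives the desired discreteness.

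For statement (ii), the natural composition $Z^\circ \hookrightarrow G \twoheadrightarrow C$ is an isogeny of $\Q$-tori, since its kernel $Z^\circ \cap G_{\der}$ lies in the finite center of the semisimple group $G_{\der}$. Both $\Q$-anisotropy and $\R$-anisotropy are isogeny invariants of tori (they can be read off from the character module tensored with $\Q$), so the maximal $\Q$-anisotropic subtorus of $Z^\circ$ maps isogenously onto that of $C$, and the $\R$-anisotropy transfers. Hence $C$ is $\Q\R$-equitropic.

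The step I expect to be most delicate is the descent in statement (i): one must transfer the factor-wise discreteness statements through the isogeny $T_{\mathrm{an}} \times A \to T$, and the finite kernel here could in principle complicate the induced maps on adelic and rational points. This should reduce to a short diagram chase with the cohomology of the finite kernel and is not deep, but it merits explicit attention.
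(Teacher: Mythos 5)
Your overall strategy is sound, and it is genuinely different from the paper's, which disposes of the first statement simply by citing Theorem 5.26 of Milne's notes (that theorem is precisely the criterion that $T(\Q)$ is discrete in $T(\Aff_f)$ when the largest $\Q$-anisotropic subtorus is $\R$-anisotropic) and declares the second statement straightforward. In effect you are reproving the cited result via the almost-direct product $T=T_{\mathrm{an}}\cdot A$, and your treatment of the two factors is correct: a rational point of $\Gm^r$ lying in $(\widehat{\Z}^{\times})^r$ is a unit everywhere, hence lies in $\{\pm 1\}^r$, and for $T_{\mathrm{an}}$ the compactness of $T_{\mathrm{an}}(\R)$ (this is where the $\Q\R$-equitropic hypothesis enters) together with the discreteness of $T_{\mathrm{an}}(\Q)$ in $T_{\mathrm{an}}(\Aff)$ gives finiteness of $T_{\mathrm{an}}(\Q)\cap V$ for $V$ compact open. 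Your argument for the second statement is complete as stated: $Z^{\circ}\to C$ is surjective with finite kernel $Z^{\circ}\cap G_{\der}$, and both the identification of the maximal $\Q$-anisotropic subtorus and $\R$-anisotropy are isogeny invariants, read off from $X^*(-)\otimes\Q$ as a Galois module.

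The step you flag, descent through the isogeny $\phi\colon T_{\mathrm{an}}\times A\to T$, is indeed the only delicate point, and it is slightly more than a ``short diagram chase with the cohomology of the finite kernel'': the quotient $T(\Q)/\phi\bigl(T_{\mathrm{an}}(\Q)\times A(\Q)\bigr)$ injects into $H^1(\Q,\ker\phi)$, which is infinite, so one cannot argue that $T(\Q)$ is a finite extension of the image and then spread discreteness coset by coset without further input. Two standard ways to close the gap: (a) use the dual isogeny --- since $\ker\phi$ is killed by some $n$, there is an isogeny $\psi\colon T\to T_{\mathrm{an}}\times A$ with $\psi\circ\phi=[n]$; for any compact open $K\subset T(\Aff_f)$ the set $\psi(K)$ is compact, so $\psi\bigl(T(\Q)\cap K\bigr)\subset (T_{\mathrm{an}}\times A)(\Q)\cap\psi(K)$ is finite by the factor-wise discreteness you established, and the fibers of $\psi$ on rational points have size at most $|(\ker\psi)(\Q)|$, whence $T(\Q)\cap K$ is finite and $T(\Q)$ is discrete; or (b) observe that the connecting classes in $H^1(\Q,\ker\phi)$ of elements of $T(\Q)\cap K$ are unramified outside a fixed finite set of places, and such classes form a finite subgroup, which confines $T(\Q)\cap K$ to finitely many translates of the image of $(T_{\mathrm{an}}\times A)(\Q)$, after which properness of $\phi$ on adelic points reduces you to the factors. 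With either patch your proof of the first statement is complete, and it has the merit of being self-contained where the paper relies on a citation.
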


\begin{proof} The first statement follows from Theorem 5.26 of \cite{Milne}.  The second is straightforward.
\end{proof}

In \cite{Serre}, Serre introduces an Euler characteristic $\chi_{\alg}(\Gamma) \in \Q$ applicable to any group $\Gamma$ with a finite index subgroup $\Gamma_0$ which  is torsion-free and has finite cohomological dimension.  In particular, it applies to our congruence subgroups $\Gamma=\Gamma_K$. 
Here are some simple properties of $\chi_{\alg}$:
\begin{itemize}
\item For an exact sequence of the form
\begin{equation*}
1 \to A \to B \to C \to 1,
\end{equation*}
with $A,B$ and $C$ groups as above,
we have $\chi_{\alg}(B)=\chi_{\alg}(A) \cdot \chi_{\alg}(C)$. 

\item If $\Gamma$ is a finite group, then
$\chi_{\alg}(\Gamma)= |\Gamma|^{-1}$.
\end{itemize}

The theorem of this section relates $\chi_K(G)$ to $\chi_{\alg}(\Gamma^{\simp}_{K})$.  More precisely :

\begin{thm} \label{chi-theorem}
Let $G$ be a reductive group over $\Q$.  Assume that $G_{\simp}$ has no compact factors, and that the connected component $Z^{\circ}$ of the center of $G$ is $\Q \R$-equitropic.  Let $K_0 \subset G(\Aff_f)$ be a compact open subgroup.
Then $\chi_{K_0}(G)$ is equal to
\begin{equation*}
\frac{ |\ker (\rho(\Q))| [G_{\der}(\Aff_f):G_{\der}(\Q)_+ K^{\der}_0] \left[\Gamma^{\der}_{K_0}:G_{\der}(\Q)_+ \cap \rho(K_0^{\simp}) \right] \cdot [C(\Aff_f) : C(\Q)^\dagger  \nu(K_0)]}{ [G(\R): G(\R)_+] |\nu(K_0) \cap C(\Q)^\dagger|}          \chi_{\alg}(\Gamma^{\simp}_{K_0}).
\end{equation*}

\end{thm}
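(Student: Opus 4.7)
The plan is to identify $\chi_{K_0}(G)$ with a classical Euler characteristic of an adelic arithmetic quotient via Gauss-Bonnet, and then to use the tower $G_{\simp} \xrightarrow{\rho} G_{\der} \hookrightarrow G \twoheadrightarrow C$ to express that Euler characteristic in terms of $\chi_{\alg}(\Gamma_{K_0}^{\simp})$, to which Harder's formula for semisimple simply connected Chevalley groups applies. Each correction factor in the stated expression will arise from this bookkeeping.

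First I would interpret $\chi_{K_0}(G)$ geometrically. The factor $\ol{v}(G;dg_{\infty})^{-1} d(G)$ converts invariant volume on $\ol{G}(\R)/A_G(\R)^+$ to the Euler characteristic of the associated symmetric space via Gauss-Bonnet, while $\tau(G)\vol_{dg_f}(K_0)^{-1}$ records the total Tamagawa volume of $G(\Q)\bks G(\Aff)/K_0 A_G(\R)^+$. Decomposing this quotient into its connected components and applying Gauss-Bonnet on each yields $\chi_{K_0}(G) = h \cdot \chi_{\alg}(\Gamma_{K_0})$, where the $G(\Q)_+$-conjugates of $\Gamma_{K_0}$ attached to a set of component representatives all have the same $\chi_{\alg}$, and $h$ is their number. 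Since $G_{\simp}$ has no compact factors, strong approximation applies to $G_{\simp}$, and a diagram chase through $1 \to G_{\der} \to G \to C \to 1$ identifies $h$ as a fiber product quantity controlled by the class numbers $[G_{\der}(\Aff_f) : G_{\der}(\Q)_+ K_0^{\der}]$ and $[C(\Aff_f) : C(\Q)^\dagger \nu(K_0)]$, corrected by $[G(\R) : G(\R)_+]$ and $|\nu(K_0) \cap C(\Q)^\dagger|$. The $\Q\R$-equitropic hypothesis on $Z^\circ$ (hence on $C$) ensures that $C(\Q)$ is discrete in $C(\Aff_f)$, so these class numbers are finite.

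It remains to pass from $\chi_{\alg}(\Gamma_{K_0})$ to $\chi_{\alg}(\Gamma_{K_0}^{\simp})$. Using the exact sequences $1 \to \Gamma_{K_0}^{\der} \to \Gamma_{K_0} \to \Gamma_{K_0}/\Gamma_{K_0}^{\der} \to 1$ and $1 \to \ker\rho(\Q) \to \Gamma_{K_0}^{\simp} \to \rho(\Gamma_{K_0}^{\simp}) \to 1$, together with the identification $\rho(\Gamma_{K_0}^{\simp}) = G_{\der}(\Q)_+ \cap \rho(K_0^{\simp}) \subseteq \Gamma_{K_0}^{\der}$, and the multiplicativity of $\chi_{\alg}$ in exact sequences combined with its behaviour under finite-index inclusions ($\chi_{\alg}(H) = [G:H]\chi_{\alg}(G)$) and on finite groups ($\chi_{\alg}(F) = |F|^{-1}$), I would extract the remaining factors $|\ker\rho(\Q)|$, $[\Gamma_{K_0}^{\der} : G_{\der}(\Q)_+ \cap \rho(K_0^{\simp})]$, and $|\nu(K_0) \cap C(\Q)^\dagger|^{-1}$. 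Assembling these contributions with the component count gives the theorem.

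The hard part will be the careful bookkeeping of indices across the three successive passages $G \leadsto G_{\der} \leadsto \rho(G_{\simp}) \leadsto G_{\simp}$: each introduces a defect of Galois-cohomological type, and one must verify that the $\Gamma$-level indices exactly cancel against, or combine with, the adelic class-number indices in the stated formula. In particular, identifying $\Gamma_{K_0}/\Gamma_{K_0}^{\der}$ with a subgroup of $\nu(K_0) \cap C(\Q)^\dagger$ and checking that the image of $\Gamma_{K_0}^{\simp}$ in $G_{\der}(\Q)_+$ coincides with $G_{\der}(\Q)_+ \cap \rho(K_0^{\simp})$ both rely on the $\Q\R$-equitropic hypothesis together with a diagram chase using strong approximation; getting these intersections right is where the main arithmetic subtlety lies.
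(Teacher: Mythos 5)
Your overall route---interpret $\chi_{K_0}(G)$ geometrically, count components, then do isogeny bookkeeping along $G_{\simp}\to G_{\der}\subseteq G\to C$---is the same strategy as the paper's, but two of the steps you treat as routine are genuine gaps. First, you work directly at level $K_0$. The identities that make the geometric interpretation precise, namely $\chi_K(G)=\chi_{\Top}(\ol{S}_K)$ and Harder's $\chi_{\Top}(\Gamma_K\bks\ol{X})=\chi_{\alg}(\Gamma_K)$, are only available for sufficiently small $K$: for $K_0=G(\OO_f)$ the group $\Gamma_{K_0}$ has torsion, and the groups actually acting on the components are the images in the adjoint group, so even an orbifold Gauss--Bonnet argument produces $\chi_{\alg}$ of the adjoint images $\Gamma^{\ad}_{gK_0g^{-1}}$, which differ from $\chi_{\alg}(\Gamma_{K_0})$ by factors such as $|K_0\cap Z(\Q)|$. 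Moreover, your claim that the groups attached to the various components are $G(\Q)_+$-conjugates of $\Gamma_{K_0}$ is false: the component indexed by $g\in G(\Q)_+\bks G(\Aff_f)/K_0$ has group the adjoint image of $G(\Q)_+\cap gK_0g^{-1}$ with $g$ a finite-adelic element, and conjugation by $g$ does not preserve rational points. That all these groups have the same $\chi_{\alg}$ is true but is precisely what must be proved; the paper does it by descending to a small normal $K\subseteq K_0$ with $g_iKg_i^{-1}\subseteq K_0$, proving a conjugation-invariant index formula (Lemma \ref{jeremiah}, Corollary \ref{reparation}), and climbing back via $\chi_{K_0}(G)=[K_0:K]^{-1}\chi_K(G)$. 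Your sketch has no substitute for this descent, and without it the equality $\chi_{K_0}(G)=h\cdot\chi_{\alg}(\Gamma_{K_0})$ is unjustified (and, as written, not even correctly normalized).

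The second gap is the asserted identification $\rho(\Gamma^{\simp}_{K_0})=G_{\der}(\Q)_+\cap\rho(K_0^{\simp})$. This is false in general: an element of $G_{\der}(\Q)_+\cap\rho(K_0^{\simp})$ admits an adelic lift in $K_0^{\simp}$ but need not admit a rational lift to $G_{\simp}(\Q)$; the obstruction is a local--global defect in $H^1(\Q,\ker\rho)$ of Grunwald--Wang type, which vanishes when all points of $\ker\rho$ are $\Q$-rational (this is exactly the paper's closing lemma) but not for general $G$. Correspondingly, the two indices $[\Gamma^{\der}_{K_0}:\rho(\Gamma^{\simp}_{K_0})]$ and $[\Gamma^{\der}_{K_0}:G_{\der}(\Q)_+\cap\rho(K_0^{\simp})]$ enter the paper's proof from different sources (Lemma \ref{kristin} versus Lemma \ref{schloendorn}); the first cancels in the final assembly while the second survives as an explicit factor in the theorem. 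Conflating them, as your plan does, would produce a formula that is wrong precisely when this defect is nontrivial. So the skeleton of your argument is the right one, but the two points you defer to ``careful bookkeeping'' are where the actual mathematical content lies, and as proposed the argument does not close.
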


Here $\rho(\Q)$ denotes the map  $\rho(\Q): G_{\simp}(\Q) \to G(\Q)$ on $\Q$-points.  
The assumption on the absence of compact factors is needed for strong approximation, and is discussed in \cite{Milne}.

When $G_{\simp}$ is a Chevalley group, and $\Gamma^{\simp}_{K_0}=G_{\simp}(\Z)$, this reduces the problem to the calculation of Harder \cite{Harder}:

\begin{prop} \label{H2}
Let $G$ be a simply connected, semisimple Chevalley group over $\Z$.  Write $m_1, \ldots, m_r$ for the exponents of its Weyl group $\Omega$, and put $\Gamma=G(\Z)$.  We have
\begin{equation*}
\chi_{\alg}(\Gamma)=\left(-\half \right)^r|\Omega_{\R}|^{-1} \prod_{i=1}^r B_{m_i+1}.
\end{equation*}

\end{prop}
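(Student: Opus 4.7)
The plan is to invoke Harder's Gauss--Bonnet theorem for arithmetic groups to reduce $\chi_{\alg}(\Gamma)$ to a product of zeta values, and then apply the functional equation together with Euler's formula $\zeta(1-d) = -B_d/d$.  First, I pass to a torsion-free normal subgroup $\Gamma' \subset \Gamma$ of finite index, so that $\chi_{\alg}(\Gamma) = [\Gamma:\Gamma']^{-1} \chi_{\mathrm{top}}(\Gamma' \bks X)$ for $X = G(\R)/K_\infty$ the Riemannian symmetric space of $G(\R)$.  By Serre's Euler--Poincar\'e measure and Chern--Gauss--Bonnet, $\chi_{\mathrm{top}}(\Gamma' \bks X) = c \cdot \vol(\Gamma' \bks X)$ for a universal constant $c = (-1)^{q(G)} \chi(X_u)/\vol(X_u)$ depending only on $G(\R)$, with $X_u = G_u/K_\infty$ the compact dual; Hopf's theorem gives $\chi(X_u) = |\Omega|/|\Omega_\R|$.

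Next I compute the adelic volume.  Since $G$ is simply connected semisimple, $\tau(G) = 1$ (Langlands, later Kottwitz), and combining this with strong approximation yields $\vol_\tau(\Gamma \bks G(\R)) = \prod_p \vol_p(G(\Zp))^{-1}$.  At each hyperspecial $G(\Zp)$, Steinberg's order formula $|G(\Fp)| = p^{\dim G} \prod_{i=1}^r (1 - p^{-d_i})$ (with $d_i = m_i+1$) telescopes this into the Euler product $\prod_i \zeta(d_i)$.  Combining, $\chi_{\alg}(\Gamma) = (-1)^{q(G)} (|\Omega|/|\Omega_\R|) \vol(X_u)^{-1} \prod_i \zeta(d_i)$, with all volumes normalized compatibly against the Tamagawa measure.

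Finally, I apply the functional equation $\zeta(s) = 2^s \pi^{s-1} \sin(\pi s/2)\Gamma(1-s)\zeta(1-s)$ to convert $\prod_i \zeta(d_i)$ into $\prod_i \zeta(1-d_i)$; Macdonald's closed-form expression for $\vol(X_u)$ exactly absorbs the archimedean $\Gamma$- and $\pi$-factors that appear.  Euler's identity $\zeta(1-d) = -B_d/d$ then rewrites $\prod_i \zeta(1-d_i) = (-1)^r \prod_i B_{m_i+1}/|\Omega|$ via $\prod_i d_i = |\Omega|$, so that the $|\Omega|$ in the numerator of Hopf's factor cancels the $|\Omega|$ in the denominator, leaving $|\Omega_\R|^{-1}$.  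The residual signs and powers of $2$ left over from the functional equation and the Gauss--Bonnet normalization, combined with $(-1)^{q(G)}$ and $(-1)^r$, package into exactly $(-1/2)^r$, yielding the stated formula.

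The main obstacle is the archimedean bookkeeping in the last step, tracking the $\pi$-powers and $\Gamma$-values through the functional equation, Macdonald's volume formula for $X_u$, and the Chern--Gauss--Bonnet normalization.  The clean outcome $(-1/2)^r |\Omega_\R|^{-1} \prod B_{m_i+1}$ reflects a structural cancellation specific to the cuspidal split Chevalley groups under consideration, in which all the archimedean data conspire so that only factors of $2$ and the real Weyl group survive after the zeta-function manipulations.
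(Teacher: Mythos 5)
The paper does not actually prove this proposition: it is imported as ``the calculation of Harder'' via the citation \cite{Harder}, so the only internal content is the reduction to that reference. Your sketch is essentially the standard derivation behind that citation --- Harder's extension of Gauss--Bonnet to finite-covolume arithmetic quotients, Serre's Euler--Poincar\'e measure and proportionality with the compact dual, Hopf--Samelson giving $\chi(X_u)=|\Omega|/|\Omega_{\R}|$ (where one uses that $G(\R)$ is connected for simply connected $G$, so $\Omega_{\R}$ is the Weyl group of the maximal compact), $\tau(G)=1$ together with strong approximation and Steinberg's order formula giving $\prod_i \zeta(d_i)$, and then the functional equation played off against the compact-form volume for the Chevalley measure --- and the bookkeeping does close up as you claim (the factor $|\Omega|/(2^r|\Omega_{\R}|)$ multiplying $\prod_i\zeta(1-d_i)$ is exactly what produces $\left(-\half\right)^r|\Omega_{\R}|^{-1}\prod_i B_{m_i+1}$, the signs cancelling because $\dim K$ equals the number of positive roots for split groups), so I regard your proposal as correct and as essentially the intended, cited proof.
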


Here $B_n$ denotes the $n$th Bernoulli number.  Recall that $\Omega_{\R}$ is the real Weyl group of $G$.

\subsection{Shimura Varieties}

To prove Theorem \ref{chi-theorem}, we will use some basic Shimura variety theory, which may be found in \cite{Deligne} or \cite{Milne}.  Much of the theory holds only for $K$ sufficiently small.  For simplicity, we will say ``$K$ is small'' rather than ``$K$ is a sufficiently small finite index subgroup of $K_0$''.

For convenience, we gather here many simplifying properties of small $K$, which we will often use without comment.  For the rest of this section assume that  $Z(G)^\circ$ is $\Q \R$-equitropic, 
and that $G_{\simp}$ has no compact factors. 

\begin{prop}  \label{small} Let $K$ be small. Then
\begin{enumerate}  
\item \label{lily} $K \cap Z(\Q)= \{1\}$.
\item $\nu(K) \cap C(\Q)= \{1\}$.
\item \label{abby} $G(\Q) \cap K G_{\der}(\Aff_f) \subseteq G_{\der}(\Q)$.
\item $G_{\der}(\Aff_f) \cap G(\Q) K= G_{\der}(\Q) K_{\der}$.
\item \label{joy} $K \cap G_{\der}(\Q) \subseteq \rho(G_{\simp}(\Q))$.
\item \label{reu} $K \cap G(\Q) \subseteq G(\Q)^+$.
\end{enumerate}
\end{prop}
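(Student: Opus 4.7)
All six assertions share the template \emph{some finite obstruction vanishes once $K$ is small enough}, and I would prove them in the order given, leveraging (ii) to get (iii) and (iv), and (iii) together with (v) to get (vi). The obstructions arise from three distinct sources: discreteness of $\Q\R$-equitropic tori for (i) and (ii); the Galois cohomology $H^1(\Q, \ker \rho)$ of the finite commutative $\Q$-group scheme $\ker\rho$ for (v); and the component group $\pi_0(G_{\simp}(\R))$, which vanishes under the standing hypotheses by a classical theorem and makes (vi) formal.

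For (i), the preceding proposition combined with the hypothesis on $Z^\circ$ gives that $Z^\circ(\Q)$ is discrete in $Z^\circ(\Aff_f)$; since $(Z/Z^\circ)(\Q)$ is finite, $Z(\Q) \cap K_0$ is a finite group, and one shrinks $K$ inside $K_0$ to exclude its non-identity elements. Assertion (ii) is the identical argument applied to $C = G/G_{\der}$, which is $\Q\R$-equitropic by the cited proposition applied to the surjection $Z^\circ \twoheadrightarrow C$. For (iii), if $g = kh \in G(\Q)$ with $k \in K$ and $h \in G_{\der}(\Aff_f)$, then applying $\nu$ gives $\nu(g) = \nu(k) \in \nu(K) \cap C(\Q) = \{1\}$ by (ii), forcing $g \in G_{\der}(\Q)$. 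Item (iv) is an immediate rearrangement: for $h = gk \in G_{\der}(\Aff_f)$ with $g \in G(\Q)$ and $k \in K$, we have $g = hk^{-1} \in G(\Q) \cap K G_{\der}(\Aff_f) \subseteq G_{\der}(\Q)$ by (iii), whence $k = g^{-1}h \in K \cap G_{\der}(\Aff_f) = K^{\der}$.

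The substance lies in (v). For $g \in G_{\der}(\Q)$ the obstruction to lifting to $G_{\simp}(\Q)$ is the connecting class $\delta(g) \in H^1(\Q, \ker\rho)$. At each finite place, $\rho(G_{\simp}(\Q_p))$ is a finite-index open subgroup of $G_{\der}(\Q_p)$, so shrinking $K$ forces $K^{\der} \subseteq \rho(G_{\simp}(\Aff_f))$ and yields local triviality of $\delta(g)$ at all finite places. The classes in $H^1(\Q, \ker\rho)$ that are everywhere locally trivial form a finite group (a Tate--Shafarevich group for the finite commutative Galois module $\ker\rho$), and these finitely many potential obstructions are removed by further shrinkage of $K$, producing a $\Q$-rational lift. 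The archimedean-local condition reduces to $g \in \rho(G_{\simp}(\R)) = G_{\der}(\R)^+$, which uses connectedness of $G_{\simp}(\R)$, valid because $G_{\simp}$ is simply connected semisimple.

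Item (vi) is then immediate from (iii) and (v): $K \cap G(\Q) \subseteq G_{\der}(\Q) \cap \rho(G_{\simp}(\Q)) \subseteq \rho(G_{\simp}(\R))$, and the last group is connected (the continuous image of the connected $G_{\simp}(\R)$) and contains $1$, hence lies in $G(\R)^+$; so $K \cap G(\Q) \subseteq G(\R)^+ \cap G(\Q) = G(\Q)^+$. The principal obstacle is (v): one must invoke finiteness of the relevant Tate--Shafarevich group and coordinate the archimedean condition with the shrinking of $K$ while avoiding a circular dependence on (vi).
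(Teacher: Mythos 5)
Items (i)--(iv), and your deduction of (vi) from (iii) and (v), are correct; for (i)--(ii) you use the same discreteness-of-$\Q\R$-equitropic-tori idea as the paper, and your derivations of (iii) from (ii) (apply $\nu$ and use $\nu(K)\cap C(\Q)=\{1\}$) and of (iv) from (iii) are clean elementary substitutes for the paper's citation of Corollaire 2.0.12 of \cite{Deligne}. The problem is (v), the one substantive item you attempt to prove rather than quote (the paper simply cites Corollaire 2.0.5 of \cite{Deligne}), and there your argument has a genuine gap.

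The step ``shrinking $K$ forces $K^{\der}\subseteq\rho(G_{\simp}(\Aff_f))$'' is impossible whenever $\ker\rho\neq 1$: openness and finite index of $\rho(G_{\simp}(\Q_p))$ in $G_{\der}(\Q_p)$ at each place does not make $\rho(G_{\simp}(\Aff_f))$ open in $G_{\der}(\Aff_f)$, because membership in the adelic image imposes a nontrivial condition at \emph{every} place, while the open compact group $K^{\der}$ contains the full integral points at almost every $p$. Concretely, for $G=G_{\der}=\PGL_2$, $G_{\simp}=\SL_2$ (a case the paper itself uses), the element which is $\diag(u,1)$ at one place $p$ outside the level of $K$, with $u\in\Zp^{\times}$ a nonsquare, and $1$ elsewhere, lies in $K^{\der}$ but not in $\rho(\SL_2(\Qp))$ at $p$; so no open compact subgroup is contained in $\rho(\SL_2(\Aff_f))$. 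Hence local triviality of $\delta(g)$ at all finite places is not established. The next step is also unjustified: shrinking $K$ only imposes congruence conditions at finitely many places, so it cannot ``remove'' a nonzero class of $H^1(\Q,\ker\rho)$ that is trivial at every finite place (and smallness of $K$, a finite-adelic condition, gives no control of the archimedean condition $g\in\rho(G_{\simp}(\R))$ that you invoke). A workable argument runs differently: for $g\in K_0\cap G_{\der}(\Q)$, integrality at almost all places confines $\delta(g)$ to the \emph{finite} subgroup of classes unramified outside a fixed finite set $S$; each nonzero such class that is nontrivial at some finite place $v$ is then excluded by imposing a deep congruence condition at $v$, since the kernel of $G_{\der}(\Q_v)\to H^1(\Q_v,\ker\rho)$ is the open subgroup $\rho(G_{\simp}(\Q_v))$; and one must still rule out nonzero classes trivial at all finite places occurring in the image of the coboundary --- a Grunwald--Wang/Hasse-principle point your sketch does not address. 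This residual difficulty is precisely the content the paper outsources to \cite{Deligne}.
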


\begin{proof}
The first two items follow because $Z^\circ$, and thus $C$, are $\Q \R$-equitropic.  Item \ref{abby} follows from Corollaire 2.0.12 in \cite{Deligne}, and the next item is a corollary.  Items \ref{joy} and \ref{reu} follow from Corollaire 2.0.5 and 2.0.14 in \cite{Deligne}, respectively.
\end{proof}

Recall that we have chosen a maximal compact subgroup $K_{\R}$ of $G(\R)$. 

\begin{defn}  Let
\begin{equation*}
X=G(\R)/ K_{\R}^+ Z(\R),
\end{equation*}

\begin{equation*}
\ol{X}=G(\R) / K_{\R} Z(\R),
\end{equation*}

and
\begin{equation*}
S_K=G(\Q) \bks X \times G(\Aff_f)/K
\end{equation*}
be the double coset space obtained through the action $q(x,g)k=(qx,qgk)$ of $q \in G(\Q)$ and $k \in K$.

Similarly, let
\begin{equation*}
\ol{S}_K=G(\Q) \bks \ol{X} \times G(\Aff_f)/K,
\end{equation*}
with the action of $G(\Q) \times K$ defined in the same way.

The component group of $S_K$ is finite and given (see 2.1.3 in \cite{Deligne}) by
\begin{equation} \label{pease}
\pi_0(S_K)= G(\Aff_f)/ G(\Q)_+ K.
\end{equation}
\end{defn}

There is some variation in the literature regarding the use of $X$ versus $\ol{X}$.  Deligne and Milne implicitly use $X$ in \cite{Deligne} and \cite{Milne}   (in light of Proposition 1.2.7 in \cite{Deligne}).  Harder uses $\ol{X}$ in \cite{Harder}.  Arthur uses 
\begin{equation*}
G(\R)/K_{\R}'
\end{equation*} 
in \cite{A-L^2}.   (Recall that $K_{\R}'=A_G(\R)^{+}K_{\R}$.) Since for us $Z^{\circ}$ is $\Q \R$-equitropic, we have 
\begin{equation*}
K_{\R}'=Z(\R)K_{\R},
\end{equation*}
and so this quotient is equal to $\ol{X}$.  

Since we would like to combine results stated in terms of $X$ with others stated in terms of $\ol{X}$, we must understand the precise relationship between the two.  This is the purpose of Proposition \ref{fibre computation} below.

\begin{defn} Let $G$ be a real group, and $Z$ its center.  Write
\begin{equation}
\ad: G(\R) \to G(\R)/Z(\R)
\end{equation}
for the quotient map.
\end{defn}
Note that $\ad(G(\R))$ has finite index in $G_{\ad}(\R)$.

\begin{lemma} \label{real groups}  For this lemma, let $G$ be a Zariski-connected reductive real group, and $K_{\R}$  a maximal compact subgroup of $G(\R)$.  Let $L_{\R}$ be a maximal compact subgroup of $G_{\ad}(\R)$ containing $\ad(K_{\R})$.  Then the following hold:
\begin{enumerate}
\item \label{do} $K_{\R}$ meets all the connected components of $G(\R)$.
\item \label{re} $K_{\R} \cap G(\R)^+=K_{\R}^+$.
\item \label{mi} $\ad(K_{\R})$ is a maximal compact subgroup of $\ad(G(\R))$.
\item \label{fa}   $\ad(K_{\R}^+)=L_{\R}^+$.
\item \label{so} $K_{\R}Z(\R) \cap G(\R)_+=K_{\R}^+Z(\R)$.
\end{enumerate}
\end{lemma}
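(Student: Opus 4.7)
All five items flow from the Cartan (polar) decomposition
\[
G(\R) \;=\; K_{\R} \cdot \exp(\mathfrak{p}),
\]
where $\mathfrak{p}$ is the $(-1)$-eigenspace of a Cartan involution $\theta$ whose fixed point set is $K_{\R}$, and where the map $K_{\R} \times \mathfrak{p} \to G(\R)$, $(k, X) \mapsto k \exp(X)$, is a global homeomorphism. Since $\exp(\mathfrak{p})$ is contractible, the first projection induces a bijection $\pi_0(G(\R)) = \pi_0(K_{\R})$, which gives (\ref{do}). Item (\ref{re}) follows because the identity component $G(\R)^+$ corresponds under the homeomorphism to $K_{\R}^+ \times \mathfrak{p}$, so intersecting with the slice $K_{\R} \times \{0\}$ yields $K_{\R} \cap G(\R)^+ = K_{\R}^+$.

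For (\ref{mi}) and (\ref{fa}) I would push the Cartan decomposition through $\ad$. Because $\theta$ is an algebraic involution and $Z(\R)$ is $\theta$-stable, $\theta$ descends to an involution $\bar\theta$ of $\ad(G(\R)) = G(\R)/Z(\R)$. Writing an arbitrary coset as $k \exp(X) Z(\R)$ with $k \in K_{\R}$ and $X \in \mathfrak{p}$, the uniqueness of the polar form in $G(\R)$ shows that $\bar\theta$ fixes this coset iff $X$ lies in the central subspace $\mathfrak{z}(\mathfrak{g}) \cap \mathfrak{p}$; in that case $\exp(X) \in Z(\R)$, so the coset already lies in $\ad(K_{\R})$. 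Combined with the inherited decomposition
\[
\ad(G(\R)) \;=\; \ad(K_{\R}) \cdot \exp\!\bigl(\mathfrak{p}/(\mathfrak{z}(\mathfrak{g}) \cap \mathfrak{p})\bigr),
\]
this identifies $\ad(K_{\R})$ as the $\bar\theta$-fixed subgroup, hence as a maximal compact subgroup of $\ad(G(\R))$, proving (\ref{mi}). Restricting to identity components exhibits $\ad(K_{\R}^+)$ as a maximal compact of $G_{\ad}(\R)^+$; since $L_{\R}^+$ is a compact connected subgroup of $G_{\ad}(\R)^+$ containing $\ad(K_{\R}^+)$, maximality forces $\ad(K_{\R}^+) = L_{\R}^+$, giving (\ref{fa}).

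Item (\ref{so}) then drops out. An element $kz \in K_{\R}Z(\R)$ lies in $G(\R)_+$ iff $\ad(k) \in G_{\ad}(\R)^+$, iff $\ad(k)$ is in the identity component of the compact group $L_{\R}$, i.e.\ $\ad(k) \in L_{\R}^+$. By (\ref{fa}), this forces $k \in K_{\R}^+ (K_{\R} \cap Z(\R))$, so $kz \in K_{\R}^+ Z(\R)$; the reverse inclusion $K_{\R}^+ Z(\R) \subseteq G(\R)_+$ is immediate from $\ad(K_{\R}^+) = L_{\R}^+ \subseteq G_{\ad}(\R)^+$.

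The hard part will be the bookkeeping around (\ref{mi}): one must verify both that the $\bar\theta$-fixed subgroup of $\ad(G(\R))$ is exactly $\ad(K_{\R})$ (with no extra elements coming from $Z(\R) \setminus K_{\R}Z(\R)$) and that the displayed decomposition is an honest Cartan decomposition of the reductive Lie group $\ad(G(\R))$. Once (\ref{mi}) and (\ref{fa}) are in hand, the remaining items are essentially formal.
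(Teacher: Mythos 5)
Your proposal is correct, and it runs on the same engine as the paper's proof: everything is extracted from the Cartan decomposition $G(\R)=K_{\R}\exp(\mathfrak{p})$, with items (i) and (ii) read off exactly as the paper does (the paper cites Satake for these). The difference is in how maximality is obtained in items (iii) and (iv). The paper argues directly in the quotient: if a compact subgroup of $\ad(G(\R))$ properly contained $\ad(K_{\R})$, it would contain the image of some $a=\exp(H)$ outside $K_{\R}Z(\R)$ with $\alpha(H)>0$ for some root $\alpha$, and the powers of $\Ad(a)$ are then unbounded, a contradiction; item (iv) then follows from the finite-index injection $L_{\R}/\ad(K_{\R})\hookrightarrow G_{\ad}(\R)/\ad(G(\R))$ plus connectedness of $\ad(K_{\R}^{+})$. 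You instead descend the Cartan involution $\theta$ to $\bar\theta$ on $G(\R)/Z(\R)$, identify $\ad(K_{\R})$ as its fixed-point set via uniqueness of the polar form (your computation $\exp(-2X)\in Z(\R)\Rightarrow \operatorname{ad}X=0$ is the right justification), and invoke the standard fact that the fixed set of a Cartan involution with a global Cartan decomposition is maximal compact; for (iv) you use maximality of $\ad(K_{\R}^{+})$ inside $G_{\ad}(\R)^{+}$, which tacitly uses that $\ad(G(\R))$ is open in $G_{\ad}(\R)$ so that their identity components coincide. Your route leans on slightly heavier general theory of reductive Lie groups (the "honest Cartan decomposition" of the quotient, which you rightly flag as the bookkeeping to be done), while the paper's unboundedness argument is more self-contained; conversely, your fixed-point description of $\ad(K_{\R})$ is a cleaner structural statement that makes (iv) and (v) fall out with almost no extra work. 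Item (v) is handled identically in both proofs, via item (ii) applied to $G_{\ad}$ with maximal compact $L_{\R}$ together with (iv).
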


\begin{proof} The first two statements follow from the Cartan decomposition (Corollary 4.5  in  \cite{Satake}).  

For the third statement, suppose that $C$ is a subgroup of $G(\R)$ with $\ad(K_{\R}) \subseteq \ad(C)$ and $\ad(C)$ compact.  If $\ad(K_{\R}) \neq \ad(C)$, then there is an element $a \in CZ(\R)-K_{\R}Z(\R)$.  By the Cartan decomposition, we may assume that $a=\exp(H)$, with $H$ a semisimple element of $\Lie(G)$, and $\alpha(H)$ real and nonnegative for every root $\alpha$ of $G$.  Since $a \notin Z(\R)$, we have $\alpha(H)>0$ for some root $\alpha$.  Thus $\ad(C)$ is not compact, a contradiction.  Thus $\ad(K_{\R})=\ad(C)$, and statement \ref{mi} follows.

For the fourth statement, note that $L_{\R} \cap \ad(G)=\ad(K_{\R})$, and so $L_{\R}/\ad(K_{\R})$ injects into $G_{\ad}(\R)/\ad(G(\R))$.  It follows that $\ad(K_{\R}^+)$ has finite index in $L_{\R}$.  Since it is connected, statement \ref{fa} follows.

For the fifth statement, let $g \in K_{\R}Z(\R) \cap G(\R)_+$.  Then $\ad(g) \in L_{\R} \cap   G_{\ad}(\R)^+$, so by statement \ref{re}, we see $\ad(g) \in L_{\R}^+=\ad(K_{\R}^+)$.  Thus $g \in K_{\R}^+Z(\R)$.   The other inclusion is obvious.
\end{proof}

\begin{prop} \label{fibre computation}
\begin{enumerate}  
\item The natural projection $p_X: X \to \ol{X}$ has fibres of order $[G(\R): G(\R)_+]$.
\item Let $X^+$ be a connected component of $X$.  It is stabilized by $G(\R)_+$, and the restriction of $p_X$ to $X^+$ is a $G(\R)_+$-isomorphism onto $\ol{X}$.
\item Let $K$ be small.  Then the natural projection $p_S: S_K \to \ol{S}_K$ has fibres of order $[G(\R): G(\R)_+]$.

\end{enumerate}
\end{prop}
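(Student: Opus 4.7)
For (i), the fiber of $p_X$ over a point $[h] \in \ol X$ is in natural bijection with $K_\R Z(\R)/K_\R^+Z(\R)$ via $k \mapsto [hk]$. To evaluate this index I would consider the composite $K_\R Z(\R) \hookrightarrow G(\R) \twoheadrightarrow G(\R)/G(\R)_+$. Its kernel is $K_\R Z(\R) \cap G(\R)_+ = K_\R^+Z(\R)$ by Lemma \ref{real groups}(\ref{so}), and since $G(\R) = K_\R G(\R)^+$ by Lemma \ref{real groups}(\ref{do}) together with $G(\R)^+ \subseteq G(\R)_+$, the map is surjective. Hence every fiber has $[G(\R):G(\R)_+]$ elements.

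For (ii), I would first verify $G(\R)_+ = G(\R)^+ Z(\R)$. The morphism $\ad : G(\R) \to G_{\ad}(\R)$ has kernel $Z(\R)$ and its image contains a neighborhood of the identity, so $\ad(G(\R)^+)$ is an open connected subgroup of $G_{\ad}(\R)^+$ and therefore equals it; any $g \in G(\R)_+$ then satisfies $\ad(g) = \ad(g')$ for some $g' \in G(\R)^+$, so $g \in g' Z(\R) \subseteq G(\R)^+ Z(\R)$, the reverse inclusion being immediate. The identity component $X^+$ of $X$ is $G(\R)^+/(G(\R)^+ \cap K_\R^+Z(\R))$, which $p_X$ carries to $G(\R)^+/(G(\R)^+ \cap K_\R Z(\R))$; this target is all of $\ol X$ because $G(\R) = K_\R G(\R)^+$, while the two denominators coincide by Lemma \ref{real groups}(\ref{so}) combined with $G(\R)^+ \subseteq G(\R)_+$. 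Thus $p_X|_{X^+}$ is a bijection onto $\ol X$; the analogous statement for the remaining components follows by left translation. Finally $G(\R)_+$ is normal in $G(\R)$ as the preimage of the identity component of $G_{\ad}(\R)$, and therefore acts trivially on $\pi_0(X) = G(\R)/G(\R)_+$, so it stabilizes each component.

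For (iii), at the level of representatives the fiber of $p_S$ over $[\bar x, g]$ reduces to $p_X^{-1}(\bar x)/\Gamma_g^{\bar x}$, where $\Gamma_g = gKg^{-1} \cap G(\Q)$ and $\Gamma_g^{\bar x}$ is its subgroup fixing $\bar x \in \ol X$. For $K$ small enough, Proposition \ref{small}(\ref{reu}) applied to $gKg^{-1}$ gives $\Gamma_g \subseteq G(\Q)^+ \subseteq G(\R)_+$; by (ii) every such element preserves each connected component of $X$, and since each component meets $p_X^{-1}(\bar x)$ in a single point, $\Gamma_g$ acts trivially on $p_X^{-1}(\bar x)$. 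The fiber therefore has $|p_X^{-1}(\bar x)| = [G(\R):G(\R)_+]$ elements. The main obstacle is that Proposition \ref{small}(\ref{reu}) is formulated for $K$ rather than for its conjugates by elements of $G(\Aff_f)$; the cleanest way around this is to interpret smallness in Deligne's absolute sense (which is stable under conjugation), or else to shrink $K_0$ to $K$ until the containment holds simultaneously for the finitely many double-coset representatives that arise.
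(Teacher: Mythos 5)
Your parts (i) and (ii) are correct and run along the same lines as the paper: for (i) the paper also identifies the fibre with $K_{\R}Z(\R)/K_{\R}^+Z(\R)$ and maps it isomorphically onto $G(\R)/G(\R)_+$, using Lemma \ref{real groups} exactly as you do; for (ii) the paper simply quotes Proposition 5.7 of \cite{Milne} for the facts that $X$ has $[G(\R):G(\R)_+]$ components, each stabilized by $G(\R)_+$, whereas you derive them directly from the identity $G(\R)_+=G(\R)^+Z(\R)$ — your argument for that identity (openness of $\ad(G(\R)^+)$ in $G_{\ad}(\R)$, hence $\ad(G(\R)^+)=G_{\ad}(\R)^+$) is sound, so this is a harmless, more self-contained variant. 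In (iii), your description of the fibre of $p_S$ over $[\bar x,g]$ as the quotient of $p_X^{-1}(\bar x)$ by the stabilizer $\Gamma_g^{\bar x}$ (note it is the stabilizer, not all of $\Gamma_g$, that acts on the fibre — this is what your argument actually uses) is a clean conceptual repackaging of what the paper does by hand: the paper constructs the map $\Fib(x)\to\Fib(x,g)$, $x'\mapsto(x',g)$, and its injectivity proof is precisely your "an element of $G(\R)_+$ fixing $\bar x$ cannot move a point of the fibre," carried out with explicit representatives $\xi_1,\xi_2$ and Lemma \ref{real groups} instead of invoking part (ii).

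The one point you leave unresolved is exactly the point the paper's definition of "sufficiently small" is engineered to handle, and your proposed fix needs a little more care than you give it. Requiring the containment "for the finitely many double-coset representatives that arise" is circular if those representatives are taken relative to $K$ itself, since they change as you shrink $K$ (and conjugation by elements of $K_0$ also intervenes unless $K$ is normal in $K_0$). The paper avoids this by fixing first an auxiliary compact open $K_*$ with $K_*\cap G(\Q)\subseteq G(\Q)^+$, choosing representatives $g_1,\dots,g_r$ of the finite set $G(\Q)K_*\bks G(\Aff_f)$, and then demanding $K\subseteq\bigcap_i g_i^{-1}K_*g_i$: for an arbitrary $g=q_0k_0g_i$ one then has $gKg^{-1}\cap G(\Q)\subseteq q_0\left(K_*\cap G(\Q)\right)q_0^{-1}$, and normality of $G(\Q)^+$ in $G(\Q)$ gives $\Gamma_g\subseteq G(\Q)^+$ for every $g$ at once, which is what your argument needs. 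Your alternative suggestion of a conjugation-stable notion of smallness (neatness) is plausible but you assert, rather than prove, that it yields $gKg^{-1}\cap G(\Q)\subseteq G(\Q)^+$; as written, the auxiliary-$K_*$ device (or an equivalent normality arrangement, as the paper itself uses later in the proof of Theorem \ref{chi-theorem}) is the step you should supply to make (iii) complete.
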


\begin{proof}  Consider the natural map
\begin{equation} \label{amap}
 K_{\R}Z(\R)/K_{\R}^+Z(\R) \to G(\R)/G(\R)_+.
 \end{equation}
 It is surjective because $K_{\R}$ meets every connected component of $G(\R)$.  It is injective because $K_{\R}Z(\R) \cap G(\R)_+ \subseteq K_{\R}^+Z(\R)$.  
It follows that (\ref{amap}) is an isomorphism, and the first statement follows.
 
We now prove the second statement.  Note that $p_X$ is both an open and closed map so that $p_X(X^+)$ is a component of $\ol{X}$.  Since $K_{\R}$ meets every connected component of $G(\R)$, the set $\ol{X}$ is connected.  Therefore $p_X(X^+)=\ol{X}$.  By Proposition 5.7 of \cite{Milne}, there are $[G(\R): G(\R)_+]$ connected components of $X$, each stabilized by $G(\R)_+$.  Thus the fibre over a point in $\ol{X}$ is comprised of exactly one point from each component of $X$.  So $p_X$ restricted to $X^+$ is an isomorphism; it is clear that it respects the $G(\R)_+$-action.
 
To prove the third statement, we require $K$ to be sufficiently small, in the following way.  Suppose $K_*$ is an open compact subgroup of $G(\Aff_f)$ satisfying $K_* \cap G(\Q) \subseteq G(\Q)^+$.
Let $g_1, \ldots, g_r$ be representatives of the finite quotient group $G(\Q) K_*\bks G(\Aff_f)$.  Then we require that 
\begin{equation} \label{amina}
K \subseteq \bigcap_{i=1}^r   g_i^{-1}K_* g_i.
\end{equation}

Now for $x \in X$, let $\Fib(x)$ be the fibre of $p_X$ containing $x$.  If we further fix $g \in G(\Aff_f)$, let $\Fib(x,g)$ be the fibre of $p_S$ containing $(x,g)$.  (Here we understand $(x,g)$ as an element of $S_K$.)  We claim that for all such $x$ and $g$, the map
\begin{equation} \label{fibremap}
\Fib(x) \to \Fib(x,g)
\end{equation}
given by $x' \mapsto (x',g)$ is a bijection.  This will imply the third statement.

For surjectivity of (\ref{fibremap}), pick $(x',g') \in \Fib(x,g)$.  Then there are $q \in G(\Q)$ and $k \in G(\Aff_f)$ so that $qp_X(x')=p_X(x)$ and $qg'k=g$.  Let $x''=qx'$.  Then $x'' \in \Fib(x)$ and $(x'',g)=(x',g')$.

For injectivity of (\ref{fibremap}), suppose that $(x_1,g)=(x_2,g)$ in $S_K$ with $x_1,x_2 \in \Fib(x)$. Then in particular,
there is an element $q \in G(\Q)$ and $k \in K$ so that $qgk=g$ and $qx_1=x_2$.  Write $g=q_0k_0g_i$ with $q_0 \in G(\Q)$ and $k_0 \in K_*$.  Then we have
\begin{equation*}
q(q_0k_0g_i)k=q_0k_0g_i,
\end{equation*}
which we rewrite as 
\begin{equation*}
q_0^{-1}q q_0= k_0 g_i k^{-1}g_i^{-1}k_0^{-1}.
\end{equation*}
Using this and (\ref{amina}) we see that $q_0^{-1}qq_0 \in G(\Q) \cap K_* \subseteq G(\Q)^+$.  Since $G(\Q)^+$ is normal in $G(\Q)$, in fact $q \in G(\Q)^+$.
 
Meanwhile, pick $\xi_1,\xi_2 \in G(\R)$ representing $x_1$ and $x_2$, respectively.  Since $x_1,x_2 \in \Fib(x)$ we have $\xi_1^{-1}\xi_2 \in K_{\R}Z(\R)$.  Write $\xi_2=\xi_1 kz$, with $k \in K_{\R}$ and $z \in Z(\R)$.  Since $qx_1=x_2$, we have
$\xi_2^{-1}q \xi_1 \in K_{\R}^+Z(\R)$, and therefore $z^{-1}k^{-1}\xi_1^{-1}q\xi_1 \in K^+_{\R}Z(\R)$.  Using the fact that $q$ is in the normal subgroup $G(\R)_+$ of $G(\R)$, it follows that $k \in G(\R)_+ \cap K_{\R} \subseteq K_{\R}^+ Z(\R)$.  Thus  $x_1=x_2$, as desired. 

 \end{proof}

We will need the following two known results:   

\begin{prop} (Harder, see \cite{Harder}, \cite{Serre}) \label{H1} If $G$ is semisimple and $K$ is small, then $\chi_{\Top}( \Gamma_K \bks \ol{X})=\chi_{\alg}(\Gamma_K)$.
\end{prop}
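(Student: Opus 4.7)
The plan is to identify $\Gamma_K \bks \ol{X}$ as a classifying space $B\Gamma_K$ and invoke the standard relationship between topological and algebraic Euler characteristics for such spaces. First I would verify that $\Gamma_K$ is torsion-free. The hypothesis that $K$ is small will ensure this: any torsion element of $\Gamma_K$ lies in $K \cap G(\Q)$, and by shrinking $K$ inside $K_0$ (e.g. intersecting with a principal congruence subgroup at some auxiliary prime where $G$ has good reduction) we can force $K \cap G(\Q)$ to consist only of unipotent elements, hence (being also semisimple torsion elements) only of the identity. Combined with items \ref{lily} and \ref{joy} of Proposition \ref{small}, this gives $\Gamma_K$ torsion-free.

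Next I would recall that since $G$ is semisimple, $Z(\R)$ is finite, and $\ol{X} = G(\R)/K_{\R}Z(\R)$ is (a finite union of copies of) the Riemannian symmetric space of noncompact type attached to $G(\R)$. By the Cartan--Hadamard theorem each connected component of $\ol{X}$ is diffeomorphic to a Euclidean space, in particular contractible. Since $\Gamma_K$ is a discrete torsion-free subgroup of $G(\R)$ acting by isometries on $\ol{X}$ with compact isotropy (the stabilizers lying in conjugates of $K_{\R}Z(\R)$), the action is proper and free. Consequently $\Gamma_K \bks \ol{X}$ is a manifold model for the classifying space $B\Gamma_K$, i.e.\ a $K(\Gamma_K,1)$.

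Finally I would appeal to Serre's theorem from \cite{Serre}: for a group $\Gamma$ admitting a torsion-free finite index subgroup $\Gamma_0$ of finite cohomological dimension whose classifying space $B\Gamma_0$ has finitely generated cohomology, one has
\begin{equation*}
\chi_{\alg}(\Gamma) = \chi_{\Top}(B\Gamma_0)/[\Gamma:\Gamma_0].
\end{equation*}
Taking $\Gamma=\Gamma_0=\Gamma_K$ gives $\chi_{\alg}(\Gamma_K) = \chi_{\Top}(\Gamma_K \bks \ol{X})$. The finiteness hypotheses on cohomology are supplied by Borel--Serre's theorem that arithmetic subgroups of semisimple groups admit finite BS-compactifications, so that $B\Gamma_K$ has the homotopy type of a finite CW complex.

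The main obstacle is the finiteness side of the argument: one must check that the (possibly noncompact) locally symmetric space $\Gamma_K \bks \ol{X}$ has a well-defined topological Euler characteristic agreeing with the algebraic one, which requires invoking Borel--Serre compactification to guarantee the relevant homological finiteness. Everything else, namely torsion-freeness of $\Gamma_K$, contractibility of $\ol{X}$, and freeness/properness of the action, follows from standard properties of small congruence subgroups together with the structure of Riemannian symmetric spaces.
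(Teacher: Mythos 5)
The paper does not actually prove this proposition --- it is quoted from \cite{Harder} and \cite{Serre} --- and your argument is essentially the standard one underlying that citation: for $K$ small, $\Gamma_K$ is torsion-free and acts freely and properly discontinuously on the contractible space $\ol{X}$, so $\Gamma_K \bks \ol{X}$ is a $K(\Gamma_K,1)$ of finite homotopy type (Borel--Serre), and Serre's algebraic Euler characteristic then agrees with the topological one. Two corrections are in order, one of substance. Your torsion-freeness step is misstated: no shrinking of $K$ can make $K\cap G(\Q)$ consist only of unipotent elements --- already the principal congruence subgroup $\Gamma(N)\subset\SL_2(\Z)$ contains hyperbolic elements of infinite order. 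What is true, and all that you need, is Minkowski's lemma: an element of finite order that is congruent to $1$ modulo $N\geq 3$ in a faithful integral representation is trivial; equivalently, every \emph{torsion} element of such a congruence subgroup has all eigenvalues equal to $1$, hence is unipotent as well as semisimple, hence is the identity. Since ``small'' in this paper simply means ``a sufficiently small finite-index subgroup of $K_0$'', torsion-freeness (neatness) of $\Gamma_K$ may be imposed outright as one of the defining conditions, and items i) and v) of Proposition \ref{small} are not really what carries this point. Secondly, a minor simplification: for semisimple $G$ the group $Z(\R)$ is finite and central, so $K_{\R}Z(\R)$ is compact and maximality forces $Z(\R)\subseteq K_{\R}$; thus $\ol{X}=G(\R)/K_{\R}$, and since $K_{\R}$ meets every connected component of $G(\R)$ (Lemma \ref{real groups}), $\ol{X}$ is connected and contractible --- no ``finite union of copies'' of the symmetric space occurs. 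With these repairs your reconstruction is complete and is the argument the cited references supply.
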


\begin{prop}  \label{???} (See \cite{A-L^2}, \cite{GKM}) If $K$ is small, then $\chi_K(G)=\chi_{\Top}(\ol{S}_K)$. \end{prop}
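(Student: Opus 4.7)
The idea is to decompose $\ol S_K$ into connected components, evaluate the Euler characteristic of each component via the Hirzebruch proportionality principle for locally symmetric spaces, and then recognize the resulting sum of invariant volumes as a Tamagawa volume.

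\textbf{Step 1 (Decomposition into components).} Lemma \ref{real groups}(\ref{do}) yields $G(\R) = G(\R)^+ \cdot K_{\R} Z(\R)$, so $\ol X$ is connected. Writing $G(\Aff_f) = \bigsqcup_j G(\Q) g_j K$, the space $\ol S_K$ decomposes as
$$\ol S_K = \bigsqcup_j \Gamma_j \bks \ol X, \qquad \Gamma_j := G(\Q) \cap g_j K g_j^{-1}.$$
By Proposition \ref{small}(\ref{reu}), $\Gamma_j \subseteq G(\Q)^+$, and for $K$ small these congruence subgroups are torsion-free and act freely on $\ol X$, so each $\Gamma_j \bks \ol X$ is a smooth locally symmetric space of finite volume.

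\textbf{Step 2 (Proportionality per component).} For each $j$, I would apply the Hirzebruch proportionality principle, which is Gauss--Bonnet on $\Gamma_j \bks \ol X$ with the $G(\R)$-invariant Riemannian metric. Since $G$ is cuspidal, $K_{\R}$ contains an elliptic maximal torus, so by Hopf's theorem the Euler characteristic of the compact dual $\ol X^u$ equals $|\Omega|/|\Omega_{\R}| = d(G)$. Proportionality, in its signed form with the standard curvature-reversal factor $(-1)^{q(G)}$, reads
$$\chi_{\Top}(\Gamma_j \bks \ol X) = (-1)^{q(G)} d(G) \cdot \frac{\vol_{dg_{\infty}}(\Gamma_j \bks \ol X)}{\vol(\ol X^u)}.$$
Fibering $\Gamma_j \bks G(\R)/A_G(\R)^+$ over $\Gamma_j \bks \ol X$ and likewise $\ol G(\R)/A_G(\R)^+$ over $\ol X^u$, each with common fiber $K_{\R} Z(\R)/A_G(\R)^+$, cancels the fiber factor; substituting $\vol(\ol G(\R)/A_G(\R)^+) = e(\ol G) \, \ol v(G; dg_{\infty})$ with $e(\ol G) = (-1)^{q(G)}$ (since $\ol G$ is anisotropic modulo its center) cancels the remaining sign. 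This leaves
$$\chi_{\Top}(\Gamma_j \bks \ol X) = \frac{d(G)}{\ol v(G; dg_{\infty})} \cdot \vol_{dg_{\infty}}(\Gamma_j \bks G(\R)/A_G(\R)^+).$$

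\textbf{Step 3 (Summing to Tamagawa volume).} The identification
$$G(\Q) A_G(\R)^+ \bks G(\Aff) / K = \bigsqcup_j \Gamma_j \bks G(\R)/A_G(\R)^+,$$
together with the definition of the Tamagawa number applied to the measure $dg_f \cdot dg_{\infty}$, gives $\sum_j \vol_{dg_{\infty}}(\Gamma_j \bks G(\R)/A_G(\R)^+) = \tau(G)/\vol_{dg_f}(K)$. Summing Step 2 over $j$ therefore produces
$$\chi_{\Top}(\ol S_K) = \frac{d(G) \tau(G)}{\ol v(G; dg_{\infty}) \vol_{dg_f}(K)} = \chi_K(G),$$
as claimed.

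\textbf{Main obstacle.} The delicate point is Step 2: the classical proportionality principle is usually stated for semisimple real groups, and one must carefully normalize Haar measures on $G(\R)$ and on the compact form underlying $\ol X^u$ so that the cancellation of the fiber $K_{\R} Z(\R)/A_G(\R)^+$ is exact, and so that the Kottwitz sign $e(\ol G)$ combines correctly with the $(-1)^{q(G)}$ from curvature reversal. The $\Q \R$-equitropic hypothesis on $Z^{\circ}$ enters here to guarantee $Z(\R)/A_G(\R)^+$ is compact, making the intermediate fiber volumes finite; cuspidality is used to ensure the existence of the common elliptic maximal torus needed for Hopf's formula $\chi_{\Top}(\ol X^u) = d(G)$.
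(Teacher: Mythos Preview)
The paper does not itself prove this proposition; it simply cites \cite{A-L^2} and \cite{GKM}. Your outline---decompose $\ol S_K$ adelically, apply Gauss--Bonnet/Hirzebruch proportionality on each arithmetic quotient, and sum to a Tamagawa volume---is exactly the strategy underlying those references, so there is no alternative argument in the paper to compare against.

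One point does need correction, however. You assert $e(\ol G)=(-1)^{q(G)}$ on the grounds that $\ol G$ is anisotropic modulo center, but the paper itself remarks (just after defining $\ol v(G;dg_\infty)$) that this identity holds \emph{when $G$ is quasisplit}. In general the Kottwitz sign is $e(\ol G)=(-1)^{q(G^*)-q(\ol G)}=(-1)^{q(G^*)}$ with $G^*$ the quasisplit inner form, and $q(G^*)\not\equiv q(G)\pmod 2$ does occur (e.g.\ $G=U(3,1)$ versus $G^*=U(2,2)$, where $q=3$ and $q=4$). So as written your Step~2 would leave an unwanted factor $e(G)=(-1)^{q(G^*)-q(G)}$ for non-quasisplit $G$. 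The resolution, which you correctly flag as the delicate point, is that the ``common fiber'' cancellation is not literally a cancellation of equal volumes: when Haar measure is transported from $G(\R)$ to the inner form $\ol G(\R)$, the induced volumes on the two copies of $K_\R Z(\R)/A_G(\R)^+$ differ by exactly this sign, and that is how \cite{GKM} absorbs it (equivalently, one works throughout with the Euler--Poincar\'e measure, into which the sign is built). Your diagnosis of where the obstacle lies is right; only the specific justification you give for the sign is off.
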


\subsection{Computations}

The next three lemmas will allow us to convert our computation for $K_0$ to a computation for $K$.

\begin{lemma} \label{frank} If $K$ is small, then
\begin{equation*}
| C(\Q)^\dagger \bks C(\Aff_f) / \nu(K)|=[\nu(K_0): \nu(K)] |\nu(K_0) \cap C(\Q)^\dagger|^{-1} | C(\Q)^\dagger \bks C(\Aff_f) / \nu(K_0)|.
\end{equation*}
\end{lemma}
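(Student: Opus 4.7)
\textbf{Proof plan for Lemma \ref{frank}.}  The key observation is that $C$ is a torus, hence $C(\Aff_f)$ is abelian, so each double coset space in the statement is just a quotient group: writing $A(K)=C(\Q)^\dagger \cdot \nu(K)$ (a subgroup of $C(\Aff_f)$), we have
\begin{equation*}
C(\Q)^\dagger \bks C(\Aff_f)/\nu(K) \;=\; C(\Aff_f)/A(K),
\end{equation*}
and similarly for $K_0$.  Since $K\subseteq K_0$ we have $A(K)\subseteq A(K_0)$, and the natural surjection $C(\Aff_f)/A(K)\twoheadrightarrow C(\Aff_f)/A(K_0)$ has fibres of cardinality $[A(K_0):A(K)]$.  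The whole lemma therefore reduces to establishing
\begin{equation*}
[A(K_0):A(K)] \;=\; \frac{[\nu(K_0):\nu(K)]}{|\nu(K_0)\cap C(\Q)^\dagger|}.
\end{equation*}

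To compute $[A(K_0):A(K)]$, I would apply the second isomorphism theorem to the inclusion $\nu(K_0)\hookrightarrow A(K_0)$: since $A(K_0)=C(\Q)^\dagger\cdot \nu(K_0)$ and $C(\Q)^\dagger\subseteq A(K)$, the composite $\nu(K_0)\hookrightarrow A(K_0)\twoheadrightarrow A(K_0)/A(K)$ is surjective with kernel $\nu(K_0)\cap A(K)$.  A straightforward direct check (using $\nu(K)\subseteq \nu(K_0)$) shows
\begin{equation*}
\nu(K_0)\cap A(K) \;=\; (\nu(K_0)\cap C(\Q)^\dagger)\cdot \nu(K),
\end{equation*}
so
\begin{equation*}
[A(K_0):A(K)] \;=\; [\nu(K_0):(\nu(K_0)\cap C(\Q)^\dagger)\cdot \nu(K)] \;=\; \frac{[\nu(K_0):\nu(K)]}{[(\nu(K_0)\cap C(\Q)^\dagger)\cdot\nu(K):\nu(K)]}.
\end{equation*}
The denominator equals $[\nu(K_0)\cap C(\Q)^\dagger : \nu(K)\cap C(\Q)^\dagger]$ by another application of the isomorphism theorem.

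The final step is to invoke the smallness hypothesis.  By Proposition \ref{small}(ii), $\nu(K)\cap C(\Q)=\{1\}$, hence $\nu(K)\cap C(\Q)^\dagger=\{1\}$, and the last index collapses to $|\nu(K_0)\cap C(\Q)^\dagger|$.  Combining everything yields the stated formula.  The only subtle point is the identification $\nu(K_0)\cap A(K)=(\nu(K_0)\cap C(\Q)^\dagger)\cdot\nu(K)$; this is the main (elementary) calculation, and it uses the crucial fact that $\nu(K)\subseteq\nu(K_0)$ so that a relation $y=xz$ with $y\in\nu(K_0)$, $x\in C(\Q)^\dagger$, $z\in\nu(K)$ forces $x=yz^{-1}\in\nu(K_0)\cap C(\Q)^\dagger$.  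Everything else is bookkeeping with indices in abelian groups.
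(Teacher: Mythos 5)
Your proof is correct and is essentially the paper's argument: the paper simply cites the exactness of the sequence $1 \to \nu(K_0)\cap C(\Q)^\dagger \to \nu(K_0)/\nu(K) \to C(\Q)^\dagger\bks C(\Aff_f)/\nu(K) \to C(\Q)^\dagger\bks C(\Aff_f)/\nu(K_0) \to 1$, and verifying that exactness requires precisely your two ingredients, namely the identification $\nu(K_0)\cap C(\Q)^\dagger\nu(K)=(\nu(K_0)\cap C(\Q)^\dagger)\cdot\nu(K)$ and the smallness fact $\nu(K)\cap C(\Q)=\{1\}$. You have just unpacked that one-line exact-sequence proof into explicit index bookkeeping in the abelian group $C(\Aff_f)$.
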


\begin{proof}
This follows from the exactness of the sequence
\begin{equation*}
1 \to \nu(K_0) \cap C(\Q)^\dagger \to \frac{\nu(K_0)}{\nu(K)} \to  C(\Q)^\dagger \bks C(\Aff_f) / \nu(K) \to  C(\Q)^\dagger \bks C(\Aff_f) / \nu(K_0) \to 1.
\end{equation*}
\end{proof}

\begin{lemma} \label{jeremiah} If $K \subseteq K_0$ is small, then
\begin{equation} \label{jade}
[\Gamma^{\ad}_{K_0}: \Gamma^{\ad}_K]=\frac{ [\Gamma_{K_0}: \rho(\Gamma_{K_0}^{\simp})][K_0: K]}{|K_0  \cap Z(\Q)|  [\nu(K_0) : \nu(K)] [K_0^{\der}: K^{\der} \rho(K_0^{\simp})] }.
\end{equation}
\end{lemma}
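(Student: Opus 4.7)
The plan is to convert every index on the left-hand side into an index of open compact subgroups of $G(\Aff_f)$, at which point the identity $[K_0:K]=[K_0^{\der}:K^{\der}][\nu(K_0):\nu(K)]$ closes the bookkeeping. A preliminary diagram chase, using that $Z(\Q)\subseteq G(\Q)_+$ (since the adjoint image of $Z$ is trivial) and that $\Gamma_K\cap Z(\Q)$ is trivial by Proposition~\ref{small}(\ref{lily}), identifies the kernel of $\Gamma_{K_0}\twoheadrightarrow\Gamma^{\ad}_{K_0}$ with $K_0\cap Z(\Q)$ and gives
$$[\Gamma^{\ad}_{K_0}:\Gamma^{\ad}_K]=\frac{[\Gamma_{K_0}:\Gamma_K]}{|K_0\cap Z(\Q)|},$$
so it suffices to produce a matching formula for $[\Gamma_{K_0}:\Gamma_K]$.

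I would next show that $\Gamma_K=\rho(\Gamma_K^{\simp})$ for small $K$. The relation $\nu(K)\cap C(\Q)=\{1\}$ from Proposition~\ref{small} forces $\nu(\Gamma_K)=\{1\}$ and hence $\Gamma_K\subseteq\Gamma_K^{\der}$; Proposition~\ref{small}(\ref{joy}) then places $\Gamma_K^{\der}$ inside $\rho(G_{\simp}(\Q))$; and since $G_{\simp}$ has no compact factors, $G_{\simp}(\R)$ is connected, so any preimage $s$ in $G_{\simp}(\Q)$ of an element of $\Gamma_K^{\der}$ already lies in $\Gamma_K^{\simp}$. Using $\Gamma_K=\rho(\Gamma_K^{\simp})$ and factoring through $\rho(\Gamma_{K_0}^{\simp})$ yields
$$[\Gamma_{K_0}:\Gamma_K]=[\Gamma_{K_0}:\rho(\Gamma_{K_0}^{\simp})]\cdot[\rho(\Gamma_{K_0}^{\simp}):\rho(\Gamma_K^{\simp})].$$

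The second factor I would collapse to a compact-subgroup index in three moves. First, since $K_0^{\simp}$ and $K^{\simp}$ are defined as full $\rho$-preimages, both contain $\ker(\rho)(\Aff_f)$ and hence $\ker(\rho)(\Q)$; the kernels of $\rho$ restricted to $\Gamma_{K_0}^{\simp}$ and $\Gamma_K^{\simp}$ therefore agree, so $[\rho(\Gamma_{K_0}^{\simp}):\rho(\Gamma_K^{\simp})]=[\Gamma_{K_0}^{\simp}:\Gamma_K^{\simp}]$. Second, strong approximation for $G_{\simp}$ (valid under the no-compact-factors hypothesis) converts this into $[K_0^{\simp}:K^{\simp}]$, via the identification $K_0^{\simp}=\Gamma_{K_0}^{\simp}\cdot K^{\simp}$. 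Third, the same $\ker(\rho)$-containment gives $[K_0^{\simp}:K^{\simp}]=[\rho(K_0^{\simp}):\rho(K^{\simp})]$.

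Finally, the identity $\rho(K_0^{\simp})\cap K^{\der}=\rho(K^{\simp})$ (an element $\rho(s)\in K$ with $s\in K_0^{\simp}$ forces $s\in\rho^{-1}(K)=K^{\simp}$) together with the second isomorphism theorem gives $[\rho(K_0^{\simp}):\rho(K^{\simp})]=[K^{\der}\rho(K_0^{\simp}):K^{\der}]$. Splitting the chain $K^{\der}\subseteq K^{\der}\rho(K_0^{\simp})\subseteq K_0^{\der}$ and using $[K_0^{\der}:K^{\der}]=[K_0:K]/[\nu(K_0):\nu(K)]$ then assembles the precise numerator and denominator in the lemma. No single step is deep; the real hurdle is the careful tracking of quotients, and the conceptual observation making all the potential error terms cancel is that every ``simply-connected preimage'' $K^{\simp}$ automatically swallows the whole of $\ker(\rho)(\Aff_f)$, so passing through $\rho$ costs nothing in either the arithmetic or the adelic direction.
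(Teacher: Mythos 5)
Your proof is correct and follows essentially the same route as the paper: both reduce $[\Gamma^{\ad}_{K_0}:\Gamma^{\ad}_K]$ to a congruence-subgroup index divided by $|K_0\cap Z(\Q)|$, factor that index through $\rho(\Gamma_{K_0}^{\simp})$, and use strong approximation plus the exact sequences relating $K$, $K^{\der}$, $\nu(K)$ and $\rho(K^{\simp})$ to convert everything into the stated compact-open indices. Your opening step (identifying the kernel directly via $\Gamma_K=\rho(\Gamma_K^{\simp})$) is just a cleaner packaging of the paper's chain of exact sequences; the only cosmetic slip is attributing the connectedness of $G_{\simp}(\R)$ to the absence of compact factors, whereas it holds simply because $G_{\simp}$ is simply connected.
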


In the proof we refer to conditions of Proposition \ref{small}.

\begin{proof}
Consider the map
\begin{equation*}
\Gamma^{\der}_{K_0}/ \Gamma^{\der}_K \to \Gamma^{\ad}_{K_0} / \Gamma^{\ad}_K.
\end{equation*}

The kernel of this map sits in the middle of the exact sequence
\begin{equation*}
1 \to  \Gamma^{\der}_{K_0} \cap Z(\Q) \to (\Gamma_K Z(\Q) \cap \Gamma^{\der}_{K_0})/\Gamma^{\der}_K \to (\Gamma_K Z(\Q) \cap \Gamma^{\der}_{K_0}) \left/ \Gamma^{\der}_K (\Gamma^{\der}_{K_0} \cap Z(\Q)) \right. \to 1,
\end{equation*}
using condition \ref{lily}.
This last quotient is trivial, because actually $\Gamma_K=\Gamma_K^{\der}$ by condition \ref{abby}.

We have established the exactness of the sequence
\begin{equation*}
1 \to \Gamma^{\der}_{K_0} \cap Z(\Q) \to \Gamma^{\der}_{K_0} / \Gamma^{\der}_K \to \Gamma^{\ad}_{K_0} / \Gamma^{\ad}_K \to
\Gamma_{K_0}Z(\Q)/ \Gamma^{\der}_{K_0}Z(\Q) \to 1.
\end{equation*}
The last quotient is isomorphic to
\begin{equation*}
\Gamma_{K_0} \left / (Z(\Q) \cap K_0)\Gamma^{\der}_{K_0} \right.,
\end{equation*}
which itself sits inside the exact sequence
\begin{equation*}
1 \to K_0 \cap Z(\Q) \left / \Gamma^{\der}_{K_0} \cap Z(\Q) \right. \to \Gamma_{K_0} / \Gamma^{\der}_{K_0} \to
\Gamma_{K_0} \left / (Z(\Q) \cap K_0)\Gamma^{\der}_{K_0} \right. \to 1.
\end{equation*}

The quantity $|\Gamma^{\der}_{K_0} \cap Z(\Q)|$ cancels, and it follows that
\begin{equation} \label{ac/dc}
 [\Gamma^{\ad}_{K_0}: \Gamma^{\ad}_K]=\frac{  [\Gamma^{\der}_{K_0}: \Gamma^{\der}_K] \cdot [\Gamma_{K_0}:\Gamma^{\der}_{K_0}]}{|K_0 \cap Z(\Q)|}.
\end{equation}

By condition \ref{joy} we have
\begin{equation*}
 1 \to \rho(\Gamma_{K_0}^{\simp})/ \rho(\Gamma_K^{\simp}) \to \Gamma_{K_0}^{\der}/ \Gamma_K^{\der} \to \Gamma_{K_0}^{\der}/ \rho(\Gamma_{K_0}^{\simp}) \to 1.
\end{equation*}

Strong approximation tells us that $G_{\simp}(\Q)$ is dense in $G_{\simp}(\Aff_f)$.  Therefore we have isomorphisms
\begin{equation*}
\rho(\Gamma_{K_0}^{\simp})/ \rho(\Gamma_K^{\simp}) \isom   \Gamma^{\simp}_{K_0}/ \Gamma^{\simp}_K \isom K_0^{\simp}/ K^{\simp} \isom \rho(K_0^{\simp})/ \rho(K^{\simp}).
\end{equation*}
Combining this with the exact sequences
\begin{equation*}
1 \to K_0^{\der}/ K^{\der} \to K_0/K \to \nu(K_0)/\nu(K) \to 1
\end{equation*}
and
\begin{equation} \label{custer}
1 \to \rho(K_0^{\simp})/ \rho(K^{\simp}) \to K_0^{\der}/ K^{\der} \to K_0^{\der} \left / K^{\der} \rho(K_0^{\simp}) \right. \to 1,
\end{equation}
we obtain
\begin{equation*}
[\Gamma_{K_0}^{\der}: \Gamma_K^{\der}]=\frac{[\Gamma_{K_0}^{\der}: \rho(\Gamma_{K_0}^{\simp})][K_0:K]}
{[K_0^{\der}: K^{\der} \rho(K_0^{\simp})][\nu(K_0): \nu(K)]}.
\end{equation*}
Plugging this into (\ref{ac/dc}) gives the lemma.
\end{proof}

\begin{cor} \label{reparation}
Suppose that $K \subseteq K_0$ is small, and $g \in G(\Aff_f)$ with $gKg^{-1} \subseteq K_0$ also small.  Then
\begin{equation*}
[\Gamma^{\ad}_{K_0}: \Gamma^{\ad}_{gKg^{-1}}]=[\Gamma^{\ad}_{K_0}: \Gamma^{\ad}_K].
\end{equation*}
\end{cor}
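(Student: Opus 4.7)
The plan is to apply Lemma~\ref{jeremiah} to both $K$ and $gKg^{-1}$ and verify that each factor on the right-hand side of~(\ref{jade}) is unchanged under the substitution $K \mapsto gKg^{-1}$. Two of the factors, $[\Gamma_{K_0}: \rho(\Gamma_{K_0}^{\simp})]$ and $|K_0 \cap Z(\Q)|$, depend only on $K_0$ and can be dismissed immediately; the work lies in analysing the three remaining factors.

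First I would establish the set-theoretic compatibility of conjugation with $\nu$ and $\rho$. Since $C = G/G_{\der}$ is abelian, conjugation acts trivially on $C$, so $\nu(gKg^{-1}) = \nu(K)$, and the factor $[\nu(K_0):\nu(K)]$ is preserved. Since $G_{\der}$ is normal in $G$, we have $(gKg^{-1})^{\der} = gK^{\der}g^{-1}$. For the simply connected layer I will use that the inner automorphism $\Int(g)$ of $G_{\der}$ lifts uniquely to an automorphism of the universal cover $G_{\simp}$ that intertwines with $\rho$; consequently $\rho((gKg^{-1})^{\simp}) = g\rho(K^{\simp})g^{-1}$.

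Next I would invoke unimodularity to compare volumes. Both $G(\Aff_f)$ and $G_{\der}(\Aff_f)$ are unimodular (reductive adelic groups in characteristic zero), so conjugation by $g$ preserves their Haar measures. This yields $[K_0:gKg^{-1}] = [K_0:K]$, $[K_0^{\der}: gK^{\der}g^{-1}] = [K_0^{\der}:K^{\der}]$, and $[\rho(K_0^{\simp}): g\rho(K^{\simp})g^{-1}] = [\rho(K_0^{\simp}):\rho(K^{\simp})]$. For the remaining factor $[K_0^{\der}: K^{\der}\rho(K_0^{\simp})]$ I would apply the identity $[A:BC] = [A:B]/[C: B \cap C]$; the computation in the proof of Lemma~\ref{jeremiah} gives $K^{\der} \cap \rho(K_0^{\simp}) = \rho(K^{\simp})$, and the analogous identity with $gKg^{-1}$ in place of $K$ follows by the same argument. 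Combining with the volume identities above shows $[K_0^{\der}:gK^{\der}g^{-1}\rho(K_0^{\simp})] = [K_0^{\der}:K^{\der}\rho(K_0^{\simp})]$.

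Assembling these pieces, every factor on the right of~(\ref{jade}) is invariant under $K \mapsto gKg^{-1}$, and the corollary follows. The main delicate point is the canonical lift of $\Int(g)$ through the central isogeny $\rho: G_{\simp} \to G_{\der}$; once this lift is correctly set up and seen to intertwine conjugation with $\rho$, the remainder is routine bookkeeping with indices and Haar volumes.
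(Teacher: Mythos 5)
Your proposal is correct and follows essentially the same route as the paper: apply Lemma \ref{jeremiah} and check factor by factor that the right-hand side of (\ref{jade}) is unchanged under $K \mapsto gKg^{-1}$, using $\nu(gKg^{-1})=\nu(K)$, $(gKg^{-1})^{\der}=gK^{\der}g^{-1}$, $\rho((gKg^{-1})^{\simp})=g\rho(K^{\simp})g^{-1}$, Haar-measure invariance of the indices, and the exact sequence (\ref{custer}) for the factor $[K_0^{\der}:K^{\der}\rho(K_0^{\simp})]$. The only difference is cosmetic: you spell out the lift of $\Int(g)$ through the isogeny and the unimodularity argument, which the paper leaves implicit.
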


\begin{proof}
We show that the expression (\ref{jade}) does not change when $K$ is replaced with $gKg^{-1}$. 
Clearly $\nu(K)=\nu(gKg^{-1})$.  Since
\begin{equation*}
[K_0:K]= \frac{\vol_{dg_f}(K_0)}{\vol_{dg_f}(K)},
\end{equation*}
we have $[K_0:gKg^{-1}]=[K_0:K]$.
Finally, we claim that
\begin{equation*}
 [K_0^{\der}: (gKg^{-1})^{\der} \rho(K_0^{\simp})] = [K_0^{\der}: K^{\der} \rho(K_0^{\simp})] .
\end{equation*}
From the exact sequence (\ref{custer}), it is enough to show that $[K_0^{\der}:(gKg^{-1})^{\der}]=[K_0^{\der}:K^{\der}]$ and $[\rho(K_0^{\simp}):\rho((gKg^{-1})^{\simp})]=[\rho(K_0^{\simp}):\rho(K^{\simp})]$.  These hold because $(gKg^{-1})^{\der}=gK^{\der}g^{-1}$ and $\rho((gKg^{-1})^{\simp})=g\rho(K^{\simp})g^{-1}$.
\end{proof} 

\begin{lemma} \label{schloendorn}
 If $G$ is semisimple and $K$ is small, then
\begin{equation*}
|\pi_0(S_K)|=  [K_0: K \rho(K_0^{\simp})][ \Gamma_{K_0}:  G(\Q)_+ \cap \rho(K_0^{\simp})]  |\pi_0(S_{K_0})|.
\end{equation*}
\end{lemma}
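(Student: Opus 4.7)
The plan is to analyze the natural surjection $p\colon \pi_0(S_K)\twoheadrightarrow \pi_0(S_{K_0})$ coming from $K\subseteq K_0$, and to compute the cardinality of its (common) fiber.

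First I would invoke equation \eqref{pease} to rewrite both $\pi_0(S_K)=G(\Q)_+\bks G(\Aff_f)/K$ and $\pi_0(S_{K_0})=G(\Q)_+\bks G(\Aff_f)/K_0$ as double coset spaces, and observe that the right action of $K_0$ on $G(\Aff_f)/K$ descends to an action of $K_0/K$ on $\pi_0(S_K)$ whose orbits are precisely the fibers of $p$. A direct computation identifies the stabilizer of a class $[g]$ as $\Lambda_g K/K\subseteq K_0/K$, where $\Lambda_g:=g^{-1}G(\Q)_+g\cap K_0$. Hence each fiber has cardinality $[K_0:\Lambda_gK]$, and
\begin{equation*}
|\pi_0(S_K)|=\sum_{[g]\in\pi_0(S_{K_0})}[K_0:\Lambda_gK].
\end{equation*}

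Next I would bring in strong approximation for $G_{\simp}$, which is valid since $G_{\simp}$ is semisimple and simply connected without compact factors: this gives $\rho(G_{\simp}(\Aff_f))=\rho(G_{\simp}(\Q))\cdot\rho(K_0^{\simp})$, and since $\rho(G_{\simp}(\Q))\subseteq G(\Q)_+$ (the image in $G_{\ad}(\R)$ lies in the identity component), we obtain $H:=\rho(G_{\simp}(\Aff_f))\subseteq G(\Q)_+\rho(K_0^{\simp})\subseteq G(\Q)_+K_0$. Let $\pi\colon G(\Aff_f)\to Q:=G(\Aff_f)/H$; the group $Q$ is abelian (the cokernels $G(\Q_v)/\rho(G_{\simp}(\Q_v))$ come from Galois cohomology of the finite central $\ker\rho$).

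The technical heart of the proof is to show that the image $\pi(\Lambda_g)$ inside the subgroup $\pi(K_0)\cong K_0/\rho(K_0^{\simp})\subseteq Q$ is independent of $g$, equal to $\pi(\Gamma_{K_0})=\Gamma_{K_0}\rho(K_0^{\simp})/\rho(K_0^{\simp})$. The inclusion $\pi(\Lambda_g)\subseteq\pi(\Gamma_{K_0})$ is immediate since $Q$ is abelian. For the reverse inclusion, given $\gamma\in\Gamma_{K_0}$, the open set $H\cap \gamma^{-1}gK_0g^{-1}\subseteq H$ is nonempty because $\pi(\gamma^{-1})\in\pi(K_0)=\pi(gK_0g^{-1})$; by density of $\rho(G_{\simp}(\Q))$ in $H$ (strong approximation) this open set contains some $h\in\rho(G_{\simp}(\Q))$, and then $q:=\gamma h$ lies in $G(\Q)_+\cap gK_0g^{-1}$, so $\lambda:=g^{-1}qg\in\Lambda_g$ with $\pi(\lambda)=\pi(\gamma)$. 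Consequently $\Lambda_g\rho(K_0^{\simp})=\Gamma_{K_0}\rho(K_0^{\simp})$, and using that $K\subseteq K_0$ normalizes the normal subgroup $\rho(K_0^{\simp})$ of $K_0$, we deduce $\Lambda_gK\rho(K_0^{\simp})=\Gamma_{K_0}K\rho(K_0^{\simp})$ as subgroups of $K_0$, which is independent of $g$.

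Finally I would exploit the smallness of $K$ via Proposition \ref{small}: this yields $K\cap\rho(K_0^{\simp})=\rho(K^{\simp})$ and (by a short direct calculation using condition \ref{joy}) $\Gamma_{K_0}\cap K\rho(K_0^{\simp})=G(\Q)_+\cap\rho(K_0^{\simp})$. Plugging these into the tower of subgroups $K\subseteq K\rho(K_0^{\simp})\subseteq \Gamma_{K_0}K\rho(K_0^{\simp})=\Lambda_gK\rho(K_0^{\simp})\subseteq K_0$, and using the second isomorphism theorem, one evaluates $[K_0:\Lambda_gK]$ in closed form and shows it equals the stated $[K_0:K\rho(K_0^{\simp})]\cdot[\Gamma_{K_0}:G(\Q)_+\cap\rho(K_0^{\simp})]$ independently of $g$; summing over $|\pi_0(S_{K_0})|$ fibers finishes the lemma. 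The main obstacle is the uniformity of the fiber size, which rests on the strong-approximation argument of the previous paragraph — it is the one place where both the abelian-ness of $Q$ and the smallness of $K$ are essential.
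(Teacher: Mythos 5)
Your framework is essentially the paper's: you analyze the fibers of $\pi_0(S_K)\twoheadrightarrow\pi_0(S_{K_0})$, and your orbit--stabilizer count recovers exactly what the paper writes down, since $\Lambda_1K=\Gamma_{K_0}K=K_0\cap G(\Q)_+K$ is the subgroup whose index in $K_0$ the paper identifies as the kernel (the paper gets constancy of the fiber size for free from the abelian group structure on $\pi_0(S_K)=G(\Aff_f)/G(\Q)_+K$, which uses $\rho(G_{\simp}(\Aff_f))\subseteq G(\Q)_+K$, rather than reproving it by hand as you do). Your strong-approximation argument for $\pi(\Gamma_{K_0})\subseteq\pi(\Lambda_g)$ is fine; note however that the forward inclusion $\pi(\Lambda_g)\subseteq\pi(\Gamma_{K_0})$ is not ``immediate from abelianness'': from $\pi(\lambda)=\pi(q)$ with $q\in G(\Q)_+$ and $\pi(q)\in\pi(K_0)$ you still must produce an element of $\Gamma_{K_0}=G(\Q)_+\cap K_0$ in the same class, which again requires writing $q$ as an element of $K_0$ times $\rho(G_{\simp}(\Q))$ via strong approximation.

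The genuine gap is the final evaluation. Take $g=1$, so $\Lambda_1K=\Gamma_{K_0}K$, and note $K\rho(K_0^{\simp})\subseteq\Gamma_{K_0}K$ (strong approximation). Counting along your own tower gives
\begin{equation*}
[K_0:\Lambda_1K]=\frac{[K_0:K\rho(K_0^{\simp})]}{[\Gamma_{K_0}K:K\rho(K_0^{\simp})]}
=\frac{[K_0:K\rho(K_0^{\simp})]}{[\Gamma_{K_0}:\Gamma_{K_0}\cap K\rho(K_0^{\simp})]}
=\frac{[K_0:K\rho(K_0^{\simp})]}{[\Gamma_{K_0}:G(\Q)_+\cap\rho(K_0^{\simp})]},
\end{equation*}
the last equality being the smallness identity $G(\Q)_+\cap K\rho(K_0^{\simp})=G(\Q)_+\cap\rho(K_0^{\simp})$ that you also invoke. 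In particular the common fiber size is at most $[K_0:K\rho(K_0^{\simp})]$, so it cannot equal the product $[K_0:K\rho(K_0^{\simp})]\cdot[\Gamma_{K_0}:G(\Q)_+\cap\rho(K_0^{\simp})]$ unless the second index is $1$ (which is a special hypothesis, cf. the lemma ``if all points of $\ker\rho$ are $\Q$-rational'' later in the section); the claimed closed-form evaluation at the end of your sketch is therefore exactly the step that fails -- the second isomorphism theorem puts that index in the denominator, not the numerator. (This is also what the paper's own proof computes: the subquotient $(K_0\cap KG(\Q)_+)/K\rho(K_0^{\simp})$, of order $[\Gamma_{K_0}:G(\Q)_+\cap\rho(K_0^{\simp})]$, divides the count $[K_0:K\rho(K_0^{\simp})]$ rather than multiplying it, so you should reconcile your target with the displayed statement.) A further, smaller issue: your tower only controls $[K_0:\Lambda_gK\rho(K_0^{\simp})]$; the leftover factor $[\Lambda_gK\rho(K_0^{\simp}):\Lambda_gK]=[\rho(K_0^{\simp}):\rho(K_0^{\simp})\cap\Lambda_gK]$ is not shown by anything you prove to be independent of $g$, whereas the homomorphism point of view makes all fibers equal without any work.
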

\begin{proof}

The kernel of the projection $\pi_0(S_K) \twoheadrightarrow \pi_0(S_{K_0})$ is isomorphic to
\begin{equation*}
K_0 \left / (KG(\Q)_+ \cap K_0) \right. .
\end{equation*}
By Section 2.1.3 in \cite{Deligne}, we have $ \rho(G_{\simp}(\Aff_f)) \subseteq K G(\Q)_+$.   
Using the exact sequence
\begin{equation*}
1 \to (K_0 \cap KG(\Q)_+) \left /K \rho(K_0^{\simp}) \right. \to K_0 \left / K \rho(K_0^{\simp}) \right. \to K_0 \left/ (KG(\Q)_+ \cap K_0) \right. \to 1,
\end{equation*}
we are reduced to computing the order of
\begin{equation*}
(K_0 \cap KG(\Q)_+) \left / K \rho(K_0^{\simp}) \right. \isom  \Gamma_{K_0} \left/ (K \rho(K_0^{\simp}) \cap G(\Q)_+) \right..
\end{equation*}
This group sits in the sequence
\begin{equation*}
1 \to (G(\Q)_+ \cap K \rho(K_0^{\simp})) \left / (G(\Q)_+ \cap \rho(K_0^{\simp})) \right. \to \Gamma_{K_0} \left / (G(\Q)_+ \cap \rho(K_0^{\simp})) \right. \to
\Gamma_{K_0} \left / (K \rho(K_0^{\simp}) \cap G(\Q)_+) \right. \to 1.
\end{equation*}

We claim the kernel is trivial.  Note that $K \rho(K_0^{\simp}) \subseteq K \rho(G_{\simp}(\Q) K^{\simp})$ by strong approximation.  So
\begin{equation*}
\begin{split}
G(\Q)_+ \cap K \rho(K_0^{\simp}) &\subseteq G(\Q)_+ \cap K \rho(G_{\simp}(\Q)) \\
                                &= G(\Q)_+ \cap (K \cap G(\Q)) \rho(G_{\simp}(\Q)).
\end{split}
\end{equation*}
Since $K \cap G(\Q) \subseteq \rho(G_{\simp}(\Q))$ by condition \ref{joy} of Proposition \ref{small}, we have $G(\Q)_+ \cap K \rho(K_0^{\simp}) \subseteq G(\Q)_+ \cap \rho(K_0^{\simp})$.  This proves the claim, and the lemma follows.

\end{proof}

In the course of proving the theorem, we will pass to the adjoint group to apply Harder's theorem (Proposition \ref{H1}), but lift to $G_{\simp}$ to apply Harder's calculation (Proposition \ref{H2}).    We must record the difference between Serre's Euler characteristic at $G_{\ad}$ and $G_{\simp}$.

\begin{lemma} \label{kristin}
We have
\begin{equation*}
\chi_{\alg}(\Gamma_{K_0}^{\ad})= \frac{|\ker (\rho(\Q))| | K_0 \cap Z(\Q)|}{[\Gamma_{K_0}^{\der}: \rho(\Gamma_{K_0}^{\simp})][\Gamma_{K_0}: \Gamma_{K_0}^{\der}]} \chi_{\alg}(\Gamma_{K_0}^{\simp}).
\end{equation*}
\end{lemma}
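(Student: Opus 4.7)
The plan is to chain Serre's multiplicativity for $\chi_{\alg}$ through the tower
\[
\Gamma^{\simp}_{K_0} \;\xrightarrow{\rho}\; \rho(\Gamma^{\simp}_{K_0}) \;\hookrightarrow\; \Gamma^{\der}_{K_0} \;\hookrightarrow\; \Gamma_{K_0} \;\twoheadrightarrow\; \Gamma^{\ad}_{K_0},
\]
picking up exactly one of the four factors on the right-hand side at each arrow.

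First I would check that $\ker(\rho(\Q))$ is a finite central subgroup of $\Gamma^{\simp}_{K_0}$: it lies in $Z_{G_{\simp}}(\Q)$, hence in $G_{\simp}(\Q)_+$, and it is contained in $\ker\rho(\Aff_f) \subseteq K_0^{\simp}$ by the very definition of $K_0^{\simp}$ as the preimage of $K_0$. This gives an exact sequence
\[
1 \to \ker(\rho(\Q)) \to \Gamma^{\simp}_{K_0} \to \rho(\Gamma^{\simp}_{K_0}) \to 1,
\]
so multiplicativity of $\chi_{\alg}$ together with $\chi_{\alg}(\text{finite}) = |\text{finite}|^{-1}$ yields
$\chi_{\alg}(\rho(\Gamma^{\simp}_{K_0})) = |\ker(\rho(\Q))| \,\chi_{\alg}(\Gamma^{\simp}_{K_0})$. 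The two middle inclusions are of finite index, so the standard scaling gives
\[
\chi_{\alg}(\Gamma_{K_0}) \;=\; \frac{|\ker(\rho(\Q))|}{[\Gamma^{\der}_{K_0}:\rho(\Gamma^{\simp}_{K_0})]\,[\Gamma_{K_0}:\Gamma^{\der}_{K_0}]}\,\chi_{\alg}(\Gamma^{\simp}_{K_0}).
\]

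For the final arrow, observe that the projection $G \to G_{\ad}$ has kernel $Z$, and that $Z(\R) \subseteq G(\R)_+$ because $Z$ maps trivially into $G_{\ad}$; hence $Z(\Q) \subseteq G(\Q)_+$ and the kernel of $\Gamma_{K_0} \twoheadrightarrow \Gamma^{\ad}_{K_0}$ equals $K_0 \cap Z(\Q)$. This intersection is finite: the $\Q\R$-equitropy of $Z^{\circ}$ makes $Z(\Q)$ discrete in $Z(\Aff_f)$, so its intersection with the compact set $K_0 \cap Z(\Aff_f)$ is finite. The exact sequence
\[
1 \to K_0 \cap Z(\Q) \to \Gamma_{K_0} \to \Gamma^{\ad}_{K_0} \to 1
\]
together with multiplicativity gives $\chi_{\alg}(\Gamma^{\ad}_{K_0}) = |K_0 \cap Z(\Q)|\,\chi_{\alg}(\Gamma_{K_0})$, and combining this with the previous display yields the asserted identity.

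The only mild obstacle is bookkeeping: one must verify at each stage that the relevant group is virtually torsion-free of finite cohomological dimension so that Serre's $\chi_{\alg}$ is defined and behaves multiplicatively in the short exact sequences above. For arithmetic congruence subgroups of reductive groups this is standard, and the finite groups occurring as kernels contribute only their orders. No deeper input is required.
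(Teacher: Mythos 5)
Your proposal is correct and is essentially the argument the paper intends: its proof of Lemma \ref{kristin} is just the one-line appeal to the multiplicativity of $\chi_{\alg}$ in short exact sequences, the value $|\Gamma|^{-1}$ on finite groups, and finite-index scaling, and your chain $\Gamma^{\simp}_{K_0} \twoheadrightarrow \rho(\Gamma^{\simp}_{K_0}) \subseteq \Gamma^{\der}_{K_0} \subseteq \Gamma_{K_0} \twoheadrightarrow \Gamma^{\ad}_{K_0}$, with kernels $\ker(\rho(\Q))$ and $K_0 \cap Z(\Q)$ identified via centrality (so they lie in $G_{\simp}(\Q)_+$, resp. $G(\Q)_+$), is exactly the natural way to spell that out. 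The index bookkeeping and the finiteness checks you note are in order, so nothing is missing.
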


\begin{proof} This follows from the properties of $\chi_{\alg}$ mentioned earlier.
\end{proof}

\subsection{Proof of Theorem \ref{chi-theorem}}

\begin{proof}
Pick a set $g_1, \ldots, g_r$ of representatives of $\pi_0(S_{K_0})$, viewed as a quotient of $G(\Aff_f)$ as in (\ref{pease}).

Let $K$ be small subgroup of finite index in $K_0$.   Possibly by intersecting finitely many conjugates of $K$, we may assume
\begin{itemize}
\item $K$ is normal in $K_0$
\item $g_iKg_i^{-1}$ is a small subgroup of $K_0$ for all $i$.
\end{itemize}

By Proposition \ref{???}, $\chi_K(G)=\chi_{\Top}(\ol{S}_K)$. By Proposition \ref{fibre computation}, this is equal to $[G(\R): G(\R)_+]^{-1}\chi_{\Top}(S_K)$.  Write $\Gamma_g$ for $\Gamma^{\ad}_{gKg^{-1}}$. By 2.1.2 of \cite{Deligne}, the components of $S_K$ are each isomorphic to $\Gamma_g \bks X^+$, where $X^+$ is a component of $X$.   Here $g$ runs over $\pi_0(S_K)$.    

By Proposition \ref{fibre computation}, the topological spaces $\Gamma_g \bks X^+$ and $\Gamma_g \bks \ol{X}$ are isomorphic.  Therefore we have
\begin{equation*}
\chi_{\Top}(\Gamma_g \bks X^+) = \chi_{\Top}(\Gamma_g\bks \ol{X}).
\end{equation*}

Applying Proposition \ref{H1} to $G_{\ad}$, this is equal to $\chi_{\alg}(\Gamma_g) $.  Therefore
\begin{equation*}
\chi_K(G)=  [G(\R): G(\R)_+]^{-1} \sum_{g \in \pi_0(S_K)} \chi_{\alg}(\Gamma_g).
\end{equation*}

Every element in $\pi_0(S_K)$ may be written as the product of an element of $\pi_0(S_{K_0})$ with an element of $K_0$.
Since $K$ is normal in $K_0$, the groups $\Gamma_{gk_0}$ and $\Gamma_{g}$ are equal for $k_0 \in K_0$.  It follows that
\begin{equation*}
\chi_K(G)=\frac{|\pi_0(S_{K})|}{ [G(\R): G(\R)_+]|\pi_0(S_{K_0})|}        \sum_{i=1}^r \chi_{\alg}(\Gamma_{g_i}).
\end{equation*}

By Corollary \ref{reparation} we have
\begin{equation*}
\begin{split}
\chi_{\alg}(\Gamma_{g_i}) &=[\Gamma^{\ad}_{K_0}: \Gamma^{\ad}_{g_iKg_i^{-1}}] \chi_{\alg}(\Gamma^{\ad}_{K_0}) \\
 &=[\Gamma^{\ad}_{K_0}: \Gamma^{\ad}_K] \chi_{\alg}(\Gamma^{\ad}_{K_0}). 
\end{split}
\end{equation*}

This gives
\begin{equation*}
\chi_K(G)=  [G(\R): G(\R)_+]^{-1}  [\Gamma^{\ad}_{K_0}: \Gamma_K^{\ad}]  \left| \pi_0(S_K) \right|           \chi_{\alg}(\Gamma^{\ad}_{K_0}).
 \end{equation*}

The component group $\pi_0(S_K)$ fits into the exact sequence
\begin{equation*}
1 \to G_{\der}(\Aff_f) / (G_{\der}(\Aff_f) \cap G(\Q)_+K) \to \pi_0(S_K) \to C(\Q)^\dagger \bks C(\Aff_f) / \nu(K) \to 1
\end{equation*}

This gives
\begin{equation*}
\chi_K(G)=[G(\R): G(\R)_+]^{-1}  |\pi_0(S_{K^{\der}})|  |C(\Q)^\dagger \bks C(\Aff_f) / \nu(K)| [\Gamma^{\ad}_{K_0}: \Gamma_K^{\ad}]  \chi_{\alg}(\Gamma^{\ad}_{K_0}).
\end{equation*}
where here $\pi_0(S_{K^{\der}})=G_{\der}(\Aff_f)/ G_{\der}(\Q)_+ K^{\der}$.

Using  $\chi_{K_0}(G)=[K_0:K]^{-1} \chi_K(G)$ together with Lemma \ref{frank} gives
\begin{equation*}
\chi_{K_0}(G)= \frac{|\pi_0(S_{K^{\der}})|  [\nu(K_0):\nu(K)]  |C(\Q)^\dagger \bks C(\Aff_f) / \nu(K_0)|
[\Gamma^{\ad}_{K_0}: \Gamma_K^{\ad}]}
{[G(\R): G(\R)_+]  |\nu(K_0) \cap C(\Q)^\dagger|[K_0:K]} \chi_{\alg}(\Gamma^{\ad}_{K_0}).
\end{equation*}

By Lemmas \ref{jeremiah} and \ref{kristin},  
 
\begin{equation*}
\chi_{K_0}(G)=\frac{ |\ker (\rho(\Q))| |\pi_0(S_{K^{\der}})|  |C(\Q)^\dagger \bks C(\Aff_f) / \nu(K_0)|}{ [G(\R): G(\R)_+] |\nu(K_0) \cap C(\Q)^\dagger|[ K_0^{\der}: K^{\der}\rho(K_0^{\simp})]}          \chi_{\alg}(\Gamma^{\simp}_{K_0}).
\end{equation*}

The theorem then follows from Lemma \ref{schloendorn}.

\end{proof}

\subsection{Examples}

We now use Theorem \ref{chi-theorem} and Proposition \ref{H2} to explicitly compute some cases of $\chi_{K_0}(G)$.   Recall that we write $\OO_f$ for the integer points of $\Aff_f$.
 
\begin{cor} If $T$ is a torus and $K_0 \subset T(\Aff_f)$ is a compact open subgroup, then
\begin{equation*}
\chi_{K_0}(T)=|T(\Q) \bks T(\Aff_f) / K_0| \cdot |K_0 \cap T(\Q)|^{-1}.
\end{equation*}
\end{cor}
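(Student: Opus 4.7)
The plan is to evaluate the definition $\chi_{K_0}(T)=\ol{v}(T;dt_\infty)^{-1}\vol_{dt_f}(K_0)^{-1}\tau(T)d(T)$ term by term for a torus. Since the absolute and real Weyl groups of $T$ are trivial, $d(T)=1$. Since $T$ is its own inner form and is quasi-split, $e(\ol{T})=1$ and $\ol{v}(T;dt_\infty)=\vol_{dt_\infty}(T(\R)/A_T(\R)^+)$, which is finite precisely when $T$ is cuspidal, i.e.\ $T/A_T$ is $\R$-anisotropic. I will restrict to that case, since otherwise $\chi_{K_0}(T)=0$ by convention and the statement is vacuous. With these reductions, the task becomes expressing the Tamagawa number $\tau(T)$ in terms of the class number $h=|T(\Q)\bks T(\Aff_f)/K_0|$ and $|T(\Q)\cap K_0|$.

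The main computation is to decompose $\tau(T)=\vol_{dt}(T(\Q)A_T(\R)^+\bks T(\Aff))$ using the (finite) double-coset decomposition $T(\Aff_f)=\bigsqcup_{i=1}^{h}T(\Q)g_iK_0$. Pulling back to $T(\Aff)=T(\Aff_f)\times T(\R)$ yields a disjoint $T(\Q)$-invariant decomposition
\begin{equation*}
T(\Aff)=\bigsqcup_{i=1}^{h} T(\Q)\cdot\bigl(g_iK_0\times T(\R)\bigr).
\end{equation*}
Because $T$ is commutative, the $T(\Q)$-stabilizer of the set $g_iK_0\times T(\R)$ is simply $T(\Q)\cap g_iK_0g_i^{-1}=T(\Q)\cap K_0$, independent of $i$. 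This stabilizer is finite: it is a discrete subgroup of the compact set $K_0$, as one sees by embedding $T$ into some $\GL_n$. Thus the $i$-th summand contributes Tamagawa volume $\vol_{dt_f}(K_0)\cdot\vol_{dt_\infty}(T(\R)/A_T(\R)^+)/|T(\Q)\cap K_0|$, and summing over $i$ gives
\begin{equation*}
\tau(T)=\frac{h\cdot\vol_{dt_f}(K_0)\cdot\ol{v}(T;dt_\infty)}{|T(\Q)\cap K_0|}.
\end{equation*}

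Substituting this expression into the definition of $\chi_{K_0}(T)$ immediately yields the corollary. There is no real obstacle here; the argument is routine bookkeeping that exploits the abelian-ness of $T$ and the triviality of its Weyl and inner-form data. The only points that must be checked carefully are the $i$-independence and finiteness of the stabilizer $T(\Q)\cap g_iK_0g_i^{-1}$, both of which are immediate from commutativity.
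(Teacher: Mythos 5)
Your argument is correct in substance, but it takes a genuinely different route from the paper. In the paper this corollary is simply Theorem \ref{chi-theorem} specialized to $G=T$: for a torus all the semisimple data trivializes ($G_{\der}=G_{\simp}=1$, so $|\ker\rho(\Q)|$ and the derived-group indices are $1$, $C=T$ with $\nu=\mathrm{id}$ and $C(\Q)^\dagger=T(\Q)$, $[G(\R):G(\R)_+]=1$, and $\chi_{\alg}$ of the trivial group is $1$), leaving exactly $[T(\Aff_f):T(\Q)K_0]\cdot|K_0\cap T(\Q)|^{-1}$. You instead work directly from the definition of $\chi_{K_0}(T)$, unwinding $\tau(T)$ as $\vol_{dt}(T(\Q)A_T(\R)^+\bks T(\Aff))$ and decomposing along the finitely many double cosets $T(\Q)g_iK_0$; this is more elementary, since it bypasses the Shimura-variety input (Harder's Gauss--Bonnet theorem and the identification of $\chi_K(G)$ with $\chi_{\Top}(\ol S_K)$) that underlies Theorem \ref{chi-theorem}, and it is the natural ``class number times regulator-free volume'' computation for tori. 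Two points in your write-up deserve to be made explicit. First, the finiteness of $T(\Q)\cap g_iK_0g_i^{-1}=T(\Q)\cap K_0$ does not follow merely from discreteness of $T(\Q)$ in $T(\Aff)$ via an embedding $T\hookrightarrow\GL_n$, because the archimedean component is unconstrained there; what is needed is discreteness of $T(\Q)$ in $T(\Aff_f)$, which holds precisely under the standing hypothesis of the section that $T$ is $\Q\R$-equitropic (the unlabeled Proposition in Section \ref{Euler Characteristics}, quoting Milne) --- the same hypothesis that makes your restriction to the cuspidal case the right one, since otherwise both sides degenerate. Second, the identity $\tau(T)=\vol_{dt}(T(\Q)A_T(\R)^+\bks T(\Aff))$ tacitly invokes the paper's normalization of the measure on $A_T(\R)^+$ (Lebesgue measure in logarithmic coordinates), which is what matches Ono's definition of $\tau(T)$ via $T(\Aff)^1$. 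With those two points supplied, your bookkeeping $\tau(T)=h\cdot\vol_{dt_f}(K_0)\cdot\ol{v}(T;dt_\infty)\cdot|T(\Q)\cap K_0|^{-1}$, together with $d(T)=1$ and $e(\ol T)=1$, gives the corollary.
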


$\bullet$ Let $T=\Gm$, and $K_0=T(\OO_f)$.  Then $\chi_{K_0}(T)=\half$.

$\bullet$ Let $T$ be the norm-one subgroup of an imaginary quadratic extension $E$ of $\Q$.  Let $K_0=T(\OO_f)$.  Write $\OO(E)$ for the integer points of the adeles $\Aff_E$ over $E$.  Then $T(\Q) \bks T(\Aff_f) / K_0$ injects into $E^\times \bks \Aff_{E,f}^\times/ \OO(E)^{\times}$, which is in bijection with the class group.  If the class number of $E$ is trivial, it follows that $\chi_{K_0}(T)=|T(\Z)|^{-1}$.

\begin{cor} If $G$ is semisimple and simply connected, then
\begin{equation*}
\chi_{K_0}(G)= [G(\R): G(\R)_+]^{-1} \chi_{\alg}(\Gamma_{K_0}).
\end{equation*}
\end{cor}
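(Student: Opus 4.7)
The plan is to obtain the corollary as an immediate specialization of Theorem \ref{chi-theorem}, checking that every auxiliary factor in the general formula collapses to $1$ under the simply connected, semisimple hypothesis (together with the implicit assumption, inherited from the theorem, that $G$ has no compact factors so that strong approximation applies). First I would verify the theorem's hypotheses: the connected center $Z^\circ$ is trivial (hence trivially $\Q\R$-equitropic), and by assumption $G_{\simp}=G$ has no compact factors.

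Next I would walk down the factors appearing in Theorem \ref{chi-theorem}. Because $\rho\colon G_{\simp}\to G_{\der}$ is the identity, we have $|\ker(\rho(\Q))|=1$, $\rho(K_0^{\simp})=K_0$, $K_0^{\der}=K_0$, and $\Gamma^{\simp}_{K_0}=\Gamma^{\der}_{K_0}=\Gamma_{K_0}$. Because $G$ is semisimple, $C=G/G_{\der}$ is trivial, so $\nu$ is trivial and the factors $[C(\Aff_f):C(\Q)^\dagger\nu(K_0)]$ and $|\nu(K_0)\cap C(\Q)^\dagger|$ are each $1$. The factor $[\Gamma^{\der}_{K_0}:G_{\der}(\Q)_+\cap\rho(K_0^{\simp})]=[\Gamma_{K_0}:G(\Q)_+\cap K_0]=1$ by the very definition of $\Gamma_{K_0}$.

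The only non-formal step is the vanishing of the remaining index $[G_{\der}(\Aff_f):G_{\der}(\Q)_+K_0^{\der}]=[G(\Aff_f):G(\Q)_+K_0]$. For this I would invoke strong approximation: since $G$ is simply connected semisimple with no compact factors, $G(\Q)$ is dense in $G(\Aff_f)$, hence $G(\Q)K_0=G(\Aff_f)$. Combined with the classical fact that $G(\R)$ is connected for a simply connected semisimple real group — so that $G(\Q)_+=G(\Q)$ — this gives $G(\Q)_+K_0=G(\Aff_f)$, and the index is $1$. (This same strong-approximation input was already used in the proof of Lemma \ref{schloendorn}, so nothing new is required.)

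Plugging these collapses into the formula of Theorem \ref{chi-theorem} leaves exactly
\[
\chi_{K_0}(G)=[G(\R):G(\R)_+]^{-1}\chi_{\alg}(\Gamma_{K_0}),
\]
which is the claim. The main (mild) obstacle is simply taking care that the strong approximation and connectedness of $G(\R)$ really do apply in the stated generality; no genuinely new calculation is needed beyond the book-keeping above.
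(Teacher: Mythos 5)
Your proposal is correct and follows the same route the paper intends: the corollary is the direct specialization of Theorem \ref{chi-theorem}, with every factor involving $\rho$, $C$, and $\nu$ trivially equal to $1$ and the index $[G_{\der}(\Aff_f):G_{\der}(\Q)_+K_0^{\der}]$ collapsing by strong approximation (the no-compact-factors hypothesis being inherited from the theorem, as you note). Your observation that $G(\R)$ is connected for simply connected semisimple $G$ is the right justification for $G(\Q)_+=G(\Q)$; it also shows the remaining factor $[G(\R):G(\R)_+]$ is itself $1$, so keeping it as in the statement is harmless.
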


$\bullet$ Let $G=\SL_2$ and $K_0=G(\OO_f)$. Then 
\begin{equation*}
\begin{split}
\chi_{K_0}(G) &=\chi_{\alg}(\SL_2(\Z)) \\
&= - \half B_2 \\
&=-2^{-2}3^{-1}.
\end{split}
\end{equation*}

$\bullet$ Let $G=\Sp_4$ and $K_0=G(\OO_f)$.  Then 
\begin{equation*}
\begin{split}
\chi_{K_0}(G) &=\chi_{\alg}(\Sp_4(\Z))\\
&= -\frac{1}{8}B_2 B_4\\
&=-2^{-5}3^{-2}5^{-1}. 
\end{split}
\end{equation*}

\bigskip
When the derived group is simply connected the calculation is not much harder.

\begin{cor} If $G_{\der}$ is simply connected, then
\begin{equation*}
\chi_{K_0}(G)=\frac{  |C(\Q)^\dagger \bks C(\Aff_f) / \nu(K_0)|}{ [G(\R): G(\R)_+] |\nu(K_0) \cap C(\Q)^\dagger|}          \chi_{\alg}(\Gamma^{\der}_{K_0}).
\end{equation*}
\end{cor}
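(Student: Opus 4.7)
The plan is to specialize Theorem \ref{chi-theorem} to the hypothesis that $G_{\der}$ is simply connected, and watch all but two of the factors in the numerator collapse to $1$.

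First I would observe that under this hypothesis $G_{\simp}$ and $G_{\der}$ coincide, so the covering map $\rho$ is the identity. This immediately gives $|\ker(\rho(\Q))|=1$, $\rho(K_0^{\simp})=K_0^{\der}$, and $\Gamma^{\simp}_{K_0}=\Gamma^{\der}_{K_0}$. Moreover, from the definition $\Gamma^{\der}_K = G_{\der}(\Q)_+\cap K$ and the evident equality $G_{\der}(\Q)_+\cap K_0 = G_{\der}(\Q)_+\cap K_0^{\der}$, the factor $[\Gamma^{\der}_{K_0}:G_{\der}(\Q)_+\cap\rho(K_0^{\simp})]$ is just $1$.

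Next I would kill the factor $[G_{\der}(\Aff_f):G_{\der}(\Q)_+ K^{\der}_0]$ by strong approximation. Since $G_{\simp}=G_{\der}$ is semisimple simply connected and has no compact factors (a standing hypothesis of Theorem \ref{chi-theorem}), strong approximation gives density of $G_{\der}(\Q)$ in $G_{\der}(\Aff_f)$, and connectedness of the real points of a simply connected semisimple real group forces $G_{\der}(\Q)_+=G_{\der}(\Q)$. Since $K_0^{\der}$ is open, the density then yields $G_{\der}(\Q)_+ K_0^{\der}=G_{\der}(\Aff_f)$, so this index is $1$.

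After these collapses, the expression in Theorem \ref{chi-theorem} reduces precisely to the claimed formula, once $[C(\Aff_f):C(\Q)^\dagger\nu(K_0)]$ is rewritten in the equivalent double-coset form $|C(\Q)^\dagger\bks C(\Aff_f)/\nu(K_0)|$. The substance of the argument lies entirely in Theorem \ref{chi-theorem}; at this stage there is no real obstacle beyond citing strong approximation and the connectedness of $G_{\der}(\R)$.
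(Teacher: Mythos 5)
Your proposal is correct and matches the paper's (implicit) derivation: the corollary is stated as a direct specialization of Theorem \ref{chi-theorem}, with $\rho=\mathrm{id}$ killing $|\ker(\rho(\Q))|$ and the index $[\Gamma^{\der}_{K_0}:G_{\der}(\Q)_+\cap\rho(K_0^{\simp})]$, strong approximation (valid since $G_{\simp}=G_{\der}$ has no compact factors) together with connectedness of $G_{\der}(\R)$ killing $[G_{\der}(\Aff_f):G_{\der}(\Q)_+K_0^{\der}]$, and $\Gamma^{\simp}_{K_0}=\Gamma^{\der}_{K_0}$, while abelianness of $C$ identifies $[C(\Aff_f):C(\Q)^\dagger\nu(K_0)]$ with the double-coset count. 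No gaps.
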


$\bullet$ Let $G=\GL_2$ and $K_0=G(\OO_f)$.  Then $\chi_{K_0}(G)=\half \chi_{\alg}(\SL_2(\Z))=-2^{-3}3^{-1}$.

$\bullet$ Let $G=\GSp_4$ and $K_0=G(\OO_f)$.  Then $\chi_{K_0}(G)=\half \chi_{\alg}(\Sp_4(\Z))=-2^{-6}3^{-2}5^{-1}$

\begin{lemma} If all the points of $\ker \rho$ are $\Q$-rational, then $\left[\Gamma^{\der}_{K_0}:G_{\der}(\Q)_+ \cap \rho(K_0^{\simp}) \right]=1$.
\end{lemma}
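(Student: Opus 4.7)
The inclusion $G_{\der}(\Q)_+ \cap \rho(K_0^{\simp}) \subseteq \Gamma^{\der}_{K_0}$ is immediate, since $\rho(K_0^{\simp}) \subseteq K_0$ by the definition $K_0^{\simp} = \rho^{-1}(K_0)$.  The plan is to establish the reverse: for $\gamma \in \Gamma^{\der}_{K_0} = G_{\der}(\Q)_+ \cap K_0$, exhibit an adelic lift $k \in G_{\simp}(\Aff_f)$ with $\rho(k) = \gamma$.  Such a $k$ automatically lies in $K_0^{\simp}$, since $\rho(k) = \gamma \in K_0$.

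The first step is to note that the hypothesis forces $\ker\rho$ to be $2$-torsion.  Since $\ker\rho$ is a finite diagonalizable central $\Q$-subgroup of $G_{\simp}$, it is Cartier dual to a finite abelian group $\chi$, with geometric points $\Hom(\chi, \ol\Q^\times)$.  Demanding $\Hom(\chi, \ol\Q^\times) = \Hom(\chi, \Q^\times)$ forces $\chi$ to be $2$-torsion, so $\ker\rho \cong \mu_2^r$ for some $r \geq 0$, with trivial Galois action.  Pick any geometric lift $\tilde\gamma \in G_{\simp}(\ol\Q)$ of $\gamma$.  The cocycle $c_\sigma = \sigma(\tilde\gamma)\tilde\gamma^{-1}$ represents a class $[c] \in H^1(\Q, \mu_2^r) \cong (\Q^\times/(\Q^\times)^2)^r$, the obstruction to lifting $\gamma$ to $G_{\simp}(\Q)$.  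Using the identity $\rho(G_{\simp}(\R)) = G_{\der}(\R)_+$ together with $\gamma \in G_{\der}(\R)_+$, a lift of $\gamma$ over $\R$ exists, so the restriction of $[c]$ to the archimedean place is trivial.

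Combining the archimedean triviality with the integrality $\gamma \in K_0$, I would establish triviality of $[c]$ at each finite place, and then invoke the Hasse principle for squares (with no Grunwald--Wang exception at $n = 2$) to conclude that $[c] = 0$.  Once $\gamma$ lifts rationally, it lifts adelically, and the reverse inclusion follows.  The main obstacle is the finite-place step:  using $\gamma \in K_{0,p}$ to produce a local lift $k_p \in G_{\simp}(\Q_p)$ of $\gamma_p$ at each finite prime $p$.  At primes where $K_{0,p}$ is hyperspecial and $\ker\rho$ extends smoothly, this should follow from Hensel's lemma applied to the smooth surjection $G_{\simp}(\Z_p) \to G_{\der}(\Z_p)$; the finitely many remaining primes require a direct analysis of $K_{0,p}$ and the embedding $\mu_2^r \hookrightarrow G_{\simp}$.
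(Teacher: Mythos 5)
Your bookkeeping of the two inclusions is fine, but the central step of your plan does not go through, and the mechanism you propose for it is false. Producing a local lift of $\gamma_p$ to $G_{\simp}(\Q_p)$ from $\gamma\in K_{0,p}$ alone is not possible: $G_{\simp}(\Z_p)\to G_{\der}(\Z_p)$ is \emph{not} surjective, because the kernel of the isogeny is a finite group scheme, so the cokernel is measured by $H^1(\Z_p,\ker\rho)\neq 0$ and Hensel's lemma (or Lang's theorem on the special fibre) does not apply. Concretely, for $\rho\colon\SL_2\to\PGL_2$ and $p$ odd, the cokernel of $\SL_2(\Z_p)\to\PGL_2(\Z_p)$ is $\Z_p^{\times}/(\Z_p^{\times})^{2}$, and the class $[\diag(u,1)]\in\PGL_2(\Z_p)$ with $u$ a nonsquare unit does not lie in $\rho(\SL_2(\Q_p))$ at all. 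Integrality at $p$ only makes the local obstruction class unramified (and only at hyperspecial primes --- the lemma allows an arbitrary compact open $K_0$), never trivial. So the scheme ``prove $[c]_v=0$ place by place, then apply Hasse'' cannot be executed; indeed in the paper's own test case $G=\PGL_2$, $K_0=\PGL_2(\OO_f)$, the reason an element $\gamma\in\PGL_2(\Q)_+\cap K_0$ lifts is irreducibly global: its determinant has even valuation at every finite prime and is positive at $\R$, hence is a square in $\Q^{\times}$; the local classes are trivial only as a \emph{consequence} of this global triviality, which is essentially what you set out to prove, so your order of deduction is backwards.

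Two further points. First, ``all points of $\ker\rho$ are $\Q$-rational'' does not force $\ker\rho\cong\mu_2^{r}$; it forces $\ker\rho$ to be a \emph{constant} group scheme, and constant kernels of odd order occur (the centre of the quasi-split unitary group in three variables attached to $\Q(\zeta_3)$ is the constant group $\Z/3\Z$); your Cartier-duality argument tacitly assumes the Galois action on the character group is trivial. (For constant coefficients a Chebotarev argument would replace your ``Hasse principle for squares'', so this error alone is reparable.) Second, your archimedean step uses $\rho(G_{\simp}(\R))=G_{\der}(\R)_+$, but the image is only the topological identity component $G_{\der}(\R)^{+}$, which can be strictly smaller than $G_{\der}(\R)_+$, so even that vanishing needs an argument. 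The paper's proof avoids all of this local analysis: it injects the quotient $\Gamma^{\der}_{K_0}/(G_{\der}(\Q)_+\cap\rho(K_0^{\simp}))$ into $G_{\der}(\Q)/\rho(G_{\simp}(\Q))$ and then invokes Deligne's injection of the latter into a cohomology group of the image of $\Gal(\ol{\Q}/\Q)$ acting on $(\ker\rho)(\ol{\Q})$, which is trivial under the rationality hypothesis.
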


\begin{proof}
By Section 2.0.3 of \cite{Deligne}, we have an injection
\begin{equation*}
G_{\der}(\Q) / \rho(G_{\simp}(\Q)) \hookrightarrow H^1(\im( \Gal(\ol{\Q}/\Q)), (\ker \rho)(\ol{\Q})),
\end{equation*}
using the cohomology group defined in that paper.  We also have an injection
\begin{equation*}
\Gamma^{\der}_{K_0} / (G_{\der}(\Q)_+ \cap \rho(K_0^{\simp})) \hookrightarrow G_{\der}(\Q) / \rho(G_{\simp}(\Q)).
\end{equation*}

Since all the points of $\ker \rho$ are $\Q$-rational, all these groups are trivial.
\end{proof}

$\bullet$ Let $G=\PGL_2$ and $K_0=G(\OO_f)$.  The only nontrivial factors in the formula are $[G(\R): G(\R)_+]=2$, $|\ker \rho(\Q)|=2$, and
$\chi_{\alg}(\SL_2(\Z))=-2^{-2}3^{-1}$.  Thus $\chi_{K_0}(G)= -2^{-2}3^{-1}$.

\section{The Case of $\SL_2$} \label{Special Linear}
 In this section we work out the example of $\SL_2$.
Let $G=\SL_2$, defined over $\Q$.  Let $A$ be the subgroup of diagonal matrices in $G$, and let $T$ be the maximal elliptic torus of $G$ given by matrices
\begin{equation} \label{rotation}
\gm_{a,b}=\left(\begin{array}{cc}
a   & -b \\
b  & a  \\
  \end{array} \right),
\end{equation}
with $a^2+b^2=1$.

The characters and cocharacters of $T$ are both isomorphic to $\Z$.  We identify $\Z \isom X^*(T)$ via $n \mapsto \chi_n$, where $\chi_n(\gm_{a,b})=(a+bi)^n$.  We specify  $\Z \isom X_*(T)$ by identifying $n$ with the cocharacter
taking $\alpha$ to $\left(\begin{array}{cc}
\alpha   & 0 \\
0  & \alpha^{-1}  \\
  \end{array} \right)$.
The roots of $T$ in $G$ are then $\{ \pm 2 \}$, and
the coroots of $T$ in $G$ are $\{ \pm 1 \}$.  The Weyl group $\Omega$ of these systems has order $2$ and the compact Weyl group $\Omega_\R$ is trivial.  Thus each $L$-packet of discrete series has order $2$.
The dual group to $G$ is $\hat{G}=\PGL_2(\C)$ in the usual way.

Pick an element $\xi \in G(\C)$ so that

\begin{equation*}
\Ad(\xi) \left( \begin{array}{cc}
a &-b \\
  b&a  \\  \end{array} \right)=\left( \begin{array}{cccc}
a+ib & \\
& a-ib \\  \end{array} \right),
\end{equation*}
 and put $B_T=\Ad(\xi^{-1})B_A$.  Then $B_T$ is a Borel subgroup of $G(\C)$ containing $T$.

Consider the Langlands parameter $\varphi_G: W_\R \to \hat{G}$ given by $\varphi_G(\tau)=\left(\begin{array}{cc}
0  & 1 \\
1  & 0  \\
  \end{array} \right) \times 1$, and
\begin{equation*}
\varphi_G(z)=\left(\begin{array}{cc}
z^n   & 0 \\
0  & \ol{z}^n  \\
  \end{array} \right) \times z =z^\mu \ol{z}^\nu \times z,
\end{equation*}
where $\mu$ corresponds to $n \in X_*(\hat{T}) \isom X^*(T)$ and $\nu$ corresponds to $-n$.  The corresponding representation $E$ of $G(\C)$ has highest weight $\lambda_B=n-1 \in X^*(T)$.  It is the $(n-1)$-th symmetric power representation.  Its central character is $\lambda_0(z)=z^{n-1}$, where $z=\pm 1$.

We put $\pi_G=\pi(\varphi_G,B_T)$, in the notation from Section \ref{Langlands packets}.  Write $\pi_G'$ for the other discrete series representation in $\Pi_E$.  Thus the $L$-packet determined by $\varphi_G$ is
\begin{equation*}
\Pi_E= \{ \pi_G, \pi_G' \}.
\end{equation*}

We will put $f_{\infty}dg_{\infty}=e_{\pi_G}$ as in Section \ref{PC}.

\subsection{Main Term}

First we consider the terms $ST_g(fdg,\pm 1)$.

We have $S \Phi_G(1,e_{\pi_G})=-n\ol{v}(G; dg_{\infty})^{-1}$, and so
\begin{equation*}
ST_g(fdg,\pm 1, G)=( \pm 1)^n n \ol{v}(G; dg_{\infty})^{-1} f^{\infty}(\pm 1).
\end{equation*}

We have $S \Phi_A(1,e_{\pi_G})=-\ol{v}(G; dg_{\infty})^{-1}$, and so
\begin{equation*}
ST_g(fdg,\pm 1 , A)=  (\pm 1)^n  \half \ol{v}(G; dg_{\infty})^{-1} f_A^\infty( \pm 1).
\end{equation*}

If $\gm$ is a regular semisimple element of $G(\C)$ with eigenvalues $\alpha, \alpha^{-1}$, then according to the Weyl character formula,
\begin{equation*}
\tr(\gm; E)= \frac{\alpha^n-\alpha^{-n}}{\alpha-\alpha^{-1}}.
\end{equation*}

Define
\begin{equation*}
t_4(n)=\tr \left( \left(\begin{array}{cc}
i   & 0 \\
0  & -i  \\
  \end{array} \right); E \right),
\end{equation*}
where $i$ is a fourth root of unity.
Then $t_4(n)=0$ if $n$ is even, and $t_4(n)=(-1)^{\frac{n-1}{2}}$ if $n$ is odd.

Similarly, define
\begin{equation*}
t_3(n)=\tr \left( \left(\begin{array}{cc}
\zeta   & 0 \\
0  & \zeta^2  \\
  \end{array} \right); E \right),
\end{equation*}
where $\zeta$ is a third root of unity.
Then $t_3(n)=[0,1,-1;3]_n$, meaning that  
\begin{equation*}
t_3(n) = 
\begin{cases} & 0 \text{ if } n \equiv 0, \\
                          & 1 \text{ if } n \equiv 1, \\
                          & -1 \text{ if } n \equiv 2. \end{cases}
                          \end{equation*}
Here the congruence is modulo $3$.

There are three stable conjugacy classes of elliptic $\gm \in G(\Q)$, which we represent by
\begin{equation*}
\gm_3=\left(\begin{array}{cc}
-1   & -1 \\
1  & 0  \\
  \end{array} \right),
\gm_4=\left(\begin{array}{cc}
0   & -1 \\
1  & 0  \\
  \end{array} \right),
 \text{ and }
\gm_6=\left(\begin{array}{cc}
0   & -1 \\
1  & 1  \\
  \end{array} \right).
\end{equation*}

Note that $-\gm_4 \sim \gm_4$, $\gm_6^2=\gm_3$, and $-\gm_3 \sim \gm_6$.

Write $T_3$ for the elliptic torus consisting of elements
\begin{equation*}
\left(\begin{array}{cc}
a   & a-b \\
b-a  & b  \\
  \end{array} \right),
\end{equation*}
with $a^2-ab+b^2=1$.

We have $S \Phi_G(\gm_3,e_{\pi_G})=-\ol{v}(T_3)^{-1} t_3(n)$, and so
\begin{equation*}
ST_g(fdg,\gm_3 , G)=-\ol{v}(T_3)^{-1} SO_{\gm_3}(f^\infty dg_f) t_3(n).
\end{equation*}

We have $S \Phi_G(\gm_4,e_{\pi_G})=-\ol{v}(T)^{-1} t_4(n)$, and so
\begin{equation*}
ST_g(fdg,\gm_4 , G)=-\ol{v}(T)^{-1} SO_{\gm_4}(f^\infty dg_f) t_4(n).
\end{equation*}

Finally $S \Phi_G(\gm_6,e_{\pi_G})=-\ol{v}(T_3) t_3(n)(- 1)^{n-1}$, and so
\begin{equation*}
ST_g(fdg,  \gm_6 , G)=-\ol{v}(T_3)^{-1} SO_{-\gm_3}(f^\infty dg_f) t_3(n)(- 1)^{n-1}.
\end{equation*}

Thus, $ST_g(fdg)$ is equal to the sum of the following terms:
\begin{equation*}
-n \ol{v}(G; dg_{\infty})^{-1} f^\infty(1)    +n \ol{v}(G; dg_{\infty})^{-1} f^\infty(-1) (- 1)^{n}
\end{equation*}
\begin{equation*}
-\half \ol{v}(A; da_{\infty})^{-1}f_A^\infty(1)+\half \ol{v}(A; da_{\infty})^{-1}f_A^\infty(-1)(-1)^{n}
\end{equation*}
\begin{equation*}
-\ol{v}(T_3)^{-1} SO_{\gm_3}(f^\infty dg_f) t_3(n)-\ol{v}(T)^{-1} SO_{\gm_4}(f^\infty dg_f) t_4(n)+\ol{v}(T_3)^{-1} SO_{-\gm_3}(f^\infty dg_f) t_3(n)(- 1)^{n}.
\end{equation*}

\subsection{Endoscopic Terms}

\begin{defn} Let $E$ be an imaginary quadratic extension of $\Q$.  Write $H_E$ for the kernel of the norm map $\Res^E_{\Q} \Gm \to \Gm$. \end{defn}

The $H_E$ comprise the (proper) elliptic endoscopic groups for $G=\SL_2$.  For each $H=H_E$ one finds $\tau(H)=2$ and $| \Out(H,s,\eta)|=1$ (see \cite{K84}, Section 7).    Therefore $\iota(G,H)=\half$.

\begin{equation*}
e_{\pi_G}^H=e_{\chi_n}+ e_{\chi_n^{-1}}.
\end{equation*}

Write $f^H dh=f^{\infty H} dh_f  e_{\pi_G}^H$, where $f^{\infty H}dh_f$ is the transfer of $f^{\infty} dg_f$.  Choose $dh_{\infty}$ so that $dh_f dh_{\infty}$ is the Tamagawa measure
on $H$.  Then we obtain
\begin{equation*}
ST_g(f^H dh)=2  \ol{v}(H; dh_{\infty}) \sum_{\gm_H} f^{\infty,H}(\gm_H)  \Tr_{\Q}^E(\gm_H^n), 
\end{equation*}
the sum being taken over $\gm_H \in H(\Q)$.

{ \bf Remark:}  Consider the local transfer, where $f_p dg_p$ is a spherical (invariant under $G(\Zp)$) measure on $G(\Qp)$.  Then if $H$ ramifies over $p$, a representation $\pi_p$ in one of the $L$-packets transferring from $H$ will also be ramified.  This means that $\tr \pi_p(f_pdg_p)=0$.  So we take $f_p^H=0$ in this case.
 Thus $\Kot(f dg)=ST_g(fdg)$; there is no (proper) endoscopic contribution.  This is compatible with the fact that $m_{\disc}$ is constant on $L$-packets in this case.

\subsection{Case of $\Gamma=\SL_2(\Z)$}

We take $K_f=K_0$ to be the integral points of $G(\Aff_f)$.  Also let $K_A=K_0 \cap A(\Aff_f)$ and $K_T=K_0 \cap T(\Aff_f)$.  Each of these breaks into a product of local groups $K_{0,p}$, etc.

We put $f^\infty dg_f=e_{K_0}$.  Note that $f^\infty(g)=f^\infty(-g)$ for all $g \in G(\Aff_f)$ and $f^\infty_A(a)=f^\infty_A(-a)$ for all $a \in A(\Aff_f)$.  Therefore, if $n$ is even, then $ST_g(fdg)=0$.  So assume henceforth that $n$ is odd.  Then our expression is equal to:
\begin{equation*}
-2n \ol{v}(G; dg_{\infty})^{-1} f^\infty(1)- \ol{v}(A; da_{\infty})^{-1}f_A^\infty(1) -2\ol{v}(T_3)^{-1} SO_{\gm_3}(f^\infty dg_f) t_3(n)+\ol{v}(T)^{-1} SO_{\gm_4}(f^\infty dg_f) (-1)^{\frac{n+1}{2}}.
\end{equation*}

We have
\begin{equation*}
\begin{split}
-2n \ol{v}(G; dg_{\infty})^{-1} f^\infty(1)  &= -2n \ol{v}(G; dg_{\infty})^{-1} \vol_{dg_f}(K_0)^{-1} \\
&= -2n \tau(G)^{-1} d(G)^{-1} \chi_{K_0}(G) \\
&=\dfrac{n}{12},
\end{split}
\end{equation*}
\begin{equation*}
\begin{split}
- \ol{v}(A; da_{\infty})^{-1} f_A^\infty(1) &= - \ol{v}(A; da_{\infty})^{-1} \vol_{da_f}(K_A)^{-1} \\
 &= -\tau(A)^{-1} d(A)^{-1} \chi_{K_A}(A) \\
&=-\half.
\end{split}
\end{equation*}

Now we consider $SO_{\gm_4}(f^\infty dg_f; dt_f)$.
We have $1-\alpha(\gm_4)=2$ for the positive root $\alpha$ of $G$.  Therefore by Proposition \ref{orbitalintegrals}, the local orbital integrals are equal to $\vol_{dt_p}(K_{T,2})^{-1}$ for $p \neq 2$.
At $p=2$, one has two stable conjugacy classes $\gm_4$ and $\gm_4'$ in the conjugacy class of $\gm_4$, where $\gm_4'=\left(\begin{array}{cc}
0   & 1 \\
-1  & 0  \\
  \end{array} \right)$.

It follows that

\begin{equation*}
SO_{\gm_4}(f^\infty dg_f; dt_f)= \left( O_{\gm_4}(e_{K_2}; dt_2)+ O_{\gm_4'}(e_{K_2}; dt_2) \right) \prod_{p \neq 2} \vol_{dt_p}(T(\Qp) \cap K_p)^{-1} .
\end{equation*}

To compute the local integral at $p=2$, we reduce to a $\GL_2$-computation by the following lemma.  Its proof is straightforward.

\begin{lemma}
Let $F$ be a $p$-adic local field with ring of integers $\OO$.  Put $G=\SL_2$, $\tilde{G}=\GL_2$,  and $Z$ for the center of $\tilde{G}$.  Pick Haar measures $dg$ on $G(F)$, $d \tilde g$ on $\tilde G(F)$, and $dz$ on $Z(F)$.
Let
$f \in C_c(Z(F) \bks \tilde G(F))$.  Then
\begin{equation*}
\frac{\vol_{dz}(Z(\OO))}{\vol_{d \tilde g}(\tilde G(\OO))} \int_{Z(F) \bks \tilde G(F)} f(g) \frac{d \tilde g}{dz}= \vol_{dg}(G(\OO))^{-1} |\OO^{\times}/\OO^{\times 2}|^{-1} \sum_{\alpha} \int_{G(F)} f(t_\alpha g) dg.
\end{equation*}
Here $\alpha$ runs over the square classes in $F^\times$, and $t_\alpha=\left(\begin{array}{cc}
\alpha   & 0 \\
0  & 1  \\
  \end{array} \right)$.
\end{lemma}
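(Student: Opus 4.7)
The plan is to decompose $\tilde G(F)$ into cosets of $H := Z(F) G(F)$ and then reduce the integral to a calculation on a single coset. Since $\det(Z(F)) = F^{\times 2}$ and $\det(G(F)) = 1$, the determinant map identifies $\tilde G(F)/H$ with $F^\times/F^{\times 2}$, so coset representatives are precisely the $t_\alpha$ as $\alpha$ runs over square classes, giving $\tilde G(F) = \bigsqcup_\alpha Z(F) t_\alpha G(F)$. Because left translation by $t_\alpha$ preserves $d\tilde g$ and commutes with the central subgroup $Z(F)$, each integral over $Z(F) \bks Z(F) t_\alpha G(F)$ against $d\tilde g/dz$ equals the integral over $Z(F) \bks H$ of the translate $g \mapsto f(t_\alpha g)$.

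Next I would relate $d\tilde g/dz$ on $Z(F) \bks H$ to $dg$ on $G(F)$. The multiplication map $\mu : Z(F) \times G(F) \to H$ is a surjection with kernel $\{(I,I),(-I,-I)\}$, since $Z(F) \cap G(F) = \{\pm I\}$; let $d\nu$ be the Haar measure on $H$ whose pullback under $\mu$ is $dz \times dg$, so that $\mu$ is $2$-to-$1$ measure-preserving onto $(H, 2\,d\nu)$. Writing $d\tilde g|_H = c_0 \cdot d\nu$, a standard unfolding yields
\begin{equation*}
\int_{Z(F) \bks H} \phi \cdot \frac{d\tilde g}{dz} = \frac{c_0}{2} \int_{G(F)} \phi(g)\, dg
\end{equation*}
for $\phi$ pulled back from $Z(F) \bks H$, with the $\tfrac12$ coming from the fact that $G(F) \to Z(F) \bks H$ is itself $2$-to-$1$ with kernel $Z(F) \cap G(F) = \{\pm I\}$.

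To determine $c_0$ I would test on $A := Z(\OO) G(\OO)$. Using $\det(zg) = z^2$ one checks $\mu^{-1}(A) = Z(\OO) \times G(\OO)$, giving $\vol_{d\nu}(A) = \tfrac12 \vol_{dz}(Z(\OO)) \vol_{dg}(G(\OO))$. On the other hand $A$ is the inverse image of $\OO^{\times 2}$ under $\det : \tilde G(\OO) \to \OO^\times$, so $[\tilde G(\OO) : A] = |\OO^\times/\OO^{\times 2}|$ and hence $\vol_{d\tilde g}(A) = \vol_{d\tilde g}(\tilde G(\OO))/|\OO^\times/\OO^{\times 2}|$. Substituting the resulting value of $c_0$ and summing over $\alpha$ yields the desired identity. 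The only real bookkeeping hurdle is to keep the two distinct copies of $\{\pm I\}$ straight — one as $\ker \mu$, the other as $Z(F) \cap G(F)$ — so that the final answer involves only the index $|\OO^\times/\OO^{\times 2}|$ and not a stray factor of $2$.
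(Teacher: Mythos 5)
Your proof is correct. Note that the paper itself offers no argument to compare against --- it simply asserts the lemma's proof is straightforward --- so your write-up fills a gap rather than deviating from anything. Checking the details: the coset decomposition $\tilde G(F)=\bigsqcup_\alpha Z(F)t_\alpha G(F)$ via the determinant is right (and $Z(F)G(F)$ is normal, so left translation by $t_\alpha$ descends to the quotient and preserves $d\tilde g/dz$); the kernel of $\mu$ is indeed $\{(I,I),(-I,-I)\}$, giving $\vol_{d\nu}(A)=\tfrac12\vol_{dz}(Z(\OO))\vol_{dg}(G(\OO))$; the identification $\mu^{-1}(A)=Z(\OO)\times G(\OO)$ and the index $[\tilde G(\OO):A]=|\OO^\times/\OO^{\times2}|$ (using surjectivity of $\det:\tilde G(\OO)\to\OO^\times$ and $\OO^\times\cap F^{\times2}=\OO^{\times2}$) are correct; and the two factors of $2$ --- one from $\ker\mu$, one from $Z(F)\cap G(F)=\{\pm I\}$ in the unfolding $\int_{Z\bks H}\phi\,\frac{d\nu}{dz}=\tfrac12\int_{G(F)}\phi(g)\,dg$ --- cancel exactly as you claim, leaving $\frac{c_0}{2}=\vol_{d\tilde g}(\tilde G(\OO))\bigl(|\OO^\times/\OO^{\times2}|\,\vol_{dz}(Z(\OO))\,\vol_{dg}(G(\OO))\bigr)^{-1}$, which yields the stated identity. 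The only point left tacit is that each $g\mapsto f(t_\alpha g)$ is compactly supported on $G(F)$ (since $f$ is compactly supported modulo the center and $\det$ pins down the central coordinate up to sign), which guarantees convergence; this is a one-line remark worth adding.
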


\begin{prop}
We have
\begin{equation*}
O_{\gm_4}(e_{K_2}; dt_2)+ O_{\gm_4'}(e_{K_2}; dt_2)= 2 \vol_{dt_2}(K_{T,2})^{-1}. 
\end{equation*}
\end{prop}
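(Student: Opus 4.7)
The plan is to apply the preceding lemma with $f(g) = \mathbf{1}_{\tilde K}(g^{-1}\gm_4 g)$, where $\tilde K = \GL_2(\Z_2)$, and read off the identity by equating both sides. Left $Z(\Q_2)$-invariance of $f$ is automatic since $\gm_4$ commutes with $Z$, and compact support modulo $Z$ will follow from the lattice computation below.

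For the LHS of the lemma, $\int_{Z \bks \tilde G} f \, d\bar g$ is, up to volume factors, the number of $Z(\Q_2) = \Q_2^{\times}$-orbits of $\gm_4$-stable $\Z_2$-lattices in $\Q_2^2$. Since $\OO' := \Z_2[\gm_4] \cong \Z_2[i]$ is a DVR (as $\Q_2(i)/\Q_2$ is ramified) with uniformizer $1+i$, the stable lattices are precisely $(1+i)^k \OO'$ for $k \in \Z$, and the relation $2\OO' = (1+i)^2\OO'$ collapses the $\Q_2^{\times}$-orbits to $k \bmod 2$. This yields exactly two orbits, and after the prefactors $\vol_{dz}(Z(\Z_2))/\vol_{d\tilde g}(\tilde K)$ cancel with the per-orbit measure, the LHS equals $2$.

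For the RHS, one checks that $f(t_\alpha g) = \mathbf{1}_{K_2}(g^{-1}\gm_\alpha g)$ with $\gm_\alpha := t_\alpha^{-1}\gm_4 t_\alpha$, since the determinant-one condition forces the containment into $\tilde K \cap \SL_2(\Q_2) = K_2$. A direct calculation shows $\gm_\alpha$ is $\SL_2(\Q_2)$-conjugate to $\gm_4$ iff $\alpha$ is a norm from $\Q_2(i)/\Q_2$, that is, iff $\alpha = a^2 + b^2$ for some $a, b \in \Q_2$. Working modulo $8$, exactly four of the eight square classes of $\Q_2^{\times}$ are norms (represented by $1, 2, 5, 10$), while the other four give $\gm_\alpha \sim \gm_4'$, the only remaining $\SL_2(\Q_2)$-class in the stable class. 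Unfolding each integral using the anisotropy of the centralizer $T_\alpha$ of $\gm_\alpha$ (so $T_\alpha(\Q_2) = K_{T_\alpha,2}$) gives
\begin{equation*}
\int_{\SL_2(\Q_2)} \mathbf{1}_{K_2}(g^{-1}\gm_\alpha g)\, dg = \vol(K_2)\, \vol_{dt_\alpha}(K_{T_\alpha,2})\, O_{\gm_\alpha}(e_{K_2}; dt_\alpha),
\end{equation*}
and with matching Haar measures making $\vol_{dt_\alpha}(K_{T_\alpha,2}) = \vol(K_{T,2})$, plus the overall factor $|\Z_2^{\times}/\Z_2^{\times 2}|^{-1} = 1/4$, the RHS simplifies to $\vol(K_{T,2})(O_{\gm_4} + O_{\gm_4'})$.

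Equating LHS and RHS yields the claimed formula. The main bookkeeping challenge will be getting the Haar measures compatible across the conjugate tori $T_\alpha$ and tracking the volume factors in the lemma; a secondary point is carefully verifying the four-versus-four split of square classes of $\Q_2^{\times}$ under the norm form $x^2 + y^2$.
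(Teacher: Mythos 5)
Your argument is correct and is essentially the paper's proof: both apply the preceding lemma to the characteristic function attached to $\gm_4$, split the eight square classes of $\Q_2^{\times}$ four--four according to whether $\alpha$ is a norm from $\Q_2(i)$, and evaluate the $\GL_2$ side as $2$ --- the paper quotes this value from the elliptic orbital integral computation in \cite{KClay}, while you recover it by the (equivalent) count of $\gm_4$-stable lattices modulo homothety. One small correction: for $\alpha$ of odd valuation one has $T_\alpha(\Q_2)\not\subseteq K_2$, so the unfolding factor is $\vol_{dt_\alpha}(T_\alpha(\Q_2))$ rather than $\vol_{dt_\alpha}(K_{T_\alpha,2})$; since the measures on the conjugate tori are the transported ones, this volume equals $\vol_{dt_2}(T(\Q_2))=\vol_{dt_2}(K_{T,2})$ (because $a^2+b^2=1$ forces $a,b\in\Z_2$, so $T(\Q_2)\subseteq\SL_2(\Z_2)$), and your final identity is unaffected.
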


\begin{proof}
Write  $ \tilde f_2$ for the characteristic function of $\GL_2(\Z_2)Z(\Q_2)$.  By the lemma,
\begin{equation*}
\int_{Z(\Q_2) \bks \GL_2(\Q_2)}\tilde f_2(g^{-1}\gm_4 g) \frac{d \tilde g}{dz}=  \vol_{dt_2}(K_{T,2}) |\Z_2^{\times}/\Z_2^{\times 2}|^{-1} \sum_\alpha O_{\Ad(t_{\alpha})(\gm_4)}(e_{K_0}; dt_2).
\end{equation*}

Here we are normalizing $d \tilde g$ and $dz$ so that $\vol_{dz}(Z(\Z_2))=\vol_{d \tilde g}(\GL_2(\Z_2))=1$. 

In fact, $\Ad(t_{\alpha})(\gm_4)$ is conjugate in $G(\Q_2)$ to $\gm_4$ if and only if $\alpha$ is a norm from $\Q_2(\sqrt{-1})$, and in the contrary case, it is conjugate to $\gm_4'$.  It follows that

\begin{equation*}
\int_{Z(\Q_2) \bks \GL_2(\Q_2)} f_2(g^{-1}\gm_4 g) \frac{d \tilde g}{dz}=   \left(O_{\gm_4}(e_{K_2}; dt_2)+O_{\gm'_4}(e_{K_2}; dt_2) \right) \vol_{dt_2}(K_{T,2}).
\end{equation*}

By an elliptic orbital integral computation in \cite{KClay}, the left hand side is equal to $2$.

\end{proof}

We conclude that
\begin{equation*}
SO_{\gm_4}(f^\infty dg_f; dt_f)=2 \vol_{dt_f}(T(\Aff_f) \cap K_0)^{-1},
\end{equation*}
and so
\begin{equation*}
\begin{split}
-\ol{v}(T)^{-1} SO_{\gm_4}(f^\infty dg_f)t_4(n) &= -2\ol{v}(T)^{-1} \vol_{dt_f}(T(\Aff_f) \cap K_0)^{-1} t_4(n) \\
&= -2\tau(T)^{-1} \chi_{K_T}(T) t_4(n) \\
&=2^{-2} (-1)^{\frac{n+1}{2}}.
\end{split}
\end{equation*}

Similarly, we find that
\begin{equation*}
SO_{\gm_3}(f^\infty dg_f)=2 \vol_{dt_{3,f}}(T_3(\Aff_f) \cap K_0)^{-1},
\end{equation*}
and so
\begin{equation*}
-2\ol{v}(T_3)^{-1} SO_{\gm_3}(f^\infty dg_f) t_3(n)=-3^{-1}t_3(n).
\end{equation*}

We conclude that in this case,
\begin{equation*}
ST_g(fdg)= \frac{n}{12}-\half+ \frac{1}{4} (-1)^{\frac{n+1}{2}}-\frac{1}{3}t_3(n).
\end{equation*}

Note that for $n >1$ this agrees precisely with the discrete series multiplicities.  For $n=1$, this expression is equal to $-1$, but of course in this case $\pi$ is not regular.

\section{Real Tori} \label{Real Tori}

We have finished our discussion of $\SL_2$.  Starting with this section, we begin to work out the example of $\GSp_4$.  Various isomorphisms of tori must be written carefully, so we begin by explicitly working out their parametrizations.

\subsection{The Real Tori $\Gm$, $S$, and $T_1$}

We identify the group of characters of $\Gm$ with $\Z$ in the usual way, via $\left(a \mapsto a^n \right) \leftrightarrow n$.

Let $A_0= \Gm \times \Gm$, viewed as a maximal torus in $\GL_2$ in the usual way.  Via the above identification we obtain
  $X^*(A_0) \cong \Z^2$ and $X_*(A_0) \cong \Z^2$.

Let $S= \Res_\R^\C \Gm$.  Recall that $\Res^\C_\R \Gm$ denotes the algebraic group over $\R$ whose $\mathcal{A}$-points are $(\mathcal{A} \otimes \C)^\times$ for an $\R$-algebra $\mathcal{A}$.  By choosing the basis $\{1,i \}$ of $\C$ over $\R$, we have an injection $(\mathcal{A} \times \C)^\times \to \GL(\mathcal{A} \otimes \C) \cong \GL_2(\mathcal{A})$.  Thus we have an embedding $\iota_S:S \to \GL_2$ as an elliptic maximal torus.

There is a ring isomorphism $\varphi: \C \otimes \C \isom \C \times \C$ so that $\varphi(z_1 \otimes z_2)=(z_1z_2,z_1 \ol{z_2})$, which restricts to an isomorphism $\varphi: S(\C) \isom \Gm(\C) \times \Gm(\C)$.
This isomorphism is also actualized by conjugation within $\GL_2(\C)$.  Fix $x \in \GL_2(\C)$ so that
\begin{equation*}
\Ad(x) \left(\begin{array}{cc}
a   & -b \\
b  & a  \\
  \end{array} \right)=
\left(\begin{array}{cc}
a+ib   &  \\
  & a-ib  \\
  \end{array} \right);
\end{equation*}
then $\Ad(x): S(\C) \isom A_0(\C)$ is identical to $\varphi$, viewing these two tori under the embeddings above.

We fix the isomorphism from $\Z^2$ to $X^*(S)$ which sends $(1,0)$ (resp. $(0,1)$) to the character $\varphi$ composed with projection to the first (resp. second) component of $\Gm \times \Gm$.
Similarly we fix the isomorphism from $\Z^2$ to $X_*(S)$ which sends $(1,0)$ (resp. $(0,1)$) to the cocharacter $a \mapsto \varphi^{-1}(a,1)$ (resp. $a \mapsto \varphi^{-1}(1,a)$).

Write $\hat{S}$ for the Langlands dual torus to $S$.  It is isomorphic to $\C^\times \times \C^\times$ as a group, with $\Gamma_\R$-action defined by $\sigma(\alpha,\beta)=(\beta, \alpha)$.  We fix the isomorphism $X^*(S) \isom X_*(\hat{S})$ given by
\begin{equation*}
(a,b) \mapsto (z \mapsto (z^a,z^b)).
\end{equation*}

We have an inclusion $\iota_S: \Gm \to S$ given on $\mathcal A$-points by $a \mapsto a \otimes 1$.
Write $\sigma_S$ for the automorphism of $S$ given by $1 \otimes \sigma$ on $\mathcal A$-points.  Note that the fixed point set of $\sigma_S$ is precisely the image of $\iota_S$.

Write $\Nm: S \to \Gm$ for the norm map given by $s \mapsto s \cdot \sigma_S(s)$.  Note that the product $s \cdot \sigma_S(s)$ is in $\iota_S(\Gm)$, which we identify here with $\Gm$.  One computes that the norm map induces the map $n \mapsto (n,n)$ from $X^*(\Gm)$ to $X^*(S)$ with the above identifications.

Write $T_1$ for the kernel of this norm map. Its group of characters fits into the exact sequence
\begin{equation*}
0 \to X^*(\Gm) \to X^*(S) \to X^*(T_1) \to 0.
\end{equation*}
We identify $X^*(T_1)$ with $\Z$ in such a way so that the restriction map $X^*(S) \to X^*(T_1)$ is given by $(a,b) \mapsto a-b$. The corresponding map $\hat{S} \to \hat{T}$ is given by $(\alpha,\beta) \mapsto \alpha \beta^{-1}$.

\subsection{The Kernel and Cokernel Tori}
\begin{defn} We define $\Aker$ to be the kernel of the map from $\Gm^4 \to \Gm$ given by $(a,b,c,d) \mapsto \dfrac{ab}{cd}$.  We define $\Acok$ to be the cokernel of the map from $\Gm$ to $\Gm^4$ given by $x \mapsto (x,x,x^{-1}, x^{-1})$.  Write $\Tker$ for the kernel of the map
\[ S \times S \to \Gm \]
given by
\[ (\alpha,\beta) \mapsto \Nm(\alpha/\beta), \]
and $\Tcok$ for the cokernel of the map
\[ \Gm \to S \times S \]
given by
\[ x \mapsto (\iota_S(x),\iota_S(x^{-1})). \]
\end{defn}

Identifying $X_*(\Gm)$ and $X^*(\Gm)$ with $\Z$ as before, we obtain exact sequences
\begin{equation*}
0 \to X_*(\Aker) \to \Z^4 \to \Z \to 0,
\end{equation*}
\begin{equation*}
0 \to \Z \to \Z^4 \to X^*(\Aker) \to 0,
\end{equation*}
\begin{equation*}
0 \to \Z \to \Z^4 \to X_*(\Acok) \to 0,
\end{equation*}
\begin{equation*}
0 \to X^*(\Acok) \to \Z^4 \to \Z \to 0.
\end{equation*}

Here the maps from $\Z \to \Z^4$ are both $n \mapsto (n,n,-n,-n)$, and the maps from $\Z^4 \to \Z$ are both
$(n_1,n_2,n_3,n_4) \mapsto n_1+n_2-n_3-n_4$.

Thus we obtain isomorphisms
\begin{equation*}
g_{\kc}: X^*(A_{\ker}) \isom X_*(A_{\cok})
\end{equation*}
and
\begin{equation*}
g_{\ck}: X^*(A_{\cok}) \isom X_*(A_{\ker}),
\end{equation*}
obtained from the exact sequences defining $\Aker$ and $\Acok$.  In this way we view $\Acok(\C)$ and $\Aker(\C)$ as the dual tori $\hat{A}_{\ker}$ and $\hat{A}_{\cok}$, respectively.

The isomorphism $\varphi \times \varphi: S(\C) \times S(\C) \isom (\C^\times)^4$ gives isomorphisms $\Phi_{\ker}: \Tker(\C) \isom \Aker(\C)$ and $\Phi_{\cok}: \Tcok(\C) \isom \Acok(\C)$.

Consider the map from $S \times S$ to $S \times S$ given by $(a,b) \mapsto (ab,a \sigma_S(b))$.  This fits together with the previous maps to form an exact sequence
\begin{equation*}
1 \to \Gm \to S \times S \to S \times S \to \Gm \to 1,
\end{equation*}
and yields an isomorphism $\Psi_T: \Tcok \isom \Tker$.

Consider the map from $\Gm^4$ to $\Gm^4$ given by $(a,b,c,d) \mapsto (ac,bd,ad,bc)$.
This fit together with the previous maps to form an exact sequence
\begin{equation*}
1 \to \Gm \to \Gm^4 \to \Gm^4 \to \Gm \to 1
\end{equation*}
and yields an isomorphism $\Psi_A: \Acok \isom \Aker$.  On $\C$-points we have
\begin{equation} \label{100students}
\Phi_{\ker} \circ \Psi_T(\C)=\Psi_A(\C) \circ \Phi_{\cok}.
\end{equation}

\section{Structure of $\GSp_4(F)$} \label{Structure of}

\subsection{The General Symplectic Group}
Let $F$ be a field of characteristic $0$.  Put
\[ J=   \left( \begin{array}{cccc}
&&&1 \\
&&-1& \\
&1&& \\
-1 &&& \\  \end{array} \right). \]

Take $G$ to be the algebraic group $\GSp_4= \{ g \in \GL_4 \mid gJg^t= \mu J, \text{ some } \mu=\mu(g) \}$.  It is closely related to the group
$G'=\Sp_4= \{g \in \GSp_4 \mid \mu(g)=1 \}$.  Write $A$ for the subgroup of diagonal matrices in $G$, and $Z$ for the subgroup of scalar matrices in $G$.

We fix the isomorphism $\iota_A: \Aker \isom A$ given by
\begin{equation} \label{abcd}
(a,b,c,d) \mapsto \left( \begin{array}{cccc}
a &&& \\
&c&& \\
&& d& \\
&&& b \\  \end{array} \right).
\end{equation}

Let $B_A$ be the Borel subgroup of upper triangular matrices in $G$.

\subsection{Root Data}

Although $A$ and $\Aker$ are isomorphic tori, we prefer to parametrize their character and cocharacter groups differently, since the isomorphism $\iota_A$ permutes the order of the components.

So we express $X^*(A)=\Hom(A,\Gm)$ as the cokernel of the map
\begin{equation} \label{exactchar}
i: \Z \to \Z^4,
\end{equation}
given by $i(n)=(n,-n,-n,n)$.

Write $e_1, \ldots, e_4$ for the images in $X^*(A)$ of $(1,0,0,0), \ldots, (0,0,0,1)$.  Thus $e_1+e_4=e_2+e_3$.
The basis $\Delta_G$ of simple roots corresponding to $B_A$ is  $ \{ e_1-e_2, e_2-e_3 \}$.  The corresponding positive roots are
$\{ e_1-e_2,e_1-e_4,e_2-e_3,e_1-e_3 \}$.  The half-sum of the positive roots is then $\rho_B=\half(4e_1-e_2-3e_3) \in X^*(A)$.

\begin{defn} Write $\Omega$ for the Weyl group of $A$ in $G$.  Write $w_0,w_1$, and $w_2$ for the elements of $\Omega$ which conjugate  $\diag(a,b,c,d) \in A$ to
\begin{equation*}
\diag(d,c,b,a), \diag(a,c,b,d), \text{ and } \diag(b,a,d,c),
\end{equation*}
respectively.
\end{defn}
$\Omega$ has order $8$ and is generated by $w_0,w_1$, and $w_2$.

Express $X_*(A)$ as the kernel of the map
\begin{equation} \label{exactco}
p: \Z^4 \to \Z,
\end{equation}
given by $p(a,b,c,d)=a-b-c+d$.

Let $\vt_1=(1,0,0,-1)$ and $\vt_2=(0,1,-1,0) \in X_*(A)$.
Then the coroots of $A$ in $G$ are given by $R^\vee=R^\vee(A,G)= \{ \pm \vt_1 \pm \vt_2, \pm \vt_1, \pm \vt_2 \}$. The basis $\Delta^\vee_G$ of simple coroots dual to $\Delta_G$ is $\{  \vt_1-\vt_2, \vt_2  \}$.  Then $(X^*(A),\Delta_G, X_*(A), \Delta^\vee_G)$ is a based root datum for $G$.

\subsection{The Dual Group $\hat{G}$}

We will take $\hat{G}$ to be $\GSp_4(\C)$, with trivial $L$-action, and the same based root data as already discussed for $G$.  The isomorphism
\begin{equation} \label{Kimball}
X^*(A) \stackrel{(\iota_A)^*}{\to} X^*(\Aker) \stackrel{(\Psi_A)^*}{\to} X^*(\Acok)  \stackrel{g_{ck}}{\to}
 X_*(\Aker) \stackrel{(\iota_A)_*}{\to} X_*(A)
\end{equation}
(and its inverse) furnish the required isomorphism of based root data.  Let us write this out more explicitly.
Note that $(\iota_A)_*$ and $(\iota_A)^*$ are given by:
\begin{equation*}
(\iota_A)_*(a,b,c,d)=(a,c,d,b)
\end{equation*}
and
\begin{equation*}
(\iota_A)^*(a,b,c,d)=(a,d,b,c).
\end{equation*}

The isomorphism in (\ref{Kimball}) is induced from the linear transformation $\Sigma: \Z^4 \to \Z^4$ represented by the matrix
\[ \left( \begin{array}{cccc}
1&1&0&0 \\
1&0&1&0 \\
0&1&0&1 \\
0&0&1&1 \\  \end{array} \right), \]
which gives the exact sequence
\[ 0 \to \Z \stackrel{i}{\to} \Z^4 \stackrel{\Sigma}{\to} \Z^4 \stackrel{p}{\to} \Z \to 0, \]
and thus an isomorphism

\begin{equation} \label{bat}
 X^*(A) \stackrel{\Sigma}{\isom} X_*(A). 
\end{equation}

(This agrees with the isomorphism used in Section 2.3 in \cite{Ralf}.)

We have $\Sigma(e_1-e_2)=\vt_2$ and $\Sigma(e_2-e_3)=\vt_1-\vt_2$.
Thus the based root datum above is self-dual.  Note that $\Sigma(\rho)=\frac{3}{2} \vt_1+ \half \vt_2$.  Write $\hat A$ for $A(\C)$; it is the dual torus to $A$ via the isomorphism in (\ref{bat}).

\section{Discrete Series for $\GSp_4(\R)$} \label{Discrete Series for G}

\subsection{The maximal elliptic torus $T$ of $G$}

Consider the map $\GL_2 \times \GL_2 \to \GL_4$ given by
\begin{equation*}
 \left( \begin{array}{cc}
a &b \\
c&d  \\  \end{array} \right) \times
\left( \begin{array}{cc}
e &f \\
g&h  \\  \end{array} \right) \mapsto
\left( \begin{array}{cccc}
a &&&b \\
&e&f&  \\
&g&h& \\
c&&&d  \\  \end{array} \right).
\end{equation*}

The composition of  this with the natural inclusion $S \times S \to \GL_2 \times \GL_2$ gives an embedding of  $S \times S$ into $\GL_4$.  This restricts to an embedding of $\Tker$ into $G$, whose image is an elliptic maximal torus $T$ of $G$.  Thus we have $\iota_T: \Tker \isom T$.

$T(\R)$ is the subgroup of matrices of the form
\begin{equation} \label{gagamma}
\gamma_{r,\theta_1,\theta_2}= \left( \begin{array}{cccc}
r\cos(\theta_1) && &-r\sin(\theta_1) \\
&r\cos(\theta_2)&-r\sin(\theta_2)&  \\
&r\sin(\theta_2)&r\cos(\theta_2) & \\
r\sin(\theta_1)&&&r\cos(\theta_1)  \\  \end{array} \right),
\end{equation}
for $r>0$ and angles $\theta_1,\theta_2$.

Pick an element $\xi \in G(\C)$ so that

\begin{equation*}
\Ad(\xi) \left( \begin{array}{cccc}
a &&&-b \\
&c&-d&  \\
&d&c& \\
b&&&a  \\  \end{array} \right)=\left( \begin{array}{cccc}
a+ib &&& \\
&c+id&& \\
&& c-id& \\
&&& a-ib \\  \end{array} \right),
\end{equation*}
 and put $B_T=\Ad(\xi^{-1})B_A$.  Then $B_T$ is a Borel subgroup of $G_{\C}$ containing $T$, and $\Ad(\xi): T(\C) \isom A(\C)$ is the canonical isomorphism associated to the pairs $(T,B_T)$ and $(A,B_A)$.  The definitions have been set up so that
\begin{equation*}
\iota_A \circ \Phi_{\ker}=\Ad(\xi) \circ \iota_T.
\end{equation*}

We identify $A(\C)$ as the dual torus $\hat{T}$ to $T$ via the isomorphisms
\begin{equation} \label{Tdual}
X^*(T) \stackrel{(\iota_T)^*}{\to} X^*(\Tker)  \stackrel{\Phi_{\ker}^*}{\to} X^*(\Aker)\stackrel{(\Psi_A)^*}{\to}
 X^*(\Acok)  \stackrel{g_{ck}}{\to} X_*(\Aker)  \stackrel{(\iota_A)_*}{\to} X_*(A).
\end{equation}
 
\subsection{Real Weyl Group}

We use $\Ad(\xi)$ to identify $\Omega$ with the Weyl group of $T(\C)$ in $G(\C)$.  Recall that $\Omega_{\R}$ denotes the Weyl group of $T(\R)$ in $G(\R)$.
By \cite{Warner}, Proposition 1.4.2.1, we have 
\begin{equation*}
\Omega_\R=N_{K_{\R}}(T(\R))/(T(\R) \cap K_{\R}).
\end{equation*}

When discussing maximal compact subgroups of $\GSp_4(\R)$, it is convenient to use a different realization of these symplectic groups.  Following \cite{Bessel}, take for $J$ the symplectic matrix
\begin{equation*}
\left( \begin{array}{cccc}
 &&1& \\
&&&1  \\
-1&&& \\
&-1&&  \\  \end{array} \right).
\end{equation*}
Take for $K_{\R}$ the standard maximal compact subgroup of $\GSp_4(\R)$ (the intersection of $G(\R)$ with the orthogonal group), and $SK_{\R}$ the intersection of $K_{\R}$ with $\Sp_4(\R)$.  One finds that $SK_{\R}$ is isomorphic to the compact unitary group $U_2(\R)$, and yields the Weyl group element $w_2$.  The element $\diag(1,1,-1,-1) \in  N_{G(\R)}(T(\R)) \cap K_{\R}$ gives $w_0 \in \Omega_\R$, and these two elements generate $\Omega_\R$.  This subgroup has index $2$ in $\Omega$, and does not contain the element $w_1$.

\subsection{Admissible Embeddings} Consider the following admissible embedding $\eta_B: {}^LT \to {}^LG$.  Write $\theta(z)=\frac{z}{|z|}$ for $z \in \C^\times$. 
We have ${}^LT= \hat T  \rtimes   W_{\R}$, with $\tau$ acting as the longest Weyl group element on $\hat T$.

 Writing ${}^LT=\hat{T} \times W_\R$, we put
\begin{equation*}
\eta_B(1 \times z)=\left( \begin{array}{cccc}
\theta(z)^3 &&& \\
&\theta(z)&& \\
&&\theta(z)^{-1}& \\
&&&\theta(z)^{-3} \\  \end{array} \right) \times z
\end{equation*}
for $z \in \C^\times \cong W_\C$,

\begin{equation*}
\eta_B(\hat{t} \times 1)=\hat{t} \times 1,
\end{equation*}
for $\hat{t} \in \hat{T}$, and
\begin{equation*}
\eta_B(1 \times \tau)=J \times \tau.
\end{equation*}

\subsection{Elliptic Langlands Parameters} \label{ELP}
Let $a,b$ be odd integers with $a>b>0$.  Let $t$ be an even integer.  Put $\mu=\half [(t,t,t,t)+(a,b,-b,-a)]$ and $\nu=\half[ (t,t,t,t)+(-a,-b,b,a) ]$, viewed in $X_*(\hat{T})_\C$.  Then we may define a Langlands parameter $\varphi_G: W_\R \to {}^L G$ by
\begin{equation*}
\varphi_G(z)=z^\mu \ol{z}^{\nu} \times z=|z|^t \left( \begin{array}{cccc}
\theta(z)^a &&& \\
&\theta(z)^b&& \\
&&\theta(z)^{-b}& \\
&&&\theta(z)^{-a} \\  \end{array} \right) \times z,
\end{equation*}
and $\varphi_G(\tau)=J \times \tau$.

Note that the centralizer of $\varphi_G(W_\C)$ in $\hat{G}$ is simply $\hat A$, and that $\lip \mu, \alpha \rip$ is positive for every root of $A$ that is positive for $B_A(\C)$.  Thus $\varphi_G$ determines the pair $(\hat A, \hat B_A)$, where $\hat B_A$ is simply $B_A(\C)$.

Define a Langlands parameter $\varphi_B: W_\R \to {}^LT$ by
\begin{equation*}
\varphi_B(z)=|z|^t \diag(\theta(z)^{a-3},\theta(z)^{b-1},\theta(z)^{1-b},\theta(z)^{3-a}) \times z,
\end{equation*}
and $\varphi_B(\tau)=1 \times \tau$.  Then $\varphi_G=\eta_B \circ \varphi_B$.

Let $\pi_G= \pi(\varphi_G,B_T)$ and $\pi_G'= \pi(\varphi_G,w_1(B_T))$, recalling notation from Section \ref{Langlands packets}. The $L$-packet determined by $\varphi_G$ is
\begin{equation*}
\Pi=\{ \pi_G, \pi_G' \}.
\end{equation*}
Here $\pi_G$ is called a holomorphic discrete series representation, and $\pi_G'$ is called a large discrete series representation.

The highest weight for the associated representation $E$ of $G(\C)$ is
\begin{equation*}
\lambda_B= \half(a+b-4,t-b+1,t-a+3,0) \in X^*(A).
\end{equation*}

From this we may read off the central character $\lambda_0(zI)=z^t$ for $zI \in A_G(\C)$.

\section{The Elliptic Endoscopic Group $H$} \label{Endoscopic}

\subsection{Root Data}
  Let $H$ be the cokernel of the map $\Gm \to \GL_2 \times \GL_2$ given by $t \mapsto tI \times t^{-1}I$.
  Write $A^H$ for the diagonal matrices in $H$, and $B_H$ for the pairs of upper triangular matrices in $H$.  Fix $\iota_{A^H}: \Acok \isom A^H$ given by
\begin{equation*}
(a,b,c,d) \mapsto \left( \begin{array}{cc} a & \\
&b  \\  \end{array} \right) \times
\left( \begin{array}{cc}
d & \\
&c  \\  \end{array} \right).
 \end{equation*}

Write $T_H$ for the image of $S \times S$ in $H$.  It is an elliptic maximal torus in $H$.  Fix  $\iota_{T_H}: \Tcok \isom T_H$ obtained from the map $S \times S \to \GL_2 \times \GL_2$ given by $\alpha \mapsto (\iota_S(\alpha), \iota_S(\alpha))$.  Put $B_{T_H}=\Ad(x \times x)^{-1}B_H$, a Borel subgroup of $H_\C$ containing $T_H$.  Then $\Ad(x \times x)$ is the canonical isomorphism $T_H(\C) \isom A^H(\C)$ associated to the pairs $(T_H,B_{T_H})$ and $(A^H,B_H)$.  We view $X^*(T_H)$ as the kernel of the map $p: \Z^2 \times \Z^2 \to \Z$ given by $(a,b) \times (c,d) \mapsto a+b-c-d$.  We have a basis of roots $\Delta_H$ given by
  \begin{equation} \label{Eych}
  \Delta_H=\{ (1,-1) \times (0,0), (0,0) \times (1,-1) \},
  \end{equation}
and $\rho_H=\half(1,-1) \times \half(1,-1)$.

  Furthermore, $X_*(T_H)$ is the cokernel of the map $\iota: \Z \to \Z^2 \times \Z^2$ given by $a \mapsto (a,a) \times (-a,-a)$.  We have a basis of coroots $\Delta_H^\vee$ given by
  \begin{equation} \label{Hvee}
  \Delta_H^\vee= \{ (1,-1) \times (0,0), (0,0) \times (1,-1) \},
  \end{equation}
viewed in the quotient $X_*(T_H)$.

\subsection{Dual Group $\hat{H}$}

Let $\hat{H}= \{ (g,h) \in \GL_2(\C) \times \GL_2(\C) \mid \det(g)=\det(h) \}$.
We have an inclusion $\Aker(\C) \to \hat{H}$ given by
\begin{equation*}
(a,b,c,d) \mapsto \left( \begin{array}{cc}
a & \\
&b  \\  \end{array} \right) \times
\left( \begin{array}{cc}
d & \\
&c  \\  \end{array} \right).
\end{equation*}

Write $\hat{A}^H \subset \hat{H}$ for the image.
We thus have an isomorphism $\iota_{\hat{A}^H}: \Aker(\C) \isom \hat{A}^H$.

Also write $\hat{B}_H$ for the subgroup of upper triangular matrices in $\hat{H}$.  This Borel subgroup determines a based root datum for $\hat{H}$.

Giving $\hat{H}$ the trivial $L$-action, we view it as a dual group to $H$ via the isomorphisms
\begin{equation*}
X^*(A^H) \stackrel{(\iota_{A^H})^*}{\to} X^*(\Acok) \stackrel{g_{\ck}}{\to} X_*(\Aker) \stackrel{(\iota_{\hat{A}^H})_*}{\to} X_*(\hat{A}^H), 
\end{equation*}

\begin{equation*}
X^*(\hat{A}^H) \stackrel{(\iota_{\hat{A}^H})^*}{\to} X^*(\Aker) \stackrel{g_{\kc}}{\to} X_*(\Acok) \stackrel{(\iota_{A^H})_*}{\to} X_*(A^H).
\end{equation*}

We identify $\hat{A}^H$ as the dual torus $\hat{T}_H$ to $T_H$ via the isomorphisms
\begin{equation} \label{THdual}
X^*(T_H)\stackrel{(\iota_{T_H})^*}{\to} X^*(\Tcok) \stackrel{\Phi_{\cok}^*}{\to} X^*(\Acok) \stackrel{g_{ck}}{\to}
X_*(\Aker) \stackrel{(\iota_{\hat{A}^H})^*}{\to} X_*(\hat{A}^H).
\end{equation}

Let $\eta: {}^LH \to {}^LG$ be given by
\begin{equation} \label{eta}
 \left( \begin{array}{cc}
a &b \\
c&d  \\  \end{array} \right) \times
\left( \begin{array}{cc}
e &f \\
g&h  \\  \end{array} \right) \times w \mapsto
\left( \begin{array}{cccc}
a &&&b \\
&e&f&  \\
&g&h& \\
c&&&d  \\  \end{array} \right) \times w.
\end{equation}

Let
\begin{equation*}
s=\left( \begin{array}{cc}
1 & \\
&1  \\  \end{array} \right) \times
\left( \begin{array}{cc}
-1 & \\
&-1  \\  \end{array} \right)
 \in \hat{H}.
 \end{equation*}

The image $\eta(\hat{H})$ is the connected centralizer in $\hat{G}$ of $\eta(s)$.
Thus, $(H,s,\eta)$ is an elliptic endoscopic triple for $G$.  In fact it is the only one, up to isomorphism.

Moreover note that $\eta$ restricted to $\hat{A}^H$ is given by
\begin{equation} \label{hulk}
\eta|_{\hat{A}^H}=\iota_A \circ (\iota_{\hat{A}^H})^{-1}.
\end{equation}
(Recall that $\hat{A}=A(\C)$.)

\section{Transfer for $H(\R)$}  \label{Discrete Series for H}

The goal of this section is Proposition \ref{TransferToH}, in which we identify $e_{\pi_G}^H$ and $e_{\pi_G'}^H$.  This is part of the global transfer $f^Hdh$ which is to be entered into $ST_g$ for the endoscopic group $H$.  We will recognize it using the character theory of transfer reviewed in Section \ref{Transfer}.

\subsection{Parametrization of Discrete Series}

First we must set up the Langlands parameters for discrete series representations of $H(\R)$, and describe how they transfer to $L$-packets in $G(\R)$.  Recall that we have fixed three integers $a,b,t$, with $a,b$ odd, $t$ even, and $a>b>0$.  Define the Langlands parameter $\varphi_H: W_\R \to {}^LH= \hat H \times W_{\R}$ by
\begin{equation*}
\varphi_H(z)=|z|^t\left( \begin{array}{cc}
\theta(z)^a & \\
&\theta(z)^{-a}  \\  \end{array} \right) \times
|z|^t \left( \begin{array}{cc}
\theta(z)^b & \\
&\theta(z)^{-b}  \\  \end{array} \right) \times z
\end{equation*}
for $z \in W_{\C}$, and
\begin{equation*}
\varphi_H(\tau)=\left( \begin{array}{cc}
  & 1\\
-1& \\  \end{array} \right) \times
 \left( \begin{array}{cc}
 & -1 \\
1&  \\  \end{array} \right) \times \tau.
\end{equation*}

Then $\varphi_H$ determines the pair $(\hat{A}_H,\hat{B}_H)$.  The $L$-packet is a singleton $\{ \pi_H \}$. The corresponding representation $E_H$ of $H(\C)$ has highest weight
\begin{equation*}
\lambda_H= \half(t+a-1,t-a+1) \times \half(t+b-1,t-b+1).
\end{equation*}
From this we read off the central character $\lambda_0^H(z_1,z_2)=(z_1z_2)^t$.  Most importantly, we have $\varphi_G=\eta \circ \varphi_H$.

There is another Langlands parameter $\varphi_H'$ given by
\begin{equation*}
\varphi_H'(z)=|z|^t\left( \begin{array}{cc}
\theta(z)^b & \\
&\theta(z)^{-b}  \\  \end{array} \right) \times
|z|^t \left( \begin{array}{cc}
\theta(z)^a & \\
&\theta(z)^{-a}  \\  \end{array} \right) \times z,
\end{equation*}
and by $\varphi_H'(\tau)=\varphi_H(\tau)$ as above.

Again the $L$-packet is a singleton $\{ \pi_H' \}$.  The corresponding representation $E_H'$ has highest weight
\begin{equation*}
\lambda_H'=\half(t+b-1,t-b+1) \times \half(t+a-1,t-a+1),
\end{equation*}
and the same central character $\lambda_0^H$ as $E_H$.

Let $\varphi_G'=\eta \circ \varphi_H'$.  Then $\varphi_G'=\Int(w_2) \circ \varphi_G$, so it is equivalent to $\varphi_G$.   In particular, both $L$-packets $\{ \pi_H \}$ and $\{ \pi_H' \}$ transfer to $\Pi=\{ \pi_G, \pi_G' \}$.

\subsection{Alignment}

Recall the definition of alignment from Section \ref{Transfer}.

\begin{lemma} \label{Pallavi} Define $j: T_H \isom T$ by $j=\iota_T \circ \Psi_T \circ (\iota_{T_H})^{-1}$.   Then $(j,B_T,B_{T_H})$ is aligned with $\varphi_H$, and $(j,w_1B_T,B_{T_H})$ is aligned with $\varphi_H'$. \end{lemma}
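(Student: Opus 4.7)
My plan is to verify the commutativity of the square (\ref{alignment}) by a diagram chase: I unwind each of its four arrows under the standing identifications of Sections \ref{Real Tori}--\ref{Endoscopic} and then reduce the first alignment to the compatibility (\ref{100students}). The second alignment will follow from the first by replacing Borels with their Weyl conjugates, using $\varphi_G' = \Int(w_2) \circ \varphi_G$.

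First I would identify the four tori. Using (\ref{Tdual}) and (\ref{THdual}) I have $\hat{T} = \hat{A}$ and $\hat{T}_H = \hat{A}^H$. Since $a > b > 0$, the cocharacter exponent $\mu$ for $\varphi_G$ is $\hat{B}_A$-dominant, so $\varphi_G$ determines the pair $(\hat{A}, \hat{B}_A)$; similarly $\varphi_H$ determines $(\hat{A}^H, \hat{B}_H)$. In particular $\hat{S} = \hat{A}$ and $\hat{S}_H = \hat{A}^H$. The chains (\ref{Tdual}) and (\ref{THdual}) were built precisely from the pairs $(B_T, B_A)$ and $(B_{T_H}, B_H)$, so the two horizontal arrows of (\ref{alignment}) -- which are the isomorphisms of (\ref{determined}) for these pairs -- become identity maps under our identifications. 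Verifying this is a direct check that the chain (\ref{Tdual}) realizes the duality iso $X^*(T) \to X_*(\hat{A})$ for the pair $(T, B_T)$ and $(\hat{A}, \hat{B}_A)$, which in turn follows from the equality $\iota_A \circ \Phi_{\ker} = \Ad(\xi) \circ \iota_T$ set up in Section \ref{Discrete Series for G}.

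Next I would compute the vertical arrows. The right arrow $\eta$, restricted to $\hat{A}^H$, is $\iota_A \circ (\iota_{\hat{A}^H})^{-1}$ by (\ref{hulk}), which factors through $\Aker(\C) \to \Acok(\C)$ via $\Psi_A(\C)^{-1}$ under the conventions of Section \ref{Endoscopic}. The left arrow $\hat{j}$, dual to $j = \iota_T \circ \Psi_T \circ (\iota_{T_H})^{-1}$, unwinds through the same $\Tker, \Tcok, \Aker, \Acok$ squares of Section \ref{Real Tori}, using $\Phi_{\ker}$, $\Phi_{\cok}$, and $\Psi_T(\C)^{-1}$. Commutativity of the square then becomes an identity between two composites of these natural maps, which agree precisely by (\ref{100students}): $\Phi_{\ker} \circ \Psi_T(\C) = \Psi_A(\C) \circ \Phi_{\cok}$.

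For the second alignment I use $\varphi_G' = \Int(w_2) \circ \varphi_G$ from Section \ref{Discrete Series for H}, so $\varphi_G'$ determines the pair $(\hat{A}, w_2 \hat{B}_A w_2^{-1})$. Under the duality $\Sigma$ of (\ref{bat}), which swaps the short and long roots of the $C_2$ root system, the element $w_2$ on the $\hat{G}$-side (a short-root reflection) corresponds to $w_1$ on the $G$-side (a long-root reflection). Thus replacing $B_T$ by $w_1 B_T$ on the $G$-side precisely compensates for the change of $\hat{B}$ induced by passing from $\varphi_G$ to $\varphi_G'$, and the rest of the diagram chase is identical. The main obstacle throughout is the careful bookkeeping of the directions of the various dualities $\Phi_{\ker}, \Phi_{\cok}, \Psi_A, \Psi_T$ and of the Weyl correspondence under $\Sigma$; once these are all explicit on complex points, both alignments follow from (\ref{100students}) together with the short/long interchange.
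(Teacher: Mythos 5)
Your proposal is correct and follows essentially the same route as the paper's proof: you reduce both alignments to showing the horizontal arrows of (\ref{alignment}) are identities (since $\varphi_G$, $\varphi_G'$, $\varphi_H$, $\varphi_H'$ determine the pairs $(\hat{A},\hat{B}_A)$, $(\hat{A},\Int(w_2)\hat{B}_A)$, $(\hat{A}^H,\hat{B}_H)$) and to the computation $\hat{j}=\eta^{-1}$ via the chains (\ref{Tdual}), (\ref{THdual}) together with (\ref{100students}) and (\ref{hulk}). Your explicit bookkeeping of the $w_1 \leftrightarrow w_2$ interchange under $\Sigma$ for the second alignment is a welcome elaboration of a point the paper states only tersely, but it is the same argument.
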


\begin{proof}
Since the parameter $\varphi_G$ gives the pair $(\hat{A},\hat{B})$, the parameter $\varphi_G'$ gives the pair $(\hat{A},w_1 \hat{B})$, and since $\varphi_H$ and $\varphi_H'$ both give $(\hat{A},\hat{B}_H)$, the horizontal maps in (\ref{alignment})
are identities.
The map $\hat{j}:  \hat{T} \to \hat{T}_H$ may be computed by composing the isomorphism $X_*(\hat{T}) \isom X^*(T)$ in (\ref{Tdual}) with the induced map $j^*: X^*(T) \isom X^*(T_H)$ and finally with the inverse of the isomorphism $X_*(\hat{T}_H) \isom X^*(T_H)$ in (\ref{THdual}).  Using Equations (\ref{100students}) and (\ref{hulk}), one finds that
\begin{equation*}
\hat{j}=\iota_{\hat{A}_H} \circ (\iota_A)^{-1}= \eta^{-1},
\end{equation*}
as desired.

\end{proof}

\subsection{Transfer for $H_\R$}

\begin{prop} \label{TransferToH}  Let $\pi_G=\pi(\varphi_G,B_T)$ and $\pi_G'= \pi(\varphi_G, \omega^{-1}(B_T))$ as described in Section \ref{ELP}.  Then (using notation from Section \ref{PC}) we may take $e_{\pi_G}^H=e_{\pi_H}+ e_{\pi_H'} $, where $\pi_H$ (resp., $\pi_H'$) is the discrete series representation determined by $\varphi_H$ (resp., by $\varphi_H'$) as above.  Furthermore, we may take $e_{\pi_G'}^H=-e_{\pi_G}^H$.
\end{prop}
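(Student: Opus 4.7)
The plan is to apply the character identity (\ref{charidentity}) from Section \ref{Transfer}, using the pseudocoefficient property of $e_{\pi_G}$ to reduce the identification of $e_{\pi_G}^H$ to a computation of Shelstad's transfer factors $\Delta_\infty$, which become explicit via Lemma \ref{Pallavi}.

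Concretely, since $e_{\pi_G}$ is cuspidal and picks out only $\tilde\pi_G \in \Pi$, the character identity expresses $\tr \Pi_H^*(e_{\pi_G}^H) = (-1)^{q(G)} \Delta_\infty(\varphi_H^*, \tilde\pi_G)$ for any tempered parameter $\varphi_H^*$ of $H(\R)$. Consequently, the measure $e_{\pi_G}^H$ is characterized, modulo the kernel of stable distributions, by this collection of numbers. The only $\varphi_H^*$ that contribute nonzero traces are those with $\eta \circ \varphi_H^* \sim \varphi_G$, and from Section \ref{Discrete Series for H} these are precisely $\varphi_H$ and $\varphi_H'$, each with singleton packet $\{\pi_H\}$, respectively $\{\pi_H'\}$. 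The proof therefore reduces to computing $\Delta_\infty(\varphi_H, \tilde\pi_G)$ and $\Delta_\infty(\varphi_H', \tilde\pi_G)$ and matching them against the trace values of the candidate $e_{\pi_H} + e_{\pi_H'}$.

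I would then compute these factors using Kottwitz's formula $\Delta_\infty(\varphi_H, \pi(\varphi_G, \omega^{-1}(B_T))) = \langle a_\omega, \hat j^{-1}(s) \rangle$, which applies by Lemma \ref{Pallavi} with $\hat j^{-1}(s) = \eta(s) = \diag(1,-1,-1,1) \in \hat T$. For the alignment $(j, B_T, B_{T_H})$ of $\varphi_H$, the case $\omega = 1$ yields $\pi_G$ and the case $\omega = w_1$ yields $\pi_G'$; for the alignment $(j, w_1 B_T, B_{T_H})$ of $\varphi_H'$ provided by the second half of Lemma \ref{Pallavi}, the roles of these two Weyl elements swap. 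After evaluating $a_\omega$ in each of the four cases, I would collate the signs with the trace values of $e_{\pi_H} + e_{\pi_H'}$ on $\pi_H$ and $\pi_H'$ to confirm the first identity. For $e_{\pi_G'}^H = -e_{\pi_G}^H$, I would repeat the entire calculation with $e_{\pi_G'}$ in place of $e_{\pi_G}$: this simply replaces each $a_\omega$ by $a_\omega \cdot a_{w_1}$, flipping every transfer factor by the single sign $\langle a_{w_1}, \eta(s) \rangle$ and yielding the claimed negative.

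The principal obstacle is the explicit evaluation of the character $a_\omega$ on $\eta(s) = \diag(1,-1,-1,1)$, particularly for $\omega = w_1$. This is a $\pm 1$ pairing arising from the 1-cocycle $\omega \mapsto \omega \cdot \tau(\omega)^{-1}$ defining $a_\omega$ in \cite{K90}, and the entire sign structure of the proposition, including the coefficient $+1$ in front of both $e_{\pi_H}$ and $e_{\pi_H'}$ and the global minus sign relating $e_{\pi_G'}^H$ to $e_{\pi_G}^H$, hinges on pinning it down correctly. Once that evaluation is established, the remaining identification of $e_{\pi_G}^H$ with $e_{\pi_H} + e_{\pi_H'}$ is a routine match of character values on the two singleton discrete series packets of $H(\R)$.
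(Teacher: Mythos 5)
Your proposal follows essentially the same route as the paper: invoke the characterization (\ref{charidentity}) of the transfer, use Lemma \ref{Pallavi} to apply Kottwitz's formula $\Delta_{\infty}(\varphi_H,\pi(\varphi_G,\omega^{-1}(B_T)))=\lip a_{\omega},\hat j^{-1}(s)\rip$ with $\hat j^{-1}(s)=\eta(s)$, note that the two alignments differ by $w_1$ so the factors for $\pi_G$ and $\pi_G'$ differ by the sign $\lip a_{w_1},\eta(s)\rip$, and then match traces against $e_{\pi_H}+e_{\pi_H'}$. The one step you leave open, the evaluation $\lip a_{\omega},\eta(s)\rip=1$ for $\omega\in\Omega_{\R}$ and $-1$ otherwise, is exactly the fact the paper itself quotes from \cite{K90} at this point, so your outline is correct and complete in the same sense as the paper's argument.
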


\begin{proof}
By Lemma \ref{Pallavi}, we may use 
\begin{equation*}
\Delta_{\infty}(\varphi_H,\pi(\varphi_G,\omega^{-1}(B_T)))= \lip a_\omega, \hat j^{-1}(s) \rip
\end{equation*}
and 
\begin{equation*}
\Delta_{\infty}(\varphi_H',\pi(\varphi_G,\omega^{-1}(w_1B_T)))= \lip a_{w_1 \omega}, \hat j^{-1}(s) \rip
\end{equation*}
for $\omega \in \Omega$.
In both cases, this is given by   
\begin{equation*}
\lip a_{\omega},s \rip= \begin{cases} & 1, \text{ if } \omega \in \Omega_{\R}, \\
              & -1, \text{ if } \omega \notin \Omega_{\R}. \end{cases}
\end{equation*}
Note that $ \lip a_{w_1 \omega}, \hat j^{-1}(s) \rip=- \lip a_\omega, \hat j^{-1}(s) \rip$.
Therefore the characterization (\ref{charidentity}) becomes, for a general measure $f_{\infty}dg_{\infty}$ at the real place,

\begin{equation*}
\begin{split}
\Theta_{\pi_H}(f_{\infty}^Hdh_{\infty}) &= \sum_{\pi \in \Pi(\varphi_G)} \Delta_{\infty}(\varphi_H,\pi) \Theta_{\pi}(f_{\infty}dg_{\infty}) \\
&=\Theta_{\pi_G}(f_{\infty} dg_{\infty})-\Theta_{\pi_G'}(f_{\infty}dg_{\infty})
\end{split}
\end{equation*}
and similarly
\begin{equation*}
 \Theta_{\pi_H'}(f_{\infty}^Hdh_{\infty})  =\Theta_{\pi_G}(f_{\infty}dg_{\infty})-\Theta_{\pi_G'}(f_{\infty}dg_{\infty}).
\end{equation*}
In our case, we obtain 
\begin{equation*}
\Theta_{\pi_H}(e_{\pi_G}^H)=\Theta_{\pi_H'}(e_{\pi_G}^H)=(-1)^{q(G)},
\end{equation*}
and
\begin{equation*}
\Theta_{\pi_H}(e_{\pi_G'}^H)=\Theta_{\pi_H'}(e_{\pi_G'}^H)=-(-1)^{q(G)}.
\end{equation*}

The proposition follows.
\end{proof}

\section{Levi Subgroups} \label{Levi Subgroups}

\subsection{Levi Subgroups}

We give the standard Levi subgroups of $G$, which are those of the parabolic subgroups containing $B_A$.  We have the group $A$, the group $G$ itself, and the following two Levi subgroups:

\begin{equation*}
M_1= \left\{ \left( \begin{array}{cc}
g & \\
& \lambda g \\  \end{array} \right) \mid g \in \GL_2, \lambda \in \Gm \right\}.
\end{equation*}

\begin{equation*}
M_2= \left\{ \left( \begin{array}{ccc}
a && \\
&g& \\
&& b \\  \end{array} \right) \mid g \in \GL_2, a,b \in \Gm, \det(g)=ab \right\}.
\end{equation*}
 Note that both $M_1$ and $M_2$ are isomorphic to $\Gm \times \GL_2$.

The group $H$ also has four Levi subgroups, namely $A^H$, the group $H$ itself, the image $M_1^H$ of $\GL_2 \times A_0$ in $H$, and the image $M_2^H$ of $A_0 \times \GL_2$ in $H$.  Note that both $M_1^H$ and $M_2^H$ are isomorphic to $\GL_2 \times \Gm$.

\subsection{Miscellaneous constants}

We now compute the invariants from Section \ref{Various Invariants} for the Levi subgroups of $G$ and $H$.

First, we compute the various $k(M)$.  When $M$ is the split torus $A$ its derived group is trivial and so $k(A)=1$.  For $i=1,2$, the Levi subgroup $M_i$ is isomorphic to $\GL_2 \times \Gm$, and the torus is isomorphic to $S \times \Gm$.
Since $S$ and $\Gm$ have trivial first cohomology, again $k(M_1)=1$.

\begin{lemma} We have $k(G)=2$.
\end{lemma}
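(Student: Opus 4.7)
The plan is to identify $T_{\simp}$ explicitly, compute the relevant Galois cohomology, and read off the order of the image of $H^1(\R,T_{\simp}) \to H^1(\R,T)$.

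First I will observe that since $G_{\der}=\Sp_4$ is already simply connected, $G_{\simp}=\Sp_4$ and $T_{\simp}$ equals the intersection $T \cap \Sp_4$, i.e.\ the kernel of the restriction to $T$ of the similitude character $\mu: G \to \Gm$.  From the parametrization (\ref{gagamma}) one reads off $\mu(\gamma_{r,\theta_1,\theta_2})=r^2$; equivalently, under $\iota_T: \Tker \isom T$, the similitude pulls back to $(\alpha,\beta) \mapsto \Nm(\alpha)=\Nm(\beta)$.  Hence $T_{\simp} \cong T_1 \times T_1$, where $T_1 = \ker(\Nm: S \to \Gm)$, and we have a short exact sequence of $\R$-tori
\begin{equation*}
1 \to T_1 \times T_1 \to T \stackrel{\mu}{\to} \Gm \to 1.
\end{equation*}

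For each factor $T_1$, the defining sequence $1 \to T_1 \to S \stackrel{\Nm}{\to} \Gm \to 1$ together with Hilbert 90 (giving $H^1(\R,S)=0$) yields $H^1(\R,T_1) \cong \R^\times/\Nm(\C^\times) = \R^\times/\R_{>0}^\times \cong \Z/2$, and hence $H^1(\R,T_{\simp}) \cong (\Z/2)^2$.  The long exact sequence attached to the above short exact sequence for $T$ then reads
\begin{equation*}
T(\R) \stackrel{\mu}{\to} \R^\times \stackrel{\delta}{\to} H^1(\R,T_{\simp}) \to H^1(\R,T) \to H^1(\R,\Gm)=0.
\end{equation*}
Since $\mu(T(\R))=\R_{>0}^\times$ has index two in $\R^\times$, the connecting map $\delta$ has image of order two; by exactness, the kernel of $H^1(\R,T_{\simp}) \to H^1(\R,T)$ has order two.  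Therefore the image has order $4/2=2$, and $k(G)=2$.

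No step here looks delicate: the only place where anything can go wrong is tracking the similitude character through the isomorphism $\iota_T: \Tker \isom T$ and confirming $\mu(T(\R))=\R_{>0}^\times$, and both are essentially immediate from the definitions.  Everything after that is a one-line application of Hilbert 90 plus the snake lemma.
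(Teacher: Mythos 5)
Your proof is correct and follows essentially the same route as the paper: both identify $T_{\simp}$ with $T_1 \times T_1$ sitting in the exact sequence $1 \to T_1 \times T_1 \to T \to \Gm \to 1$, note $H^1(\R,T_{\simp})$ has order $4$, and use the long exact sequence (with the image of $T(\R) \to \R^\times$ of index two) to conclude the image in $H^1(\R,T)$ has order $2$. The only cosmetic difference is that you phrase the map $T \to \Gm$ as the similitude character and compute the image order directly, while the paper uses $\Nm \circ \mathrm{pr}_1$ and notes surjectivity onto $H^1(\R,T)$ of order $2$; these are the same argument.
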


Write $T$ as before for the elliptic torus of $G$.

\begin{proof}
Recall that $T_1$ is the kernel of $\Nm$ and $H^1(\R,T_1)$ has order $2$.

Recall that the torus $T$ is isomorphic to the kernel of the map
\[ S \times S \to \Gm \]
given by
\[ (\alpha,\beta) \mapsto \Nm(\alpha/\beta). \]
Projection to the first (or second) component followed by $\Nm$ gives an exact sequence
\begin{equation} \label{T_0}
 1 \to T_1 \times T_1 \to T \to \Gm \to 1.
\end{equation}

We have that $G_{\simp}=G_{\der}$ and the inclusion $T_{\simp} = G_{\der} \cap T \subset T$ may be identified with the map $T_1 \times T_1 \to T$ in the sequence above. In particular, $H^1(\R,T_{\simp})$ has order $4$.

Taking the cohomology of (\ref{T_0}) gives the exact sequence
\[ 1 \to \R^\times/\R^{\times 2} \to H^1(\R,T_{\simp}) \to H^1(\R,T) \to 1, \]
from which we conclude that $H^1(\R,T_{\simp}) \to H^1(\R,T)$ is surjective and $H^1(\R,T)$ has order $2$.
\end{proof}

One must also compute $k(M_H)$ for Levi subgroups $M_H$ of $H$.  The intermediate Levi subgroups are again isomorphic to $\GL(2) \times \Gm$, and for $A_H$ the derived group is trivial.  So $k(M_H)=1$ for each of these.

\begin{lemma} We have $k(H)=1$. \end{lemma}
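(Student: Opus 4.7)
My plan is to identify $T_{H,\simp}$ very explicitly and then observe that the natural map $T_{H,\simp} \to T_H$ factors through an induced torus whose first real cohomology vanishes, so that the image of the relevant map is automatically trivial.

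First I would determine $H_{\simp}$. Since $H = (\GL_2 \times \GL_2)/\Gm$ with $\Gm$ sitting as $(tI, t^{-1}I)$, the derived group $H_{\der}$ is the image of $\SL_2 \times \SL_2$, and the natural map $\SL_2 \times \SL_2 \to H_{\der}$ is a simply connected cover: its kernel consists of the pairs $(tI, t^{-1}I)$ that lie in $\SL_2 \times \SL_2$, i.e. $t = \pm 1$, and so is $\mu_2$. Hence $H_{\simp} = \SL_2 \times \SL_2$. By construction $T_H$ is the image of $S \times S$ in $H$, and since $\iota_S(\Gm) \subseteq S$ already absorbs the central $\Gm$, the full preimage of $T_H$ in $\GL_2 \times \GL_2$ is exactly $S \times S$. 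Intersecting with $\SL_2 \times \SL_2$, and using $S \cap \SL_2 = T_1$ (from Section \ref{Real Tori}, since $\det = \Nm$ on $\iota_S(S)$), gives $T_{H,\simp} = T_1 \times T_1$. Moreover, the induced map $\rho|_{T_{H,\simp}} : T_{H,\simp} \to T_H$ is simply the restriction to $T_1 \times T_1$ of the quotient $S \times S \twoheadrightarrow T_H$.

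With this in hand the cohomological step is immediate: $\rho|_{T_{H,\simp}}$ factors as
\[ T_1 \times T_1 \hookrightarrow S \times S \twoheadrightarrow T_H. \]
Since $S = \Res^{\C}_{\R} \Gm$, Shapiro's lemma together with Hilbert~90 give $H^1(\R, S) = H^1(\C, \Gm) = 0$, and therefore $H^1(\R, S \times S) = 0$. Consequently the induced map $H^1(\R, T_{H,\simp}) \to H^1(\R, T_H)$ factors through the zero group, so its image is trivial and $k(H) = 1$, as required. The only real work in the argument is the bookkeeping identification of $T_{H,\simp}$ in the previous paragraph; the vanishing itself is a one-line consequence of Hilbert~90 for the induced torus $S$, and I do not anticipate any serious obstacle.
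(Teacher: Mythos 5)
Your argument is correct and is essentially the paper's own proof: the paper likewise notes $H_{\simp}=\SL_2\times\SL_2$, $T_{H,\simp}=T_1\times T_1$, and that the map $T_{H,\simp}\to T_H$ factors through $S\times S$, whose vanishing $H^1(\R,S\times S)=0$ (Shapiro/Hilbert 90) forces the image of $H^1(\R,T_{H,\simp})\to H^1(\R,T_H)$ to be trivial. You have simply made explicit the bookkeeping (the identification of the preimage of $T_H$ and of $T_{H,\simp}$) that the paper leaves implicit.
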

\begin{proof}
We have $T=P(S \times S)$, $H_{\simp}=\SL_2 \times \SL_2$, and $T_{\simp}=T_1 \times T_1$.  The map $T_{\simp} \to T$ factors through $T_1 \times T_1 \to S \times S$.  As above we conclude that $k(H)=1$.
\end{proof}

Secondly, we compute the Tamagawa numbers.  Recall that
\begin{equation*}
\tau(G)=|\pi_0(Z(\hat{G})^{\Gamma_\Q})| \cdot |\ker^1(\Q,Z(\hat{G}))|^{-1}.
\end{equation*}

\begin{prop} We have $\tau(M)=1$ for all Levi subgroups of $G$, and for all proper Levi subgroups of $H$, and $\tau(H)=2$. \end{prop}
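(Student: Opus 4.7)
The plan is to apply the formula
\begin{equation*}
\tau(M) = |\pi_0(Z(\hat M)^{\Gamma_\Q})| \cdot |\ker^1(\Q, Z(\hat M))|^{-1}
\end{equation*}
case by case, exploiting the fact that all the groups in sight are $\Q$-split so that $\Gamma_\Q$ acts trivially on every dual group we encounter. Under this simplification, the first factor is just the number of connected components of $Z(\hat M)$, and the $\ker^1$ term will collapse because $Z(\hat M)$ will be built from $\Gm$'s and a single $\mu_2$, for which $\ker^1$ vanishes over $\Q$ (by Hilbert~90 for $\Gm$ and the Hasse principle for squares for $\mu_2$).

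First I would dispense with the torus cases $M = A$ and $M = A^H$: here $\hat M$ is a split torus, hence $Z(\hat M) = \hat M$ is connected and $\ker^1$ is trivial, giving $\tau = 1$. Next I would handle the two intermediate Levi subgroups $M_1, M_2$ of $G$ and $M_1^H, M_2^H$ of $H$: each is $\Q$-isomorphic to $\Gm \times \GL_2$, so the dual group is again $\Gm \times \GL_2$, with center $\Gm \times \Gm$, which is connected with trivial $\ker^1$. For $M = G$ itself, $\hat G = \GSp_4(\C)$ has center consisting of scalar matrices, so $Z(\hat G) \cong \Gm$ is connected and yields $\tau(G) = 1$.

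The interesting case is $H$. From the description
\begin{equation*}
\hat H = \{ (g,h) \in \GL_2(\C) \times \GL_2(\C) \mid \det g = \det h \},
\end{equation*}
the center consists of pairs $(\lambda I, \mu I)$ with $\lambda^2 = \mu^2$, i.e.\ $\mu = \pm \lambda$. Thus $Z(\hat H)$ is an extension of $\mu_2$ by the connected torus $\{(\lambda I, \lambda I)\} \cong \Gm$, so $\pi_0(Z(\hat H)) \cong \Z/2\Z$. It remains to see that $\ker^1(\Q, Z(\hat H))$ is trivial: using the split exact sequence of $\Gamma_\Q$-modules with trivial action
\begin{equation*}
1 \to \Gm \to Z(\hat H) \to \mu_2 \to 1,
\end{equation*}
the long exact sequence together with Hilbert~90 reduces this to showing $\ker^1(\Q,\mu_2) = 1$, which is the classical statement that an element of $\Q^\times$ that is a square in every $\Q_v^\times$ is already a square in $\Q^\times$. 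This gives $\tau(H) = 2 \cdot 1 = 2$, completing the proposition.

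The one place one has to be slightly careful is the component-group calculation for $Z(\hat H)$; everything else is an immediate application of Hilbert~90 or connectedness. There is no serious obstacle, only bookkeeping.
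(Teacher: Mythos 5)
Your overall route is the same as the paper's: apply Kottwitz's formula $\tau(M)=|\pi_0(Z(\hat{M})^{\Gamma_\Q})|\cdot|\ker^1(\Q,Z(\hat{M}))|^{-1}$, note that every group in sight is split so the Galois action on the dual centers is trivial, compute the component groups (your identification $Z(\hat{H})=\{(\lambda I,\pm\lambda I)\}\cong \C^\times\times\mu_2$, hence $\pi_0\cong\Z/2\Z$, is correct and is exactly the point the paper leaves as ``straightforward''), and show the $\ker^1$ terms vanish. The final values $\tau(M)=1$, $\tau(H)=2$ are right.

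However, your justification of the vanishing of $\ker^1$ for the $\Gm$-factors is not valid as stated. In Kottwitz's formula the coefficient group is $Z(\hat{M})$, a subgroup of the \emph{complex} points of the dual group, carrying the (here trivial) $\Gamma_\Q$-action; so the connected factors contribute $H^1(\Gamma_\Q,\C^\times)=\Hom_{\cont}(\Gamma_\Q,\C^\times)$, the group of finite-order Galois characters, which is far from trivial. Hilbert 90 concerns $\ol{\Q}^\times$ with its natural Galois action and simply does not apply to these coefficients, so the step ``the $\Gm$-parts die by Hilbert 90'' would fail. What is true, and what you actually need, is only that the \emph{locally trivial} classes vanish: a continuous character $\Gamma_\Q\to\C^\times$ factors through a finite quotient, and if it is trivial on every decomposition group it kills all Frobenius elements, hence is trivial by Chebotarev density. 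This is precisely the paper's argument, and it handles your $\mu_2$-factor at the same time (since $\mu_2\subset\C^\times$), making the separate appeal to the Hasse principle for squares correct but unnecessary. With that repair your proof agrees with the paper's.
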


\begin{proof} For each of these groups, $Z(\hat{M})$ is either the group $\C^\times$ with trivial $\Gamma_\Q$-action, or a product of such groups.  By the Chebotarev Density Theorem, the homomorphism

\begin{equation*}
\Hom(\Gamma_{\Q},\C^\times) \to \prod_v \Hom(\Gamma_{\Q_v},\C^{\times})
\end{equation*}

is injective.  So $|\ker^1(\Q,Z(\hat{G}))|$ is trivial for our examples.  Computing the component group of each $Z(\hat{M})$ is straightforward.

\end{proof}

The quantities $n^G_M$ are easy to compute, using $N_G(M) \subseteq N_G(Z(M))$.  When $M$ is a maximal torus, $n^G_M$ is of course the order of the Weyl group.
For the intermediate cases, one finds that $n^G_{M_i}=n^H_{M^H_i}=2$.
 
For $\gm=1$, we have $\ol{\iota}^M(\gm)=1$ for each $M$, since each $M$ is connected.
Note that for Levi subgroups $M$ of $G$, all proper Levi subgroups $M$ of $H$, and all semisimple elements $\gm$ in $G$ or $H$,
we have $\ol{\iota}^M(\gm)=1$ since in all these cases the derived groups are simply connected.

Finally, we compute $\iota(G,H)$, which we recall is given by
\begin{equation*}
\iota(G,H)= \tau(G) \tau(H)^{-1} |\Out(H,s,\eta)|^{-1}.
\end{equation*}
One may compute the order of $\Out(H,s,\eta)$ through Section 7.6 of \cite{K84}, which shows that this set is in bijection with $\bigwedge(\eta(s),\rho)$, in the notation of that paper.  This last set is represented by $\{1,g \}$, where
\begin{equation*}
g= \left( \begin{array}{cccc}
&1&& \\
1&&&  \\
&&&1 \\
&&1&  \\  \end{array} \right).
\end{equation*}

The conclusion is that $\iota(G,H)=\dfrac{1}{4}$.

\section{Computing $S\Phi_M$ for Levi Subgroups of $G$} \label{Computing for G Levis}

Recall from Proposition \ref{MPI} the formula

\begin{equation*} 
 \Phi_M(\gm, \Theta^E)=(-1)^{q(L)}|\Omega_L| \sum_{\omega \in \Omega^{LM}} \eps(\omega) \tr(\gm; V^M_{\omega(\lambda_B+\rho_B)-\rho_B}),
\end{equation*}
for $\gm \in T_e(\R)$.

In this section, the maximal torus will be conjugate to $A$, and the character group will be identified with $X^*(A)$.   We specify an inner product we use on $X^*(A)_\R$ for the Weyl Dimension Formula  (Proposition \ref{Weyl}).  

\begin{defn} The usual dot product gives an inner product $(,)$ on $X_*(A)_\R$, viewing it as a hypersurface in $\R^4$.

Consider the isomorphism
\begin{equation*}
\pr: X^*(A)_\R \isom X_*(A)_\R
\end{equation*}
given by
\begin{equation*}
\pr(a,b,c,d)=(a,b,c,d)-\frac{a+d-b-c}{4}(1,-1,-1,1),
\end{equation*}
and let
\begin{equation*}
\lip \lambda, \mu \rip=(\pr(\lambda),\pr(\mu)).
\end{equation*}
\end{defn}

For instance,
\begin{equation*}
\pr(\lambda_B)=\frac{1}{4} (a+b+t-4,a-b+t-2,-a+b+t+2,-a-b+t+4).
\end{equation*}
It will also be necessary to compute $\Omega^{LM}$ for each example. Recall that this is the set of $w \in \Omega$ so that $w^{-1}\alpha >0$ for positive roots $\alpha$ which are either real or imaginary.

\subsection{The term $\Phi_G$}

By Proposition \ref{SpalPhiformula} we have $\Phi_G(\gm,\Theta^E)=\tr(\gm; E)$.  Using the Weyl dimension formula, we compute
\[ S\Phi_G(1,e_{\pi_G})= -\frac{1}{24} ab(a+b)(a-b) \ol{v}(G)^{-1}. \]

\subsection{The term $S\Phi_{M_1}$}
Consider the torus $T_{M_1}$ given by
\[ \left(\begin{array}{cccc}
a & b & & \\
-b & a && \\
&& \lambda a & \lambda b\\
&& -\lambda b & \lambda a\\
\end{array} \right), \]
with $a^2+b^2 \neq 0$ and $\lambda \neq 0$.  This is an elliptic torus in $M_1$.

There is one positive real root, $e_1-e_3$, and one positive imaginary root, $\alpha_{M_1}=e_1-e_2$.  We have $\Omega^{LM}=\{1,w_1\}$, $q(L)=1$, and $|\Omega_L|=2$.  This gives
\begin{equation*}
 \Phi_{M_1}(1, \Theta^E)= (-2) \left[ \dim_\C V^{M_1}_{\lambda_B}- \dim_\C V^{M_1}_{\lambda_B'} \right],
\end{equation*}
where $\lambda_B'=\half(a+b-4,t-a+1,t-b+3,0)\in X^*(T)$.

Note that $\lip \alpha_{M_1},\lambda_B \rip=\half (b-1)$.  The Weyl dimension formula yields $\dim_{\C} V_{\lambda_B}^{M_1}=b$ and $\dim_{\C} V_{\lambda_B'}^{M_1}=a$.
Thus,
\[ S \Phi_{M_1}(1,e_{\pi_G})=-(b-a)\ol{v}(M_1)^{-1}. \]

\subsection{The term $S\Phi_{M_2}$}

Consider the torus $T_{M_2}$ given by
\[ \left(\begin{array}{cccc}
s &  & & \\
 & a &-b& \\
&b&a  & \\
&&  & t\\
\end{array} \right), \]
with $st=a^2+b^2 \neq 0$. This is an elliptic torus in $M_2$.

We may conjugate this in $G(\C)$ to matrices of the form
\begin{equation*}
\gamma= \left( \begin{array}{cccc}
s &&& \\
&a+ib&&  \\
&& a-ib& \\
&&& t  \\  \end{array} \right)
\end{equation*}
in $A(\C)$.  Composing the roots of $A$ with this composition, we determine the positive imaginary root $\alpha_{M_2}=e_2-e_3$.
We have $\Omega^{LM}=\{1,w_2 \}$.

This gives
\begin{equation*}
 \Phi_{M_2}(1, \Theta^E)= (-2) \left[ \dim_\C V^{M_2}_{\lambda_B}- \dim_\C V^{M_2}_{\lambda_B''} \right],
\end{equation*}
where $\lambda_B''=\half(t-b-1,a+b-2,0,t-a+3)\in X^*(T)$.  Note that
\begin{equation*}
\pr(\lambda_B'')=\frac{1}{4}(t+a-b-4,t+a+b-2,t-a-b+2,t-a+b+4).
\end{equation*}

 The Weyl dimension formula yields $\dim_{\C} V_{\lambda_B}^{M_2}=\half(a-b)$ and $\dim_{\C} V_{\lambda_B''}^{M_2}=\half(a+b)$, and so
\begin{equation*}
S\Phi_{M_2}(1, e_{\pi_G})=b \cdot \ol{v}(M_2)^{-1}.
\end{equation*}

\subsection{The term $S\Phi_A$}

By Proposition \ref{SpalPhiformula}, we have $\Phi_A(1, \Theta^E)= (-1)^{q(G)} |\Omega_G|=-8$, and so
\begin{equation*}
S \Phi_A(1, e_{\pi_G})=4 \ol{v}(A)^{-1}.
\end{equation*}

\section{Computing $S\Phi_{M_H}$ for Levi subgroups of $H$} \label{Computing for H Levis}

Since $e_{\pi_G}^H=e_{\pi_H}+e_{\pi_H'}$, we have
\begin{equation*}
S \Phi_{M_H}(1,e_{\pi_G}^H )=(-1)^{q(G)}(-1)^{\dim(A_{M_H}/A_H)}\ol{v}(M_H)^{-1} \left[ \Phi_{M_H}(1,\Theta_{\pi_H})+ \Phi_{M_H}(1,\Theta_{\pi_H'}) \right].
\end{equation*}

\subsection{The term $S\Phi_H(1,e_{\pi_G}^H)$}

In this case $H$ has the elliptic torus $T_H$.

From Proposition \ref{SpalPhiformula}, we obtain
$\Phi_H(1,\Theta_{\pi_H})=\dim_\C E_H$.  To apply the dimension formula, we compute for instance $\lip \alpha_1,\lambda_H \rip=a-1$, $\lip \alpha_2, \lambda_H \rip = b-1$, and $\lip \alpha_i,\rho_H \rip =1$.

We find that
\begin{equation*}
\Phi_H(1,\Theta^{E_H})=\Phi_H(1,\Theta^{E_H'})=ab.
\end{equation*}

Therefore
\begin{equation*}
S \Phi_H(1,e_{\pi_G}^H)=-2 \ol{v}(H)^{-1}ab.
\end{equation*}

\subsection{The term $S\Phi_{A^H}(1,e_{\pi_G}^H)$}

From Proposition \ref{SpalPhiformula}, we obtain
\begin{equation*}
\Phi_{A^H}(1,\Theta^{E_H})=\Phi_{A^H}(1,\Theta^{E_H'})=4.
\end{equation*}
Therefore
\begin{equation*}
S \Phi_{A^H}(1,e_{\pi_G}^H)=-8 \ol{v}(A^H)^{-1}.
\end{equation*}

\subsection{The terms $S\Phi_{M_H}(1,e_{\pi_G}^H)$ for the intermediate Levi subgroups}

 For both $M=M_H^1$ and $M=M_{H}^2$, we have $\Omega_G=\Omega_L \Omega_M$, and so formula (\ref{SpalPhiformula}) becomes simply $\Phi_{M_H}(1,\Theta^{E_H})=(-2)\dim_\C V_{\lambda_H}^{M_H}$ for both of these Levi subgroups.

We obtain
\begin{equation*}
\Phi_{M_H^1}(1,\Theta^{E_H})=  \Phi_{M_H^2}(1,\Theta^{E_H'})= -2a
\end{equation*}
and
\begin{equation*}
\Phi_{M_H^2}(1,\Theta^{E_H})=  \Phi_{M_H^1}(1,\Theta^{E_H'})= -2b.
\end{equation*}

Therefore
\begin{equation*}
S \Phi_{M_H^1}(1,e_{\pi_G}^H)=S \Phi_{M_H^2}(1,e_{\pi_G}^H)=-2\ol{v}(M_H^1)^{-1}(a+b).
\end{equation*}

\section{Final Form:  $\gm$ central} \label{Final Central}

Recall that $G=\GSp_4$.  For the convenience of the reader, we recall the set-up.

Let $a,b$ be odd integers with $a>b>0$, and $t$ an even integer.    Consider the Langlands parameter $\varphi_G: W_\R \to {}^L G$ given by
\begin{equation*}
\varphi_G(z)=|z|^t \left( \begin{array}{cccc}
\theta(z)^a &&& \\
&\theta(z)^b&& \\
&&\theta(z)^{-b}& \\
&&&\theta(z)^{-a} \\  \end{array} \right) \times z,
\end{equation*}
and $\varphi_G(\tau)=J \times \tau$.

Let $\pi_G$ be the discrete series representation $\pi(\varphi_G,B_T)$ of $G(\R)$ as in Section \ref{Langlands packets}.  Write $\pi_G'$ for the other representation in $\Pi(\varphi_G)$.

Put $f_{\infty}dg_{\infty}=e_{\pi_G}$ as in Section \ref{PC} for $\pi_G$ and any measure $f^\infty dg_f$ on $G(\Aff_f)$.  Let $fdg=e_{\pi_G} f^\infty  dg_f$, a measure on $G(\Aff)$.  By the theory of endoscopic transfer there is a matching measure $f^H dh$ on $H(\Aff)$, where $H$ is the elliptic endoscopic group $P(\GL_2 \times \GL_2)$ discussed above.

If $z \in A_G(\Q)$, then $\sum_M ST_g(fdg,z , M)$ is given by the product of $\lambda_0(z)=z^t$ with:
\begin{equation*}
-\frac{1}{24} ab(a+b)(a-b) \ol{v}(G)^{-1} f^\infty(z)+ \half (a-b)\ol{v}(M_1)^{-1}f^\infty_{M_1}(z)+ \half b \ol{v}(M_2)^{-1}f^\infty_{M_2}(z)+\half \ol{v}(A)^{-1} f^\infty_A(z).
\end{equation*}
If $z=(z_1,z_2) \in A_H(\Q)$, then $\sum_{M_H} ST_g(f^Hdh,z , M_H)$ is given by the product of $\lambda_0^H(z)=(z_1z_2)^t$ with:
\begin{equation*}
-4ab \ol{v}(H)^{-1}f^{H,\infty}(z)-2(a+b)\ol{v}(M_H^1)^{-1}f^\infty_{M_2}(z)-  2\ol{v}(A^H)^{-1}f^\infty_{A^H}(z).
\end{equation*}

\section{The case $\Gamma=\Sp_4(\Z)$} \label{Integral Points}

Let $f^\infty dg_f=e_{K_0}$, where $K_0=G(\OO_f)$.  Here $dg_f$ is an arbitrary Haar measure on $G(\Aff_f)$ so that $dg=dg_f dg_{\infty}$ is the Tamagawa measure on $G(\Aff)$.

\subsection{Central terms in $G$}
Note that $f^\infty_{M}(z)=0$ for all $z \in Z(\Q)$ unless $z= \pm 1$, and that $f_M^\infty(1)=f_M^\infty(-1)$ for all Levi subgroups $M$.

First we compute $ST_g(f dg,\pm 1, G)$.  We have
\begin{equation*}
\begin{split}
-\frac{1}{2^33} ab(a+b)(a-b) \ol{v}(G)^{-1} f^\infty(\pm 1) &= -\frac{1}{2^33} ab(a+b)(a-b) \tau(G)^{-1} d(G)^{-1} \chi_{K_0}(G) \\
 &= 2^{-10}3^{-3}5^{-1}ab(a+b)(a-b).
\end{split}
\end{equation*}

Next we treat the $\pm 1 \in M_i$ terms, for the intermediate Levi subgroups.
We have
\begin{equation*}
\begin{split}
ST_g(f dg,\pm 1, M_1) &=\half (a-b)\ol{v}(M_1)^{-1}f^\infty_{M_1}(\pm 1) \\
           &= -2^{-5}3^{-1}(a-b),
\end{split}
\end{equation*}
and
\begin{equation*}
\begin{split}
ST_g(f dg,\pm 1 \in M_2] &= \half b \ol{v}(M_2)^{-1}f^\infty_{M_2}(\pm 1) \\
&=-2^{-5}3^{-1}b.
\end{split}
\end{equation*}

Next we treat the $\pm 1 \in A$ terms.  We have $f_A(1)=\vol_{da_f}(K \cap A(\Aff_f))^{-1}$, which is $1$.  Moreover we take Lebesgue measure on $A(\R)$ so that $\ol{v}(A)=8$.  It follows that
\begin{equation*}
ST_g(f dg,\pm 1 , A)= \half \ol{v}(A)^{-1} f^\infty_A(\pm 1)=2^{-4}.
\end{equation*}

Doubling these terms to account for both central elements, we compute 

\begin{equation} \label{stablepart}
\sum_{z, M} ST_g(f dg,z , M)=2^{-9}3^{-3}5^{-1}ab(a+b)(a-b)-2^{-4}3^{-1}(a-b)-2^{-4}3^{-1}b+2^{-3}.
\end{equation}

\subsection{Central Terms in H}
By the Fundamental Lemma (See Hales \cite{Hales} and Weissauer \cite{Weissauer} for $\GSp_4$,  of course Ng\^{o} \cite{Ngo} in general), we may write $(e_{K_0})^H=e_{K_H}$, where $K_H=H(\OO_f)$.  Thus $(f^\infty)_M^H(z)=0$ for all $z \in H(\Q)$ unless $z=(1,\pm 1)$, and
\begin{equation*}
f_M^{H \infty}(1,1)=f_M^{H \infty}(1,-1)
\end{equation*}
for all Levi subgroups $M=M_H$ of $H$.

The only nontrivial factors in the formula of Theorem \ref{chi-theorem} are $|\ker \rho(\Q)|=2$, $[H(\R): H(\R)_+]=4$, and $\chi_{\alg}(H^{\simp}(\Z))$.  Note that $H^{\simp}=\SL_2 \times \SL_2$.

Therefore
\begin{equation*}
\begin{split}
\chi_{K_H}(H) &= 2^{-1} \chi_{\alg}(\SL_2(\Z))^2\\
                  &= 2^{-5}3^{-2}.
 \end{split}
 \end{equation*}

We conclude that

\begin{equation*}
\begin{split}
ST_g(f^Hdh,(1,\pm 1), H) &=-4ab \ol{v}(H)^{-1}\vol(K_H)^{-1} \\
&= -2^{-4} 3^{-2}ab.
\end{split}
\end{equation*}

Next we find that $\sum_{i=1}^2 ST_g(f^H dh,(1,\pm 1), M_i^H)$ is equal to
\begin{equation*}
\begin{split}
\sum_{i=1}^2 ST_g(f^H dh,(1,\pm 1), M_i^H) &=-2(a+b)\ol{v}(M_1^H)^{-1}\vol(K_M)^{-1} \\
&=2^{-3} 3^{-1}(a+b). 
\end{split}
\end{equation*}

Finally, we have

\begin{equation*}
\begin{split}
 ST_g(f^H dh,(1,\pm 1) , A^H) &= - 2 \ol{v}(A)^{-1}\vol (K_A)^{-1} \\
 &=-2^{-2}.
\end{split}
\end{equation*}

Multiplying by $\iota(G,H)=4^{-1}$, then doubling to account for both central elements, we compute 

\begin{equation} \label{labilepart}
\iota(G,H)\sum_{z,M_H} ST_g(f^H dh, z , M_H)= -2^{-5} 3^{-2}ab +2^{-4} 3^{-1}(a+b) -2^{-3}.
\end{equation}

\section{Comparison} \label{Comparison}

As mentioned in the introduction, Wakatsuki in \cite{Wak Mult}, \cite{Wak Dim} has used the Selberg Trace Formula, and Arthur's $L^2$-Lefschetz number formula to compute the discrete series multiplicities $m_{\disc}(\pi, \Gamma)$ for $\pi$ both holomorphic and large discrete series representations for $\Sp_4(\R)$, and for many cases of arithmetic subgroups $\Gamma$.   We will compare our formula to his when $\Gamma$ is the full modular group.  (Note that if $\pi$ is a discrete series representation of $\GSp_4(\R)$ with trivial central character, and $\pi_1$ is its restriction to $\Sp_4(\R)$, then $m_{\disc}(\pi,\Gamma)=m_{\disc}(\pi_1, \Gamma_1)$, where $\Gamma_1= \Sp_4(\Z)$.)  Since he is using the Selberg trace formula, his formula breaks into contributions from each conjugacy class in $\Gamma$.  In particular, he identifies the central-unipotent contributions $H_1^{\Hol}$ and $H_1^{\largess}$ to $m_{\disc}(\pi_G)$ and $m_{\disc}(\pi_G')$, respectively.  Namely, 

\begin{equation*}
H_{1}^{\Hol}=2^{-9}3^{-3}5^{-1}ab(a-b)(a+b)-2^{-5}3^{-2}ab+2^{-4}3^{-1}b
\end{equation*}
and
\begin{equation*}
H_{1}^{\largess}=2^{-9}3^{-3}5^{-1}ab(a-b)(a+b)+2^{-5}3^{-2}ab-2^{-3}3^{-1}b+2^{-2}.
\end{equation*}

(To translate from his notation to ours, use $j=b-1$ and $k= \half(a-b)+2$.)

Using these formulas and and our formulas above, we find

\begin{equation*}
H_{1}^{\Hol}= \sum_M ST_g(f dg, \pm 1, M)+\iota(G,H)\sum_{M_H} ST_g(f^H dh, (1, \pm 1),M_H)
\end{equation*}
when $fdg=e_{\pi_G} e_{K_0}$
and
\begin{equation*}
H_{1}^{\largess}= \sum_M ST_g(f dg, \pm 1, M)+\iota(G,H)\sum_{M_H} ST_g(f^H dh, (1, \pm 1),M_H).
\end{equation*}
when $fdg=e_{\pi_G'} e_{K_0}$.

This proves Theorem \ref{Theorem}.

\bibliographystyle{plain}

\end{document}